\def\cX{{\mathcal X}}
\def\cY{{\mathcal Y}}
\def\cZ{{\mathcal Z}}
\def\cS{{\mathcal S}}
\def\cT{{\mathcal T}}
\begin{document}

\title{ 
An accelerated semi-proximal ADMM with applications to multi-block sparse optimization problems\thanks{This work was funded by the National Key R \& D Program of China (No. 2021YFA001300), the
National Natural Science Foundation of China (No. 12271150), the Hunan Provincial Natural Science
Foundation of China (No. 2023JJ10001), the Science and Technology Innovation Program of Hunan
Province (No. 2022RC1190), and the Hunan Provincial Innovation Foundation for
Postgraduate (No. CX20220432).}}

\titlerunning{Accelerated Semi-Proximal ADMM}
\author{Peng Liu\and Liang Chen\and Minru Bai} 
\authorrunning{Peng Liu et al.} 

\institute{
Peng Liu\at  
School of Mathematics, Hunan University, Changsha 410082, China\\
\email{liupen@hnu.edu.cn} \and
Liang Chen\at
School of Mathematics, Hunan University, Changsha 410082, China\\
 \email{chl@hnu.edu.cn} \and
Minru Bai, Corresponding Author\at 
School of Mathematics, Hunan University, Changsha 410082, China\\
\email{minru-bai@hnu.edu.cn} 
}

\graphicspath{ {Images/} }
\date{Received: date / Accepted: date}
 
\maketitle

\begin{abstract}
As an extension of the alternating direction method of multipliers (ADMM), the semi-proximal ADMM (sPADMM) has been widely used in various fields due to its flexibility and robustness.  
In this paper, we first show that the two-block sPADMM algorithm can achieve an $O(1/\sqrt{K})$ non-ergodic convergence rate.
Then we propose an accelerated sPADMM (AsPADMM) algorithm by introducing extrapolation techniques and incrementing penalty parameters. 
The proposed AsPADMM algorithm is proven to converge globally to an optimal solution with a non-ergodic convergence rate of $O(1/K)$.
Furthermore, the AsPADMM can be extended and combined with the symmetric Gauss-Seidel decomposition to achieve an accelerated ADMM for multi-block problems.  
Finally, we apply the proposed AsPADMM to solving the multi-block subproblems in difference-of-convex algorithms for robust low-rank tensor completion problems and mixed sparse optimization problems.
The numerical results suggest that the acceleration techniques bring about a notable improvement in the convergence speed.

\keywords{
ADMM \and 
Multi-block ADMM \and
Acceleration \and
Robust low-rank tensor completion \and
Mixed sparse optimization}
\subclass{90C25  \and 68Q25 \and 65K05  }
\end{abstract}

\section{Introduction}
Consider the two-block convex optimization problem with equality constraints in the following form
\begin{equation}
\label{problem 1}
\min_{x\in\cX,y\in \cY} \{f(x)+g(y)\mid Ax+By=c\},
\end{equation}
where $\cX$, $\cY$ and $\cZ$ are finite-dimensional real Hilbert space each endowed with an inner product $\langle \cdot,\cdot\rangle$ and its induced norm $\|\cdot\|$, 
$f:\cX\rightarrow (-\infty,\infty ]$ and 
$g:\cY\rightarrow (-\infty,\infty ]$ are closed proper convex functions,
$A: \cX \to\cZ$ and $B:\cY\to\cZ$ are linear operators, 
and $c\in\cZ$ is a given vector. 
The alternating direction method of multipliers (ADMM), originally proposed by Glowinski and Marroco \cite{glowinski1975approximation} and Gabay and Mercier \cite{gabay1976dual}, stands out as one of the most widely used methods for solving \eqref{problem 1}, especially in distributed optimization and statistical learning \cite{boyd2011distributed,chang2014multi}. 

Given $\lambda >0$ as the penalty parameter, the augmented  Lagrangian function of \eqref{problem 1} is defined by 
\begin{align*}
\mathcal{L}_\lambda(x,y,z):=f(x)+g(y)-\langle Ax+By-c,z\rangle+\frac{\lambda}{2}\|Ax+By-c\|^{2},\qquad &\\
 x\in\cX,\, y\in\cY,\, z\in\cZ.&
\end{align*}
The ADMM minimizes $\mathcal{L}_\lambda(x,y,z)$ with respect to $x$ and $y$ successively, and then updates the dual variable in the same way as the method of multipliers \cite{hestenes1969multiplier,rockafellar1976monotone}. 
As illustrated by a counterexample in \cite{chen2017note}, the subproblems in ADMM may not be solvable.
Therefore, the well-definedness of ADMM should not be taken as guaranteed, and a natural remedy to this issue is to add proximal terms to the subproblems.
Initially, positive definite proximal terms were considered \cite{eckstein1994some,he2002new,lin2011linearized,yang2013linearized}. 
A much more practical variant of proximal ADMM was developed in \cite[Appendix B]{fazel2013hankel}, in which positive semi-definite proximal terms are sufficient, which provides crucial flexibility for choosing the proximal terms,  especially when combined with the symmetric Gauss-Seidel (sGS) decomposition technique \cite{li2019block} to solve multi-block problems \cite{li2016schur,chen2017efficient,chen2018unified,chen2021}.

Even if the subproblems are well defined, the classic ADMM  (with the unit dual step length) 
can only achieve an $O(1/K)$ ergodic (by measuring the average of the iterations) convergence rate \cite{monteiro2013iteration}, and an $O(1/\sqrt{K})$ non-ergodic convergence rate on primal feasibility violations and the gap of the primal objective value  \cite{davis2016convergence}.
Moreover, \cite{Cui2016ADMM} demonstrated that a majorized ADMM, including the classical ADMM, exhibits a non-ergodic $O(1/\sqrt{K})$ convergence rate concerning the Karush-Kuhn-Tucker (KKT) optimality condition.
To accelerate ADMM by extrapolation, \cite{goldstein2014fast} realized the Nesterov acceleration method when the objective functions are strongly convex. 
Furthermore, \cite{ouyang2015accelerated} obtained an ergodic convergence rate superior to $O(1/K)$ for linearized ADMM (including ADMM) by assuming the Lipschitz continuous differentiability of a function in the objective. 
For the classic ADMM with unit step length, \cite{xu2017accelerated} achieved an ergodic convergence rate of $O(1/K^{2})$ by properly tuning the penalty parameter when one of the objective functions is strongly convex.  
In addition, \cite{deng2017parallel} obtained a convergence rate of  $o(1/\sqrt{K})$  for the weighted distance between successive iterations. 
More recently, \cite[Theorem 3.3]{zhang2023lagrangian} extended the result of \cite{xu2017accelerated} by allowing the step length in $(0, (1+\sqrt{5})/2]$.

Theoretically, the ergodic convergence rates for the accelerated variants of the ADMM are satisfactory.
From a practical perspective, ergodic averaging might compromise the properties of sparseness and low rank in sparse and low-rank optimization problems.
Specifically, in image processing and low-rank learning, it is typically crucial to maintain the low-rankness and sparseness of iterative sequences. 
For example, if objective functions are designed to promote sparsity, the tail of the iteration sequence exhibits sparsity, but the average is likely to be dense.
To overcome this limitation, accelerating the non-ergodic convergence rate of ADMM has been considered. 

\cite{li2019accelerated} accelerated ADMM and obtained a non-ergodic $O(1/K)$  convergence rate in terms of function values and feasibility violations, when the dual step length is restricted in $(0.5,1)$.
It was shown in \cite[Section 4]{li2019accelerated} that such a non-ergodic convergence rate is tight, provided that the algorithm is well defined.
For proximal ADMM with the proximal terms for one variable being positive definite, 
\cite{sabach2022faster}
introduced an accelerated proximal ADMM with a non-ergodic convergence rate of $O(1/K)$ in terms of function values and feasibility violations. 
Moreover, by reformulating ADMM as a fixed-point iterative method,
\cite{kim1905accelerated} obtained an accelerated ADMM that possesses a non-ergodic $O(1/K)$ convergence rate regarding primal feasibility violations.
Very recently, based on the findings in \cite{zhang2022efficient} and \cite{yang2025accelerated},
\cite{sun2024accelerating} proposed an accelerated semi-proximal ADMM (with asymptotic $o(1/k)$ and non-asymptotic $O(1/k)$ convergence rates concerning the KKT residual and the primal objective function value gap) by establishing the equivalence between the preconditioned ADMM and the degenerate proximal point algorithm \cite{bonnans95,li2020}, and proposing an accelerated degenerate proximal point algorithm using the Halpern iteration and the fast Krasnosel'ski{\u{\i}}-Mann iteration. 
Subsequently, a linear programming solver based on \cite{sun2024accelerating} was developed in \cite{chen2024HPR}, and one may refer to \cite[Section 1]{sun2024accelerating} for a comprehensive introduction to the accelerated variants of ADMM. 
In particular, similar to \cite[Appendix B]{fazel2013hankel}, the proximal terms in \cite{sun2024accelerating} are allowed to be positive semi-definite. This, together with the excellent effect of the method in \cite{li2019accelerated} for sparse optimization, motivates us to consider accelerating the sPADMM using Nesterov’s extrapolation as \cite{li2019accelerated}.

The results in \cite{li2019accelerated} are quite appealing, especially the non-ergodic accelerated convergence rate of the following Algorithm \ref{algg1}, which is favorable for sparse and low-rank optimization problems. 
However, the subproblems of Algorithm \ref{algg1}, similar to the classic ADMM, may not be solvable \cite{chen2017note}. 
Therefore, to improve the robustness of Algorithm \ref{algg1} and make it more applicable to practical problems, it is natural to consider adding proximal terms to the subproblems. 
Consequently, how to properly add proximal terms to Algorithm \ref{algg1} to ensure that the subproblems are solvable while maintaining the accelerated ${O}(1/K)$ non-ergodic convergence rate comes to an important problem.

\begin{algorithm} 
\caption{Accelerated ADMM  for solving problem \eqref{problem 1}}
\label{algg1}
\KwIn{
$y^{-1}=y^{0}\in\cY$, $z^{0}\in\cZ$, $\lambda>0$, $\tau \in (0.5,1)$,  and $\theta^{-1}=1/\tau$.}
\KwOut{ $\{(x^k,y^k,z^k)\}$.} 
\For{\textnormal{$k=0,1, \ldots$,}}{
1. $\theta^{k}:=\frac{1}{1-\tau+1/\theta^{k-1}}$
\ $v^{k}:=y^{k}+\frac{\theta^{k}
(1-\theta^{k-1})}{\theta^{k-1}}(y^{k}-y^{k-1})$;
\\[1mm]
2. $x^{k+1}\in \underset{x}{\arg\min}~\mathcal{L}_{\frac{\lambda}{\theta^{k}}}(x,v^{k},z^{k});$
\\[1mm]
3. $y^{k+1}\in \underset{y}{\arg\min}~\mathcal{L}_{\frac{\lambda}{\theta^{k}}}(x^{k+1},y,z^{k})$;
\\[1mm]
4. $z^{k+1}:=z^{k}-\tau\lambda(Ax^{k+1}+By^{k+1}-c)$.
}
\end{algorithm}

In this paper, to address the problem mentioned above, we propose an accelerated semi-proximal alternating direction method of multipliers (AsPADMM), involving extrapolation techniques and increasing penalty parameters as Algorithm \ref{algg1}, and prove its non-ergodic convergence rate of $O(1/K)$. 
 Unlike Algorithm \ref{algg1},  AsPADMM can ensure that subproblems are always well defined by adding suitable proximal terms. 
Furthermore, the proposed AsPADMM with variable proximal terms, together with sGS decomposition techniques \cite{li2019block}, can properly solve multi-block non-smooth convex problems that frequently occur in image processing and low-rank learning.

The resulting algorithm, referred to as sGS-AsPADMM, also admits an $O(1/K)$ non-ergodic convergence rate. 

Finally, we apply the proposed sGS-AsPADMM to solving the multi-block subproblems in difference-of-convex (DC) algorithms for robust low-rank tensor completion problems and mixed sparse optimization problems, 
which come from image processing and low-rank learning. 

The numerical results validate the effectiveness of the acceleration in the sGS-AsPADMM.

The remaining parts of this paper are organized as follows. 
In Section \ref{sect:preliminary}, we provide the basic definitions and some preliminary results. 
In Section \ref{sect 2}, we analyze the $O(1/\sqrt{K})$ non-ergodic convergence rate of the sPADMM. 
In Section \ref{sect 3}, we propose the accelerated sPADMM algorithm and derive its non-ergodic $O(1/K)$  convergence rate. 
In Section \ref{sect 4}, we extend the proposed accelerated sPADMM algorithm to multi-block problems. 
In Section \ref{sect 5}, we provide the applications of the proposed algorithms to different applications, including the subproblems of non-convex robust low-rank tensor completion problems and the subproblems of non-convex mixed sparse optimization problems. 
We conclude the paper in Section \ref{sect 6}.

\section{Preliminaries}
\label{sect:preliminary}
This section gives some definitions and lemmas used throughout this paper. 
Let $\mathcal{S}:\cX\to\cX$ be any self-adjoint positive semi-definite linear operator. 
For any $x\in \cX$, define $\|x\|_{\mathcal{S}}:=\sqrt{\langle x,\mathcal{S}x\rangle}$. 
For any convex set $C\subseteq\cX$, its relative interior \cite[Section 2]{rockafellar1970convex} is denoted by ${\rm ri}\, C$, and we use
$\delta_{C}(\cdot)$ to denote the corresponding indicator function.
Moreover, for a convex function $f:\cX\to(-\infty, \infty]$, we denote its sub-differential mapping \cite[Section 23]{rockafellar1970convex} as $\partial f(\cdot)$ and use ${\rm dom}(f)$ to represent its domain.

Note that for the convex functions $f$ and $g$ in problem \eqref{problem 1}, there exist two self-adjoint and positive semi-definite linear operators $\Sigma_{f}:\cX\to \cX$ and $\Sigma_{g}:\cY\to\cY$ such that for all $x,~\hat{x}\in \text{dom}(f)$, $w\in \partial f(x)$ and $\hat{w}\in \partial f(\hat{x})$,
\begin{align}\label{budengshi1}
f(x)\geq f(\hat{x})+\langle \hat{w},x-\hat{x}\rangle+\frac{1}{2}\|x-
\hat{x}\|_{\Sigma_{f}}^{2}~~\text{and}~~\langle w-\hat{w},x-\hat{x}
\rangle \geq \|x-\hat{x}\|^{2}_{\Sigma_{f}},
\end{align}
and for all $y,~\hat{y}\in dom(g)$, $v\in \partial g(y)$ and $\hat{v}\in \partial g(\hat{y})$,
\begin{align}\label{budengshi2}
g(y)\geq g(\hat{y})+\langle \hat{v},y-\hat{y}\rangle+\frac{1}{2}\|y-
\hat{y}\|_{\Sigma_{g}}^{2}~~\text{and}~~\langle v-\hat{v},y-\hat{y}
\rangle \geq \|y-\hat{y}\|^{2}_{\Sigma_{g}}.
\end{align}
Moreover, 
the KKT system of problem \eqref{problem 1} is given by
\begin{equation}\label{kkt}
    Ax+By=c, \quad 0\in \partial f(x)-A^{*}z\quad \text{and} \quad 0\in \partial g(y)-B^{*}z.
\end{equation}
If $(x^{*},y^{*},z^{*})\in\cX\times \cY\times \cZ$ satisfies \eqref{kkt}, from \cite[Corollary 30.5.1]{rockafellar1970convex} we know that $(x^{*},y^{*})$ is an
optimal solution to the problem \eqref{problem 1} and $z^{*}$ is an optimal solution to the dual of this problem.
In this case, it is easy to see from \eqref{kkt} that
\begin{equation}
\label{opteq}
f(x)+g(y)\ge f(x^*)+g(y^*)+\langle z^*, Ax+B y-c\rangle. 
\end{equation}

Finally, we prepare the following lemma for our discussion. 
\begin{lemma}\label{lemma 2} \cite[Lemma 2]{li2019accelerated}
Given the sequence $\{\nu^{k}\}\subset\cZ$ satisfying 
$$
\|(1+K(1-\tau))\nu^{K+1}+\tau\sum_{k=1}^{K}\nu^{k}\|\leq \tau M, \quad \forall K=0,1,2...,
$$
where $\tau\in(0,1)$. 
Then $\|\sum_{k=1}^{K}\nu^{k}\|\leq M$  for all $ K=1,2,\cdots$.
\end{lemma}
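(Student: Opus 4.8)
The plan is to prove the bound by induction on $K$, after reformulating the hypothesis in terms of the partial sums $S_K := \sum_{k=1}^{K}\nu^{k}$ (with the convention $S_0 := 0$), so that the desired conclusion becomes exactly $\|S_K\| \le M$. Writing $\nu^{K+1} = S_{K+1} - S_K$, the quantity appearing inside the norm in the hypothesis becomes
$$(1+K(1-\tau))\nu^{K+1}+\tau S_{K} = (1+K(1-\tau))\,S_{K+1} - \big(1+K(1-\tau)-\tau\big)\,S_{K}.$$
The first thing I would check is the algebraic identity $1+K(1-\tau)-\tau = (1-\tau)(K+1)$, so that the coefficient of $S_K$ collapses to a clean form; setting $a_K := 1+K(1-\tau) > 0$, the hypothesis then reads $\|a_K S_{K+1} - (1-\tau)(K+1) S_K\| \le \tau M$.

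For the base case $K=0$ one has $a_0 = 1$ and $S_0 = 0$, so the hypothesis directly yields $\|S_1\| = \|\nu^1\| \le \tau M \le M$, using $\tau < 1$. For the inductive step, assuming $\|S_K\| \le M$, I would apply the triangle inequality to isolate $a_K S_{K+1}$:
$$a_K\|S_{K+1}\| \le \|a_K S_{K+1} - (1-\tau)(K+1)S_K\| + (1-\tau)(K+1)\|S_K\| \le \tau M + (1-\tau)(K+1)M.$$
The crux is then the second elementary check, namely $\tau + (1-\tau)(K+1) = 1 + K(1-\tau) = a_K$, so that the right-hand side equals $a_K M$. Dividing by $a_K > 0$ gives $\|S_{K+1}\| \le M$, which closes the induction.

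I do not expect any serious obstacle: the whole argument reduces to the two coefficient identities highlighted above, and the positivity $a_K > 0$ (which holds precisely because $\tau \in (0,1)$) is what makes the final division legitimate. The only points requiring a little care are bookkeeping the convention $S_0 = 0$ so that the $K=0$ instance of the hypothesis produces the base case, and matching indices correctly, since the conclusion is stated for $K \ge 1$ while the hypothesis is indexed from $K=0$.
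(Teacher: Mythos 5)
Your proof is correct: the two coefficient identities you isolate, $1+K(1-\tau)-\tau=(1-\tau)(K+1)$ and $\tau+(1-\tau)(K+1)=1+K(1-\tau)$, both check out, the positivity $a_K=1+K(1-\tau)>0$ justifies the final division, and the convention $S_0=0$ makes the $K=0$ instance of the hypothesis deliver the base case $\|S_1\|\le\tau M\le M$. Note that the paper itself gives no proof of this lemma---it is imported directly as \cite[Lemma 2]{li2019accelerated}---and your partial-sum induction via the triangle inequality is essentially the standard argument from that reference, so your write-up would serve as a valid self-contained replacement for the citation.
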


\section{Convergence rate of semi-proximal ADMM}\label{sect 2}

In this section, we briefly introduce the sPADMM algorithm (given as Algorithm \ref{alg1}) proposed in \cite[Appendix B]{fazel2013hankel} and its convergence. Then analyze its convergence rate. 

\begin{algorithm}
\caption{sPADMM algorithm for solving \eqref{problem 1}}\label{alg1}
\KwIn{$x^{0}\in\cX$, $y^{0}\in\cY$, $z^{0}\in\cZ$, $\lambda>0$, $\tau>0$. Choose self-adjoint linear operators $\mathcal{S}:\cX \to\cX$ and $\mathcal{T}:\cY \to\cY$ such that
$\mathcal{S}\succeq 0$, $\mathcal{T} \succeq 0$, $\Sigma_{f}+\mathcal{S}+\lambda A^{*}A\succ 0$, and $ \Sigma_{g}+\mathcal{T}+\lambda B^{*}B\succ 0$.}
\KwOut{$\{(x^k,y^k,z^k)\}$.} 
\For{$k=0,1, \ldots$,}{
1. $ x^{k+1}:=\underset{x}{\arg\min}~\mathcal{L}_{\lambda}(x,y^{k},z^{k})+\frac
{1}{2}\|x-x^{k}\|^{2}
_{\mathcal{S}}$;\\
2. $ y^{k+1}:=\underset{y}{\arg\min}~\mathcal{L}_{\lambda}(x^{k+1},y,z^{k})
+\frac{1}{2}\|y-y^{k}\|^{2}
_{\mathcal{T}}$;\\
3. $ z^{k+1}:=z^{k}-\tau\lambda(Ax^{k+1}+By^{k+1}-c)$.}
\end{algorithm}

Note that Algorithm \ref{alg1} includes the classical ADMM \cite{glowinski1975approximation,gabay1976dual} as a special case by taking $\mathcal{S}=0$ and $\mathcal{T}=0$.
It also covers the proximal ADMM in \cite{eckstein1994some} in which $\cS$ and $\cT$ are the identity operators. 
According to \cite[Theorem B.1]{fazel2013hankel}, when the solution set of \eqref{problem 1} is not empty and there exists $(x,y) \in {\rm ri}({\rm dom}(f)\times {\rm dom} (g))\cap P$, where ${\rm P}$ is the constraint set of \eqref{problem 1}, 
the sequence $\{(x^{k},y^{k})\}$ converges to an optimal solution of \eqref{problem 1}, and $\{z^{k}\}$ converges to an optimal solution to the dual problem of \eqref{problem 1} if $\tau\in(0,\frac{1+\sqrt{5}}{2})$.
In fact, the first two conditions can be replaced by the condition that the solution set of the KKT system \eqref{kkt} is non-empty \cite[Corollaries 28.2.2]{rockafellar1970convex}.

To see if Algorithm \ref{alg1} can be accelerated using the extrapolation techniques in \cite{han2018linear}, we should focus on its non-ergodic convergence rate. 
To this end, \cite{Cui2016ADMM} demonstrated that a majorized ADMM, including the classical ADMM, exhibits a non-ergodic $O(1/\sqrt{K})$ convergence rate with respect to the optimality condition of KKT. 
Note that if $(x^{*},y^{*},z^{*})$ is a solution to the KKT system \eqref{kkt} of the problem \eqref{problem 1}, one has $Ax^{*}+By^{*}=c$ and 
$$
f(x)+g(y)-f(x^{*})-g(y^{*})+\langle z^{*},c-Ax-By \rangle \geq 0\quad   \forall x\in \cX, \, y\in\cY.
$$
Based on this observation, \cite[Theorems 13(2) \& 15(1)]{davis2016convergence} characterize the convergence rate based on the primal feasibility and the gap from the objective value of iterations to the optimal value. Using the same strategy to characterize the convergence rate, we give the following result, an extension of the result in \cite{davis2016convergence} and the corresponding proof is given in Appendix \ref{appendix1}.

\begin{theorem}\label{thm1}
Assume that the solution set of the KKT system \eqref{kkt} is non-empty. 
Let $\{(x^{k},y^{k},z^{k})\}$ be the sequence generated by Algorithm \ref{alg1} with $\tau\in(0,1]$. Assume that $\{(x^{*},y^{*})\}$ is an optimal solution of the problem \eqref{problem 1}, and $\{z^{*}\}$ is an optimal solution to the dual problem of problem \eqref{problem 1}. 
Then, for all $K= 1,2,3\ldots$, one has
$$
\begin{cases}
\begin{array}{r}
-\|z^{*}\|\sqrt{\frac{C}{\tau\lambda K}}
\leq f(x^{K+1})+g(y^{K+1})-f(x^{*})-g(y^{*})
\qquad~
\\ 
\leq \|z^{*}\|\sqrt{\frac{C}{\tau\lambda K}}+\frac{4C}{\sqrt{K}}+\frac{C}{K\sqrt{\tau}},
\end{array}
\\[2mm]
0\leq\|Ax^{K+1}+By^{K+1}-c\|
=\frac{1}{\tau\lambda}\|z^{K+1}-z^{K}\|\leq \sqrt{\frac{ C}{\tau\lambda K}}, 
\end{cases}
$$
where $C=\max\{C_{1},C_{2}\}$ is a nonnegative constant with 
$$
C_{1}:=\frac{1}{\lambda}\|z^{1}-z^{*}\|^{2}+\lambda\|B(y^{1}-y^{*})\|^{2}+
\|x^{1}-x^{*}\|_{\mathcal{S}}^{2}+(
\|y^{1}-y^{*}\|_{\mathcal{T}}^{2}+
\|y^{0}-y^{1}\|_{\mathcal{T}}^{2}),$$  
and
$C_{2}:= \frac{1}{m}
\left(\frac{1}{\tau\lambda}\|z^{1}-z^{*}\|^{2}+\lambda\|B(y^{1}-y^{*})\|^{2}+
\|x^{1}-x^{*}\|_{\mathcal{S}}^{2}+
\|y^{1}-y^{*}\|_{\mathcal{T}}^{2}
\right).
$
\end{theorem}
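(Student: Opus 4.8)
The plan is to extend the non-ergodic analysis of Davis and Yin \cite[Theorems 13(2) \& 15(1)]{davis2016convergence} to the semi-proximal setting, so the argument will rest on a Lyapunov (energy) function together with a monotonicity property of the feasibility residual. First I would record the first-order optimality conditions of the two subproblems of Algorithm \ref{alg1}. Since Step 3 gives $Ax^{k+1}+By^{k+1}-c=\tfrac{1}{\tau\lambda}(z^{k}-z^{k+1})$ --- which already yields the stated identity $\|Ax^{K+1}+By^{K+1}-c\|=\tfrac{1}{\tau\lambda}\|z^{K+1}-z^{K}\|$ --- the conditions can be written as
\begin{align*}
w^{k+1}&:=A^{*}\bigl(z^{k}-\lambda(Ax^{k+1}+By^{k}-c)\bigr)-\mathcal{S}(x^{k+1}-x^{k})\in\partial f(x^{k+1}),\\
v^{k+1}&:=B^{*}\bigl(z^{k}-\lambda(Ax^{k+1}+By^{k+1}-c)\bigr)-\mathcal{T}(y^{k+1}-y^{k})\in\partial g(y^{k+1}).
\end{align*}
These subgradients are exactly what I will feed into the convexity inequalities \eqref{budengshi1}--\eqref{budengshi2} and the optimality bound \eqref{opteq}.

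Next I would build the Lyapunov function whose shape is dictated by the constant $C_{1}$, namely
$$
\Phi_{k}:=\tfrac{1}{\tau\lambda}\|z^{k}-z^{*}\|^{2}+\lambda\|B(y^{k}-y^{*})\|^{2}+\|x^{k}-x^{*}\|_{\mathcal{S}}^{2}+\|y^{k}-y^{*}\|_{\mathcal{T}}^{2},
$$
possibly augmented by an extra $\|y^{k}-y^{k-1}\|_{\mathcal{T}}^{2}$ term, consistent with the $\|y^{0}-y^{1}\|_{\mathcal{T}}^{2}$ appearing in $C_{1}$. Substituting $(x^{*},y^{*},z^{*})$ into the two optimality conditions, subtracting, pairing the results with $x^{k+1}-x^{*}$ and $y^{k+1}-y^{*}$, and invoking the monotonicity inequalities in \eqref{budengshi1}--\eqref{budengshi2}, I would derive a one-step descent inequality $\Phi_{k+1}\le\Phi_{k}-R_{k}$, in which $R_{k}$ collects nonnegative residual terms including a positive multiple of $\|z^{k+1}-z^{k}\|^{2}$ (the multiple being positive precisely because $\tau\in(0,1]$), together with $\|x^{k+1}-x^{k}\|_{\mathcal{S}}^{2}$ and $\|y^{k+1}-y^{k}\|_{\mathcal{T}}^{2}$. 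Telescoping from $k=1$ to $K$ then gives the summability bound $\sum_{k=1}^{K}R_{k}\le\Phi_{1}$. The constant $C_{2}=\tfrac{1}{m}(\cdots)$, with $m>0$ a coercivity-type constant emerging from the objective-gap estimate, arises from the second part of the analysis, and $C=\max\{C_{1},C_{2}\}$ merges the two.

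With summability in hand, the non-ergodic rate requires upgrading a best-so-far bound to a last-iterate bound, which is the crux of the argument. For this I would prove that the feasibility residual $\|z^{k+1}-z^{k}\|$ is monotonically non-increasing, by subtracting the optimality conditions at consecutive iterates, pairing with $z^{k+1}-z^{k}$ and $y^{k+1}-y^{k}$, and again using the monotonicity of $\partial f,\partial g$ together with $\mathcal{S},\mathcal{T}\succeq0$. Monotonicity combined with $\sum_{k=1}^{K}R_{k}\le C$ gives a bound of the form $\|z^{K+1}-z^{K}\|^{2}\le C'/K$, which after tracking the $\tau\lambda$ factors yields $\|Ax^{K+1}+By^{K+1}-c\|\le\sqrt{C/(\tau\lambda K)}$. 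The lower bound on the objective gap is then immediate from \eqref{opteq} and Cauchy--Schwarz, since $f(x^{K+1})+g(y^{K+1})-f(x^{*})-g(y^{*})\ge\langle z^{*},Ax^{K+1}+By^{K+1}-c\rangle\ge-\|z^{*}\|\sqrt{C/(\tau\lambda K)}$. For the upper bound I would use convexity, $f(x^{K+1})-f(x^{*})\le\langle w^{K+1},x^{K+1}-x^{*}\rangle$ and likewise for $g$, substitute the expressions for $w^{k+1},v^{k+1}$ above, and bound the resulting inner products by Cauchy--Schwarz using the feasibility estimate, the boundedness of $\Phi_{k}$, and the summed residuals; the correction terms $\tfrac{4C}{\sqrt{K}}$ and $\tfrac{C}{K\sqrt{\tau}}$ should emerge precisely from the $\mathcal{S}$- and $\mathcal{T}$-weighted proximal contributions and the $z$-residual. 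I expect the two hardest steps to be (i) establishing the monotonicity of $\|z^{k+1}-z^{k}\|$, since the merely positive semi-definite operators $\mathcal{S},\mathcal{T}$ weaken the available coercivity relative to the classical ADMM, and (ii) the bookkeeping in the upper-bound estimate, ensuring every term is controlled by $\sqrt{C}$ with the correct powers of $K$ and $\tau$.
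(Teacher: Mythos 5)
Your proposal is correct and follows essentially the same route as the paper's proof in Appendix \ref{appendix1}: the same subgradient identities from the two subproblems, the same Lyapunov function dictated by $C_1$ and $C_2$, a one-step descent inequality obtained by plugging $(x,y)=(x^*,y^*)$ into the convexity estimates, summation combined with a monotonicity lemma for the successive-difference residual to upgrade to a last-iterate bound, and the same Cauchy--Schwarz bookkeeping using \eqref{opteq} for the objective-gap bounds. Two refinements you would discover in execution: the quantity that is (and can be) shown non-increasing is the weighted sum $\frac{1}{\tau\lambda}\|z^{k+1}-z^{k}\|^{2}+\lambda\|B(y^{k+1}-y^{k})\|^{2}+\|x^{k+1}-x^{k}\|_{\mathcal{S}}^{2}+\|y^{k+1}-y^{k}\|_{\mathcal{T}}^{2}$ as in \eqref{dandiao}, not $\|z^{k+1}-z^{k}\|$ alone (which your own recipe of pairing consecutive optimality conditions with \emph{all} variable differences in fact yields, and which suffices since the weighted sum dominates the $z$-residual); and the paper must split $\tau=1$ (giving $C_1$, with the extra $\|y^{0}-y^{1}\|_{\mathcal{T}}^{2}$ telescope you anticipated) from $\tau\in(0,1)$ (giving $C_2$ with the explicit constant $m=2\tau\lambda t-\frac{\tau-2}{\tau}$, $t=\frac{\tau-1-\sqrt{\tau^{2}-\tau+1}}{2\tau^{2}\lambda}$, which degenerates to $m=0$ at $\tau=1$), a dichotomy that your single descent inequality with a positive multiple for all $\tau\in(0,1]$ glosses over but does not invalidate.
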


\begin{remark}
Theorem \ref{thm1} implies that the sequence $\{f(x^{k})+ g(y^{k})\}$ converges to the optimal value of problem \eqref{problem 1} with
the non-ergodic convergence rate $O(1/ \sqrt{K})$.
We should mention that when some error bound conditions are imposed on the KKT system \eqref{kkt} of the problem \eqref{problem 1}, one can further obtain a linear convergence rate of Algorithm \ref{alg1} as in \cite{han2018linear}. 
Moreover, since the $O(1/\sqrt{K})$ non-ergodic convergence rate for the classical ADMM, a special case of sPADMM, was shown to be optimal \cite{davis2016convergence,liu2019linearized}, 
the $O(1/\sqrt{K})$ non-ergodic convergence rate of Algorithm \ref{alg1} for solving problem \eqref{problem 1} in Theorem \ref{thm1} is also optimal.
\end{remark}

\section{An accelerated semi-proximal ADMM}\label{sect 3}
Based on introducing extrapolation techniques and increasing the penalty parameter to Algorithm \ref{alg1}, we
propose the following accelerated semi-proximal ADMM (AsPADMM) algorithm to solve problem \eqref{problem 1}. 
\begin{algorithm}
\caption{AsPADMM algorithm for solving problem \eqref{problem 1}}
\label{alg2}
\KwIn{
$x^{0}\in\cX$, $y^{0}\in\cY$, $z^{0}\in\cZ$, $y^{-1}=y^0$,
$\lambda>0$, $\tau\in(0,1)$, and $\theta^{-1}=1/\tau$. Choose self-adjoint positive semidefinite linear operators
$\mathcal{S}:\cX \to\cX$ and $\mathcal{T}:\cY \to\cY$ such that $\Sigma_{f}+\mathcal{S}+\lambda A^{*}A\succ 0$, and $ \Sigma_{g}+\mathcal{T}+\lambda B^{*}B\succ 0$.}
\KwOut{ $\{(x^k,y^k,z^k)\}$.} 
\For{$k=0,1, \ldots$,}{
1. $\theta^{k}:=\frac{1}{1-\tau+1/\theta^{k-1}}$,\ 
$ v^{k}:=y^{k}+\frac{\theta^{k}(1-\theta^{k-1})}{\theta^{k-1}}(y^{k}-y^{k-1})$;\\
2. $ x^{k+1}:=\underset{x}{\arg\min}~\mathcal{L}_\frac{\lambda}{\theta^{k}}(x,v^{k},z^{k})+\frac{1}{2}\|x-x^{k}\|^{2}_{\mathcal{S}}$;\\
3. $ y^{k+1}:=\underset{y}{\arg\min}~\mathcal{L}_\frac{\lambda}{\theta^{k}}(x^{k+1},y,z^{k})+\frac{1}{2}\|y-y^{k}\|^{2}_{\mathcal{T}}$;\\
4.   $ z^{k+1}:=z^{k}-\tau\lambda(Ax^{k+1}+By^{k+1}-c)$.}
\end{algorithm}

In order to generalize Algorithm \ref{alg2} to solve multi-block problems using the sGS decomposition techniques, we propose the following Algorithm \ref{algst}, which takes Algorithm \ref{alg2} as its special case. 

\begin{algorithm} 
\caption{AsPADMM algorithm with variable proximal terms}
\label{algst}
\KwIn{
$x^{0}\in\cX$, $y^{0}\in\cY$, $z^{0}\in\cZ$, $y^{-1}=y^0$,
$\lambda>0$, $\tau\in(0,1)$, and $\theta^{-1}=1/\tau$. 
Choose self-adjoint positive semidefinite linear operators
$\mathcal{S}^{k}:\cX \to\cX$ and 
$\mathcal{T}^{k}:\cY \to\cY$. 
}
\KwOut{ $\{(x^k,y^k,z^k)\}$.} 
\For{$k=0,1, \ldots$,}{
1. $\theta^{k}:=\frac{1}{1-\tau+1/\theta^{k-1}}$,\ 
$ v^{k}:=y^{k}+\frac{\theta^{k}(1-\theta^{k-1})}{\theta^{k-1}}(y^{k}-y^{k-1})$;\\
2. $ x^{k+1}:=\underset{x}{\arg\min}~\mathcal{L}_\frac{\lambda}{\theta^{k}}(x,v^{k},z^{k})+\frac{1}{2}\|x-x^{k}\|^{2}_{\mathcal{S}^{k}}$;\\
3. $ y^{k+1}:=\underset{y}{\arg\min}~\mathcal{L}_\frac{\lambda}{\theta^{k}}(x^{k+1},y,z^{k})+\frac{1}{2}\|y-y^{k}\|^{2}_{\mathcal{T}^{k}}$;\\
4.   $ z^{k+1}:=z^{k}-\tau\lambda(Ax^{k+1}+By^{k+1}-c)$.}
\end{algorithm}

For Algorithm \ref{algst}, we have the following result regarding its convergence and its $O(1/{K})$ non-ergodic convergence rate in terms of the objective value and the violation of the feasibility.  
\begin{theorem}
\label{thmst}
Let $(x^{*},y^{*},z^*)$ be a solution to the KKT system \eqref{kkt} of problem \eqref{problem 1}, and let $\{(x^{k},y^{k},z^{k})\}$ be the sequence generated by Algorithm \ref{algst} 
with $\Sigma_{f}+A^{*}A+\mathcal{S}^{k}\succ 0$ and $\Sigma_{g}+B^*  B+\mathcal{T}^{k}\succ 0$ for all $k\ge 0$. 
Suppose that $\Sigma_{f}\succeq \cS^{k+1}-\cS^{k}$ and $\Sigma_{g}\succeq \cT^{k+1}-\cT^{k}$ for all $k\ge 0$.
Then, for all $K= 0,1,2\ldots$, one has
\begin{subequations}
\begin{equation}
-\frac{2C_{3}\|z^{*}\|}{1+K(1-\tau)}\leq f(x^{K+1})+g(y^{K+1})-f(x^{*})-g(y^{*})\leq \frac{2C_{3}\|z^{*}\|+C_{4}}{1+K(1-\tau)},
\label{con2}
\end{equation}   
and
\begin{equation}
0\leq \|c-Ax^{K+1}-By^{K+1}\|\leq \frac{2C_{3}}{1+K(1-\tau)},
\label{kuoda62}
\end{equation}
\end{subequations}
where $C_{3}$ and $C_{4}$ are fixed nonnegative real numbers defined by 
$$
C_{3}:=\frac{2}{\lambda}\|z^{0}-z^{*}\|+\|By^{0}-By^{*}\|+\frac{1}{\sqrt{\lambda}}(\|x^{0}-x^{*}\|_{\cS^{0}}+\|y^{0}-y^{*}\|_{\cT^{0}}),
$$
and 
$$
C_{4}:=\frac{1}{2\lambda}\|z^{0}-z^{*}\|^{2}+\frac{\lambda}{2}
\|By^{0}-By^{*}\|^{2}+\frac{1}{2}(\|x^{0}-x^{*}\|^{2}_{\cS^{0}}+
\|y^{0}-y^{*}\|^{2}_{\cT^{0}}).
$$
\end{theorem}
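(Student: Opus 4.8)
The plan is to mirror the non-ergodic analysis of \cite{li2019accelerated} for the extrapolated ADMM, but to carry the semi-proximal operators $\cS^{k},\cT^{k}$ through every estimate and to exploit the structural inequalities \eqref{budengshi1}--\eqref{budengshi2} to absorb the variation of these operators across iterations. First I would record the first-order optimality conditions of the two subproblems: the $x$-step yields $A^{*}z^{k}-\frac{\lambda}{\theta^{k}}A^{*}(Ax^{k+1}+Bv^{k}-c)-\cS^{k}(x^{k+1}-x^{k})\in\partial f(x^{k+1})$ and the $y$-step yields $B^{*}z^{k}-\frac{\lambda}{\theta^{k}}B^{*}(Ax^{k+1}+By^{k+1}-c)-\cT^{k}(y^{k+1}-y^{k})\in\partial g(y^{k+1})$, with $z^{k+1}=z^{k}-\tau\lambda(Ax^{k+1}+By^{k+1}-c)$. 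In parallel I would solve the scalar recursion $1/\theta^{k}=1-\tau+1/\theta^{k-1}$ with $\theta^{-1}=1/\tau$ in closed form, obtaining $1/\theta^{k}=1+k(1-\tau)$; this is the source of the denominator $1+K(1-\tau)=1/\theta^{K}$ in \eqref{con2}--\eqref{kuoda62}, and it also lets me rewrite the extrapolation weight $\theta^{k}(1-\theta^{k-1})/\theta^{k-1}$ purely in terms of $\theta^{k},\theta^{k-1}$ so that the resulting cross-terms telescope.

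The core is a one-step inequality. I would test the two inclusions against $x^{k+1}-x^{*}$ and $y^{k+1}-y^{*}$, invoke the strong-convexity estimates \eqref{budengshi1} and \eqref{budengshi2} at the KKT point $(x^{*},y^{*},z^{*})$ together with \eqref{kkt}, and convert every inner product of the form $\langle a-b,\mathcal{M}(a-c)\rangle$ (for $\mathcal{M}\in\{\cS^{k},\cT^{k},\lambda B^{*}B\}$) into the three-term Pythagorean identity $\tfrac12\|a-b\|_{\mathcal{M}}^{2}+\tfrac12\|a-c\|_{\mathcal{M}}^{2}-\tfrac12\|b-c\|_{\mathcal{M}}^{2}$. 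After collecting terms and using the dual relation to replace $Ax^{k+1}+By^{k+1}-c$ by $(z^{k}-z^{k+1})/(\tau\lambda)$, the target is an inequality of the shape
\begin{align*}
\frac{1}{\theta^{k}}\Big(f(x^{k+1})+g(y^{k+1})-f(x^{*})-g(y^{*})-\langle z^{*},Ax^{k+1}+By^{k+1}-c\rangle\Big)+\Phi_{k+1}\le \Phi_{k},
\end{align*}
where the Lyapunov functional
$$
\Phi_{k}:=\frac{1}{2\lambda}\|z^{k}-z^{*}\|^{2}+\frac{\lambda}{2}\|B(y^{k}-y^{*})\|^{2}+\frac12\|x^{k}-x^{*}\|_{\cS^{k}}^{2}+\frac12\|y^{k}-y^{*}\|_{\cT^{k}}^{2}
$$
satisfies $\Phi_{0}=C_{4}$, since $y^{-1}=y^{0}$ forces $v^{0}=y^{0}$ and kills the extrapolation cross-term at $k=0$.

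The main obstacle is handling the \emph{variation} of the proximal operators inside this telescoping, and this is precisely where the hypotheses $\Sigma_{f}\succeq\cS^{k+1}-\cS^{k}$ and $\Sigma_{g}\succeq\cT^{k+1}-\cT^{k}$ enter. When passing from the $\|x^{k+1}-x^{*}\|_{\cS^{k}}^{2}$ produced by the identity above to the $\|x^{k+1}-x^{*}\|_{\cS^{k+1}}^{2}$ demanded by $\Phi_{k+1}$, a surplus $\|x^{k+1}-x^{*}\|_{\cS^{k+1}-\cS^{k}}^{2}$ appears; the condition $\cS^{k+1}-\cS^{k}\preceq\Sigma_{f}$ lets me dominate it by the strong-convexity gain $\|x^{k+1}-x^{*}\|_{\Sigma_{f}}^{2}$ already generated by \eqref{budengshi1}, and symmetrically for $y$. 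I would also need the coefficient of the extrapolation cross-term (coupling $y^{k+1}-y^{k}$ with $v^{k}-y^{k}$) to be nonnegative, which follows from $0<\tau<1$ and the closed form of $\theta^{k}$. Summing from $k=0$ to $K$ then yields $\frac{1}{\theta^{K}}\big(f(x^{K+1})+g(y^{K+1})-f(x^{*})-g(y^{*})-\langle z^{*},Ax^{K+1}+By^{K+1}-c\rangle\big)\le C_{4}$ together with a uniform bound on the Lyapunov quantities.

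Finally I would convert these into the stated estimates. Since \eqref{opteq} makes the above bracket nonnegative, the upper bound in \eqref{con2} follows by multiplying through by $\theta^{K}=1/(1+K(1-\tau))$ and estimating $\langle z^{*},Ax^{K+1}+By^{K+1}-c\rangle\le\|z^{*}\|\,\|Ax^{K+1}+By^{K+1}-c\|$; the lower bound is immediate from \eqref{opteq} and Cauchy--Schwarz. For \eqref{kuoda62} I would set $\nu^{k}:=z^{k}-z^{k-1}=-\tau\lambda(Ax^{k}+By^{k}-c)$, use the uniform Lyapunov bound and the dual relation to verify the hypothesis $\|(1+K(1-\tau))\nu^{K+1}+\tau\sum_{k=1}^{K}\nu^{k}\|\le\tau\lambda C_{3}$ of Lemma \ref{lemma 2}, and then read off $\|\nu^{K+1}\|\le 2\tau\lambda C_{3}/(1+K(1-\tau))$, which upon division by $\tau\lambda$ is exactly \eqref{kuoda62}. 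Verifying that Lemma \ref{lemma 2}'s hypothesis holds with constant $\lambda C_{3}$ --- that is, bookkeeping the weighted dual increments produced by the extrapolation against the $C_{3}$-combination of initial distances --- is the most delicate piece of arithmetic, and it is the step I expect to consume most of the effort.
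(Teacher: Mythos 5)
Your outline matches the paper's strategy point for point---subproblem optimality conditions, the closed form $1/\theta^{k}=1+k(1-\tau)$, a weighted-gap inequality telescoped against a quadratic functional, absorption of the operator drift via $\Sigma_{f}\succeq\cS^{k+1}-\cS^{k}$, $\Sigma_{g}\succeq\cT^{k+1}-\cT^{k}$, and Lemma \ref{lemma 2} for the feasibility rate (your rescaled $\nu^{k}=\tau\lambda(c-Ax^{k}-By^{k})$ bookkeeping is consistent with the paper's choice $\nu^{k}=c-Ax^{k}-By^{k}$). But there is one genuine gap: your Lyapunov functional $\Phi_{k}$ is built from the \emph{raw} iterates $z^{k}$ and $B(y^{k}-y^{*})$, whereas the telescoping only closes in the momentum-corrected auxiliary sequences of \eqref{dingyi3}, namely $\hat z^{k}:=z^{k}+\frac{\lambda(1-\theta^{k})}{\theta^{k}}(c-Ax^{k}-By^{k})$, $\overline z^{k+1}$, and $\hat y^{k+1}:=\frac{1}{\theta^{k}}y^{k+1}-\frac{1-\theta^{k}}{\theta^{k}}y^{k}$ (the $x$-block alone may stay raw, since $x$ is not extrapolated). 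Two things break without them. First, the extrapolation cross-terms do not cancel: it is the identity $v^{k}-(1-\theta^{k})y^{k}=\theta^{k}\hat y^{k}$ (see \eqref{usefull3}) that converts the $B$-quadratic terms in \eqref{kuoda20} into the telescoping pair $\|B(\hat y^{k}-y^{*})\|^{2}-\|B(\hat y^{k+1}-y^{*})\|^{2}$; with $B(y^{k}-y^{*})$ in its place the one-step descent fails for $k\ge 1$. It coincides with yours only at $k=0$, because $\theta^{0}=1$ forces $\hat z^{0}=z^{0}$ and $\hat y^{0}=y^{0}$---which is why your initialization $\Phi_{0}=C_{4}$ is correct but does not propagate.

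Second, and more damaging for \eqref{kuoda62}, your verification of Lemma \ref{lemma 2}'s hypothesis is circular in the raw variables. The quantity to be bounded, $\frac{1}{\theta^{K}}(c-Ax^{K+1}-By^{K+1})+\tau\sum_{k=1}^{K}(c-Ax^{k}-By^{k})$, equals exactly $\frac{1}{\lambda}(\hat z^{K+1}-\hat z^{0})$ (sum the increments \eqref{usefull1}, using $1/\theta^{-1}=\tau$), so one needs the uniform bound $\|\hat z^{K+1}-z^{*}\|\le\lambda C_{3}$-type control coming from the hat-variable telescoping; a bound on $\|z^{K+1}-z^{*}\|$ does not suffice, since $\hat z^{K+1}-z^{K+1}=\frac{\lambda(1-\theta^{K+1})}{\theta^{K+1}}(c-Ax^{K+1}-By^{K+1})$ grows like $K$ times the very feasibility residual you are trying to bound. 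Relatedly, your one-step display omits the carried term $-\frac{1-\theta^{k}}{\theta^{k}}\bigl(f(x^{k})+g(y^{k})-f(x^{*})-g(y^{*})+\langle z^{*},c-Ax^{k}-By^{k}\rangle\bigr)$, i.e.\ the paper's $\varsigma^{k}$ in \eqref{defvarsigma}, obtained by testing \eqref{kuoda1} at both $(x^{*},y^{*})$ and $(x^{k},y^{k})$; this term is forced by the extrapolation, and upon summation it is precisely what generates the weighted combination above with the weights Lemma \ref{lemma 2} needs. Once you introduce \eqref{dingyi3} and replace your functional by $\frac{1}{2\lambda}\|\hat z^{k}-z^{*}\|^{2}+\frac{\lambda}{2}\|B(\hat y^{k}-y^{*})\|^{2}+\frac{1}{2}\|x^{k}-x^{*}\|_{\cS^{k}}^{2}+\frac{1}{2}\|y^{k}-y^{*}\|_{\cT^{k}}^{2}$, the remainder of your plan---drift absorption by $\Sigma_{f},\Sigma_{g}$, the Lemma \ref{lemma 2} step, and the final bounds via \eqref{opteq} and Cauchy--Schwarz---goes through exactly as in the paper.
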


\begin{proof}
For convenience, denote
\begin{equation}
\label{dingyi3}
\begin{cases}
\hat{z}^{k}:=z^{k}+\frac{\lambda(1-\theta^{k})}{\theta^{k}}
(c-Ax^{k}-By^{k}),
&k=0,1,\ldots
,\\[.5mm]
\overline{z}^{k+1}:=z^{k}+\frac{\lambda}{\theta^{k}}
(c-Ax^{k+1}-Bv^{k}),
&k=0,1,\ldots,
\\[.5mm]
\hat{y}^{k+1}:=\frac{1}{\theta^{k}}y^{k+1}-\frac{1-\theta^{k}}{\theta^{k}}
y^{k},
&k=-1,0,1,\ldots. 
\end{cases}
\end{equation}
It is easy to see from Algorithm \ref{alg2} that $\theta^{0}=1$. 
Then, from \eqref{dingyi3} one has $\hat{z}^{0}=z^{0}$, $\hat{y}^{0}=y^{0}$, and $\hat{y}^1=y^1$. 
Also, from the updating rule of $\theta^k$ in Algorithm \ref{alg2} one has for all $k\ge 0$
\begin{equation}
\label{thetarelation}
{\frac{1}{\theta^{k}}-1+\tau}=\frac{1}{\theta^{k-1}}
\quad 
\mbox{and}
\quad
\theta^{k}=\frac{1}{k(1-\tau)+1}.
\end{equation}
Then, one can see that for any $k\ge 0$, 
\begin{equation}
\label{usefull2}
\begin{array}{lll}
&\hat{z}^{k+1}-\overline{z}^{k+1}
\\[1.5mm]
=&z^{k+1}+\frac{\lambda(1-\theta^{k+1})
}{\theta^{k+1}}(c-Ax^{k+1}-By^{k+1})-z^{k}-\frac{\lambda}{\theta^{k}}
(c-Ax^{k+1}-Bv^{k})
\\[1.5mm]
=&z^{k}+ (\tau+\frac{1}{\theta^{k+1}}-1)\lambda(c-Ax^{k+1}-By^{k+1})-z^{k}-\frac{\lambda}{\theta^{k}}
(c-Ax^{k+1}-Bv^{k})
\\[1.5mm]
=&z^{k}+\frac{\lambda}{\theta^{k}}(c-Ax^{k+1}-By^{k+1})
-z^{k}-\frac{\lambda}{\theta^{k}}
(c-Ax^{k+1}-Bv^{k})
\\[1.5mm]
=&\frac{\lambda}{\theta^{k}}(Bv^{k}-By^{k+1}),
\end{array}
\end{equation}
and 
\begin{equation}
\label{usefull3}
\begin{array}{ll}
v^{k}-(1-\theta^{k})y^{k}
&=y^{k}+\frac{\theta^{k}
(1-\theta^{k-1})}{\theta^{k-1}}(y^{k}-y^{k-1})-(1-\theta^{k})y^{k}
\\[1.5mm]
&
=\frac{\theta^{k}}{\theta^{k-1}}y^{k}-\frac{\theta^{k}(1-\theta^{k-1
})}{\theta^{k-1}}y^{k-1}=\theta^{k}\hat{y}^{k}.
\end{array}
\end{equation}
Meanwhile, it holds for all $k\ge 0$ that
\begin{equation}
\label{usefull1}
\hat{z}^{k+1}-\hat{z}^{k}=\frac{\lambda}{\theta^{k}}(c-Ax^{k+1}-By^{k+1})
-(\frac{\lambda}{\theta^{k-1}}-\tau \lambda)(c-Ax^{k}-By^{k}).
\end{equation}
For all $k\ge 0$ one has from the optimality conditions of the subproblems in Algorithm \ref{algst} that
\begin{equation*}
\begin{array}{ll}
u^{k+1}:&=A^{*}z^{k}-\frac{\lambda}{\theta^{k}}A^{*}(Ax^{k+1}
+Bv^{k}-c)-\mathcal{S}^{k}(x^{k+1}-x^{k}), 
\\[1.5mm]
&=A^{*}\overline{z}^{k+1}-
\mathcal{S}^{k}(x^{k+1}-x^{k})\in \partial f(x^{k+1}),
\\[1.5mm]
w^{k+1}:&=B^{*}z^{k}-\frac{\lambda}{\theta^{k}}B^{*}(Ax^{k+1}
+By^{k+1}-c)-\mathcal{T}^{k}(y^{k+1}-y^{k})
\\[1.5mm]
&=B^{*}\overline{z}^{k+1}
-\frac{\lambda}{\theta^{k}}B^{*}B(y^{k+1}-v^{k})-\mathcal{T}^{k}(y^{k+1}-y^{k})\in \partial g(y^{k+1}). 
\end{array}
\end{equation*}
Then, by combining \eqref{budengshi1}, \eqref{budengshi2}, \eqref{kkt} and \eqref{kuoda1} one has for any $x\in\cX$ and $y\in\cY$, 
\begin{equation}
\label{kuoda1}
\begin{array}{lll}
&f(x^{k+1})+g(y^{k+1})-f(x)-g(y)
+\frac{1}{2}\|x^{k+1}-x\|_{\Sigma_{f}}^{2}+\frac{1}{2}\|y^{k+1}-y\|_{\Sigma_{g}}^{2}
\\[1.5mm]
\leq &\langle u^{k+1},x^{k+1}-x\rangle+\langle w^{k+1},
y^{k+1}-y\rangle
\\[1.5mm]
=&\langle \overline{z}^{k+1},Ax^{k+1}+By^{k+1}-Ax-By\rangle
-\frac{\lambda}{\theta^{k}}\langle B(y^{k+1}-v^{k}),B(y^{k+1}-y)\rangle
\\[1.5mm]
&-\langle x^{k+1}-x^{k},\mathcal{S}^{k}(x^{k+1}-x)\rangle-\langle y^{k+1}-y^{k},\mathcal{T}^{k}(y^{k+1}-y)\rangle.
\end{array}
\end{equation}
Note that $\frac{1}{\theta^{k-1}}-\tau=\frac{1-\theta^{k}}{\theta^{k}}$ for all $k\ge 0$. Therefore, by using \eqref{usefull1} one has
\begin{equation}
\label{defvarsigma}
\begin{array}{rll}
\varsigma^k:=&\frac{1}{\theta^{k}}[f(x^{k+1})+g(y^{k+1})-f(x^{*})-g(y^{*})+\langle z^{*},c-Ax^{k+1}-By^{k+1}\rangle]
\\[1.5mm]
&-(\frac{1}{\theta^{k-1}}-\tau)[f(x^{k})+g(y^{k})
-f(x^{*})-g(y^{*})+\langle z^{*},c-Ax^{k}-By^{k}\rangle]
\\[1.5mm]
=&\frac{1}{\lambda}\langle z^{*},\hat z^{k+1}-\hat z^k\rangle+
\frac{1-\theta^{k}}{\theta^{k}}[f(x^{k+1})+g(y^{k+1})-f(x^{k})-g({y^{k}})]
\\[1.5mm]
&+\big[f(x^{k+1})+g(y^{k+1})-f(x^{*})-g({y^{*}})\big]
\ge 0.
\end{array}
\end{equation}
One can take $(x,y)=(x^{*},y^{*})$ and $(x,y)=(x^{k},y^{k})$ in \eqref{kuoda1} to get two inequalities, based on which and \eqref{usefull1} one can see from the above equality that 
\begin{equation}
\label{kuoda20}
\begin{array}{ll}
\varsigma^k \leq
\frac{1}{\lambda}\langle z^{*},\hat z^{k+1}-\hat z^k\rangle+
\frac{1-\theta^{k}}{\theta^{k}}\langle \overline{z}^{k+1},Ax^{k+1}+By^{k+1}-c-(Ax^k+By^k-c)\rangle
\\[1.5mm]
\quad +\langle \overline{z}^{k+1},Ax^{k+1}+By^{k+1}-c\rangle
-\frac{(1-\theta^{k})\lambda}{(\theta^{k})^2}\langle B(y^{k+1}-v^{k}),B(y^{k+1}-y^k)\rangle
\\[1.5mm]
\quad -\frac{\lambda}{\theta^{k}}\langle B(y^{k+1}-v^{k}),B(y^{k+1}-y^*)\rangle
-\frac{1}{2}(\|x^{k+1}- {x}^*\|^{2}_{\Sigma_{f}}
+\|y^{k+1}- {y}^*\|^{2}_{\Sigma_{g}})
\\[1.5mm]
\quad -\frac{1-\theta^{k}}{\theta^{k}}\langle x^{k+1}-x^{k},\cS^{k}(x^{k+1}-x^{k})\rangle 
-\langle x^{k+1}-x^{k},\mathcal{S}^{k}(x^{k+1}-x^*)\rangle
\\[1.5mm]
\quad -\frac{1-\theta^{k}}{\theta^{k}}
\langle y^{k+1}-y^{k},\cT^{k}(y^{k+1}-y^{k})\rangle
-\langle y^{k+1}-y^{k},\mathcal{T}^{k}(y^{k+1}-y^*)\rangle
\\[1.5mm]
= \frac{1}{\lambda}\langle z^{*}-\overline{z}^{k+1},
\hat{z}^{k+1}-\hat{z}^{k}\rangle
-\frac{1}{2}\|x^{k+1}- {x}^*\|^{2}_{\Sigma_{f}}
-\frac{1}{2}\|y^{k+1}- {y}^*\|^{2}_{\Sigma_{g}}
\\[1.5mm]
\quad -\frac{\lambda}{(\theta^{k})^{2}}\langle B(y^{k+1}-v^{k}),B(y^{k+1}-\theta^{k}y^{*}-(1-\theta^{k})y^{k})\rangle
\\[1.5mm]
\quad -\frac{1-\theta^{k}}{\theta^{k}}\|x^{k+1}-x^{k}\|^{2}_{\mathcal{S}^{k}}
-\langle x^{k+1}-x^{k},\mathcal{S}^{k}(x^{k+1}-x^*)\rangle
\\[1.5mm]
\quad 
-\frac{1-\theta^{k}}{\theta^{k}}\|y^{k+1}
-y^{k}\|^{2}_{\mathcal{T}^{k}}
-\langle y^{k+1}-y^{k},\mathcal{T}^{k}(y^{k+1}-y^*)\rangle
\\[1.5mm]
\leq \frac{1}{2\lambda}[\|\hat{z}^{k}-z^{*}\|^{2}-\|\hat{z}^{k+1}-z^{*}\|^{2}
+\|\overline{z}^{k+1}-\hat{z}^{k+1}\|^{2}
-\|\overline{z}^{k+1}-\hat{z}^{k}\|^{2}]
\\[1.5mm]
\quad 
-\frac{1}{2}\|x^{k+1}- {x}^*\|^{2}_{\Sigma_{f}}
-\frac{1}{2}\|y^{k+1}- {y}^*\|^{2}_{\Sigma_{g}}
\\[1.5mm]
\quad
+\frac{\lambda}{2(\theta^{k})^{2}} \big(\|B(v^{k}-\theta^{k}y^{*}
-(1-\theta^{k})y^{k})\|^{2}
-\|B(y^{k+1}-v^{k})\|^{2}
\\[1.5mm]
\quad -\|B(y^{k+1}-\theta^{k}y^{*}-(1-\theta^{k})y^{k})\|^{2}\big)
\\[1.5mm]
\quad
-\frac{1}{2}\big (\|x^{k+1}-x^*\|_{\cS^{k}}^{2}
-\|x^{k}-x^*\|_{\cS^{k}}^{2}+\|x^{k}-x^{k+1}\|^2_{\cS^{k}}\big)
\\[1.5mm]
\quad  
-\frac{1}{2}\big( \|y^{k+1}-y^*\|_{\cT^{k}}^{2}
-\|y^{k}-y^*\|_{\cT^{k}}^{2}+\|y^k-y^{k+1}\|_{\cT^{k}}^{2}\big).
\end{array}
\end{equation}
Invoking \eqref{dingyi3}, \eqref{usefull2} and \eqref{usefull3} into the above inequality \eqref{kuoda20} and using the fact that $\theta^k\le 1$ for all $k\ge 0$ one can get  
\begin{equation}
\label{kuoda2}
\begin{array}{ll}
\varsigma^k 
\leq
\frac{1}{2\lambda} 
(\|\hat{z}^{k}-z^{*}\|^{2}-
\|\hat{z}^{k+1}-z^{*}\|^{2}
-\|\overline{z}^{k+1}-\hat{z}^{k}\|^{2})
\\[1.5mm]
\quad +\frac{\lambda}{2}(\|B(\hat{y}^{k}-y^{*})\|^{2}-
\|B(\hat{y}^{k+1}-y^{*})\|^{2})
\\[1.5mm]
\quad +\frac{1}{2}(\|x^{k}-x^{*}\|_{\mathcal{S}^{k}}
^{2}-\|x^{k+1}-x^{*}\|_{\mathcal{S}^{k+1}}^{2}+\|y^{k}-
y^{*}\|_{\mathcal{T}^{k}}^{2}-\|y^{k+1}-y^{*}\|_{\mathcal{T}^{k+1}}^{2})
\\[1.5mm]
\quad -\frac{1}{2}(\|x^{k+1}-x^{k}\|_{\Sigma_{f}+\cS^{k}-\cS^{k+1}}+\|y^{k+1}-y^{k}\|_{\Sigma_{g}+\cT^{k}-\cT^{k+1}})\\[1.5mm]
\leq 
\frac{1}{2\lambda} 
(\|\hat{z}^{k}-z^{*}\|^{2}-
\|\hat{z}^{k+1}-z^{*}\|^{2}+\frac{\lambda}{2}(\|B(\hat{y}^{k}-y^{*})\|^{2}-
\|B(\hat{y}^{k+1}-y^{*})\|^{2})
\\[1.5mm]
\quad +\frac{1}{2}(
\|x^{k}-x^{*}\|_{\mathcal{S}^{k}}^{2}
-\|x^{k+1}-x^{*}\|_{\mathcal{S}^{k+1}}^{2}
+\|y^{k}-y^{*}\|_{\mathcal{T}^{k}}^{2}
-\|y^{k+1}-y^{*}\|_{\mathcal{T}^{k+1}}^{2}).
\end{array}
\end{equation}
Summing up the inequality \eqref{kuoda2} from $k = 0$ to $K\ge 0$ and using \eqref{defvarsigma} obtains
\begin{equation}
\label{kuoda3}
\begin{array}{rll}
0\leq&\frac{1}{\theta^{K}}(f(x^{K+1})+g(y^{K+1})-f(x^{*})-g(y^{*})+\langle z^{*},c-Ax^{K+1}-By^{K+1}\rangle
\\[1.5mm]
&+\tau\sum_{k=0}^{K}(f(x^{k})+g(y^{k})-f(x^{*})-g(y^{*})+\langle z^{*},c-Ax^{k}-By^{k}\rangle)
\\[1.5mm]
\leq 
&\frac{1}{2\lambda}(\|\hat{z}^{0}-z^{*}\|^{2}-\|\hat{z}^{K+1}-z^{*}\|^{2})
+\frac{\lambda}{2}(\|B\hat{y}^{0}-By^{*}\|^{2}-\|B\hat{y}^{K+1}-By^{*}\|
^{2})
\\[1.5mm]
&+\frac{1}{2}(\|x^{0}-x^{*}\|^{2}_{\mathcal{S}^{0}}+
\|y^{0}-y^{*}\|^{2}_{\mathcal{T}^{0}}\\[1.5mm]
&-\frac{1}{2}(\|x^{K+1}-x^{*}\|^{2}_{\mathcal{S}^{K+1}}+
\|y^{K+1}-y^{*}\|^{2}_{\mathcal{T}^{K+1}})
. 
\end{array}
\end{equation}
Consequently, 
one can get from \eqref{kuoda3} that for all $K\ge 0$, 
\begin{equation*}
\begin{array}{ll}
&\|\hat{z}^{K+1}-z^{*}\|
\\[1.5mm] 
\leq  &\sqrt{\|\hat{z}^{0}-z^{*}\|^{2}+\lambda^{2}
\|B\hat{y}^{0}-By^{*}\|^{2}+\lambda\|x^{0}-x^{*}\|^{2}_{\mathcal{S}^{0}}+
\lambda\|y^{0}-y^{*}\|^{2}_{\mathcal{T}^{0}}}
\\[1.5mm]
\leq & \|\hat{z}^{0}-z^{*}\|+\lambda
\|B\hat{y}^{0}-By^{*}\|+\sqrt{\lambda}\|x^{0}-x^{*}\|_{\mathcal{S}^{0}}+
\sqrt{\lambda}\|y^{0}-y^{*}\|_{\mathcal{T}^{0}}.
\end{array}
\end{equation*}
Thus, based on \eqref{dingyi3} and the above inequality one can get for all $K\ge 0$, 
\begin{equation}
\label{kuoda52}
\begin{array}{lll}
\|\frac{1}{\theta^{K}}(c-Ax^{K+1}-By^{K+1})+\tau\sum_{k=1}^{K}
(c-Ax^{k}-By^{k})\|
\\[1.5mm]
=
\| \frac{1-\theta^{k+1}}{\theta^{k+1}} (c-Ax^{K+1}-By^{K+1})+\tau\sum_{k=1}^{K+1}
(c-Ax^{k}-By^{k})\|
\\[1.5mm]
=\frac{1}{\lambda}\|\hat{z}^{K+1}-\hat{z}^{0}\|\leq
\frac{1}{\lambda}\|\hat{z}^{K+1}-\hat{z}^{*}\|+\frac{1}{\lambda}\|\hat{z}^{0}-\hat{z}^{*}\|
\leq 
C_{3}.
\end{array}
\end{equation}
Particularly, taking $K=0$ in \eqref{kuoda52} implies that $\|c-Ax^{K+1}-By^{K+1}\|\le C_3$. 
Moreover, taking $\nu^{k}:=c-Ax^{k}-By^{k}$ and using the second equality in \eqref{thetarelation} one can see from the above inequality that for any $K\ge 1$, 
\begin{equation}
\label{kuoda6}
\begin{aligned}
\|(1+K(1-\tau))\nu^{K+1}+\tau\sum_{k=1}^{K}\nu^{k}\|
=
\|\frac{1}{\theta^{K}}\nu^{K+1}+\tau\sum_{k=1}^{K}\nu^{k}\|
\leq C_{3}.
\end{aligned}
\end{equation}
Invoking Lemma \ref{lemma 2}, one can get $\|\sum_{k=1}^{K}\nu^{k}\|\leq \frac{1}{\tau}C_{3}$ for all $K\ge 1$.
Thus, from \eqref{kuoda6} one can see that for all $k\ge 1$, 
$$
\|\nu^{K+1}\|\le \frac{C_3}{(1+K(1-\tau))}+\frac{\tau}{(1+K(1-\tau))}\|\sum_{k=1}^{K}\nu^{k}\|
\le
\frac{2C_{3}}{1+K(1-\tau)}.
$$
Based on the above discussions one can see that \eqref{kuoda62} holds for all $k\ge 0$. 
Furthermore, by using \eqref{opteq}, one can see from \eqref{kuoda3} that
$$
\begin{array}{rl}
0\leq &\frac{1}{\theta^{K}}(f(x^{K+1})+g(y^{K+1})-f(x^{*})-g(y^{*})
+\langle z^{*},c-Ax^{K+1}-By^{K+1})\rangle
\\[1.5mm]
\leq &C_4:=\frac{1}{2\lambda}\|\hat{z}^{0}-z^{*}\|^{2}+\frac{\lambda}{2}
\|B\hat{y}^{0}-By^{*}\|^{2}\\[1.5mm]
&~~~~~~~~+\frac{1}{2}(\|x^{0}-x^{*}\|^{2}_{\mathcal{S}^{0}}+
\|y^{0}-y^{*}\|^{2}_{\mathcal{T}^{0}}).
\end{array}
$$
Combining the above inequality and \eqref{kuoda62}, one can get \eqref{con2} holds for all $K\ge 0$.  
This completes the proof. 
\end{proof}

\begin{remark}
Actually, the assumptions of Theorem \ref{thmst} can be substituted for the inner product form, which is $\langle x,(\Sigma_{f}+\cS^{k}-\cS^{k+1})x\rangle\geq 0$ and $\langle x,(\Sigma_{y}+\cT^{k}-\cT^{k+1})x\rangle\geq 0$.

\end{remark}

Note that when $\cS^{k}$ and $\cT^{k}$ are fixed for all $k\ge 0$, conditions $\Sigma_{f}\succeq\cS^{k+1}-\cS^{k}$ and $\Sigma_{g}\succeq \cT^{k+1}-\cT^{k}$ always hold for all $k\ge 0$. Thus, we have the following result for the convergence of Algorithm \ref{alg2}.

\begin{corollary}
Let $(x^{*},y^{*},z^*)$ be a solution to the KKT system \eqref{kkt} of problem \eqref{problem 1}, and let $\{(x^{k},y^{k},z^{k})\}$ be the sequence generated by Algorithm \ref{alg2}. 
Then, for all $K= 0,1,2\ldots$, one has
$$
\begin{cases}
-\frac{2C_{5}\|z^{*}\|}{1+K(1-\tau)}\leq f(x^{K+1})+g(y^{K+1})-f(x^{*})-g(y^{*})\leq \frac{2C_{5}\|z^{*}\|+C_{6}}{1+K(1-\tau)},
\\[1.5mm]
0\leq \|c-Ax^{K+1}-By^{K+1}\|\leq \frac{2C_{5}}{1+K(1-\tau)},
\end{cases}
$$
where
$C_{5}:=\frac{2}{\lambda}\|z^{0}-z^{*}\|+
\|By^{0}-By^{*}\|+\frac{1}{\sqrt{\lambda}}\|x^{0}-x^{*}\|_{\mathcal{S}}+
\frac{1}{\sqrt{\lambda}}\|y^{0}-y^{*}\|_{\mathcal{T}},
$  
and 
$
C_{6}:=\frac{1}{2\lambda}\|z^{0}-z^{*}\|^{2}+\frac{\lambda}{2}
\|By^{0}-By^{*}\|^{2}+\frac{1}{2}(\|x^{0}-x^{*}\|^{2}_{\mathcal{S}}+
\|y^{0}-y^{*}\|^{2}_{\mathcal{T}}).
$
\end{corollary}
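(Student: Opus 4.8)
The plan is to obtain this Corollary as a direct specialization of Theorem \ref{thmst}, observing that Algorithm \ref{alg2} is exactly Algorithm \ref{algst} run with the proximal operators held fixed across iterations. Concretely, I would set $\mathcal{S}^{k}\equiv\mathcal{S}$ and $\mathcal{T}^{k}\equiv\mathcal{T}$ for every $k\ge 0$ and check that the resulting instance of Algorithm \ref{algst} coincides step-for-step with Algorithm \ref{alg2}: the extrapolation rule for $\theta^{k}$ and $v^{k}$, the two proximal subproblems, and the dual update are identical, so the two algorithms generate the same sequence $\{(x^{k},y^{k},z^{k})\}$.

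The only substantive work is verifying that this constant choice satisfies the hypotheses of Theorem \ref{thmst}. First, the monotonicity conditions $\Sigma_{f}\succeq\mathcal{S}^{k+1}-\mathcal{S}^{k}$ and $\Sigma_{g}\succeq\mathcal{T}^{k+1}-\mathcal{T}^{k}$ collapse to $\Sigma_{f}\succeq 0$ and $\Sigma_{g}\succeq 0$ once $\mathcal{S}^{k+1}-\mathcal{S}^{k}=0$ and $\mathcal{T}^{k+1}-\mathcal{T}^{k}=0$, and these hold automatically since $\Sigma_{f}$ and $\Sigma_{g}$ are self-adjoint positive semidefinite operators by construction (cf.\ \eqref{budengshi1}--\eqref{budengshi2}); this is precisely the remark recorded in the paragraph preceding the Corollary. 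Second, I must reconcile the positive-definiteness requirements: Theorem \ref{thmst} asks for $\Sigma_{f}+A^{*}A+\mathcal{S}^{k}\succ 0$ and $\Sigma_{g}+B^{*}B+\mathcal{T}^{k}\succ 0$, whereas the input of Algorithm \ref{alg2} stipulates $\Sigma_{f}+\mathcal{S}+\lambda A^{*}A\succ 0$ and $\Sigma_{g}+\mathcal{T}+\lambda B^{*}B\succ 0$. Because $\lambda>0$ and $A^{*}A\succeq 0$, one has $\mathrm{null}(\lambda A^{*}A)=\mathrm{null}(A^{*}A)$, so the null space of the PSD sum is unchanged by the scalar $\lambda$; hence the two conditions are equivalent, and the same argument applies to the $y$-block. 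Thus all assumptions of Theorem \ref{thmst} are in force.

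It then remains only to match the constants. Substituting $\mathcal{S}^{0}=\mathcal{S}$ and $\mathcal{T}^{0}=\mathcal{T}$ into the definitions of $C_{3}$ and $C_{4}$ from Theorem \ref{thmst} reproduces verbatim the quantities $C_{5}$ and $C_{6}$, whereupon the bounds \eqref{con2} and \eqref{kuoda62} become exactly the two inequalities asserted in the Corollary. I do not expect any genuine obstacle: the whole argument is the specialization, and the single point that merits an explicit line is the equivalence of the two positive-definiteness conditions under the positive scaling by $\lambda$.
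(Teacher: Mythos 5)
Your proposal is correct and follows essentially the same route as the paper, which simply observes that with $\mathcal{S}^{k}\equiv\mathcal{S}$ and $\mathcal{T}^{k}\equiv\mathcal{T}$ the monotonicity conditions $\Sigma_{f}\succeq\mathcal{S}^{k+1}-\mathcal{S}^{k}$ and $\Sigma_{g}\succeq\mathcal{T}^{k+1}-\mathcal{T}^{k}$ hold trivially and then invokes Theorem \ref{thmst} as an immediate consequence. Your additional check that $\Sigma_{f}+A^{*}A+\mathcal{S}\succ 0$ and $\Sigma_{f}+\mathcal{S}+\lambda A^{*}A\succ 0$ are equivalent for $\lambda>0$ (via the common null space of the PSD summands) is sound and actually fills in a detail the paper leaves implicit.
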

\begin{proof}
This is an immediate consequence of Theorem \ref{thmst}.
\end{proof}

\section{Extension to multi-block problems}
\label{sect 4}
In this section, we focus on using Algorithm \ref{algst}, together with sGS decomposition techniques \cite{li2019block}, to solve multi-block problems.
Let $\cX_{i}$, $i=1,\cdots,p$, $\cY_{j}$,  $j=1,\cdots,q$ and $\cZ$ be finite-dimensional real Hilbert spaces each endowed with inner product $\langle \cdot,\cdot\rangle$ and its induced norm $\|\cdot\|$. Define $\cX:=\mathcal{X}_{1}\times \mathcal{X}_{2}\times \cdots\times \mathcal{X}_{p}$ and $\cY:=\mathcal{Y}_{1}\times \mathcal{Y}_{2}\times \cdots\times \mathcal{Y}_{q}$. 
Let $A: \cX \to\cZ$ and $B:\cY\to\cZ$ be two linear operators, and $c\in\cZ$ be a given vector.
Consider the multi-block optimization problem
\begin{equation}
\label{problem 2}
\begin{array}{cl} 
\min\limits_{x_i\in\cX_i,y_j\in\cY_j}&   f(x_{1})+h(x_{1},x_{2},\cdots,x_{p})+g(y_{1})+
r(y_{1},y_{2},\cdots,y_{q}) \\
\mbox{s.t. }&  Ax+By=c,
\end{array}
\end{equation}
where $x:=(x_{1},x_{2},\cdots,x_{p})$, $y:=(y_{1},y_{2},\cdots,y_{q})$,
$f:\cX_{1}\to(-\infty,\infty ]$ and $g:\cY_{1}\to (-\infty,\infty ]$ are closed proper convex functions, 
and $h: \cX\rightarrow (-\infty,\infty]$ 
and $r: \cY\rightarrow (-\infty,\infty]$ are convex quadratic functions
\begin{equation*}
h(x):=\frac{1}{2}\langle x,\mathcal{P}x\rangle-
\langle b,x\rangle
\quad \mbox{and}\quad 
r(y):=\frac{1}{2}\langle y,\mathcal{Q}y\rangle-\langle d,y\rangle. 
\end{equation*}
Here, $\mathcal{P}:\cX\to \cX$ and $\mathcal{Q}:\cY\to\cY$ are self-adjoint positive semi-definite linear operators. 
Moreover, the linear operators $A$ and $B$ are given by $Ax=A_{1}x_1+\cdots+A_{p}x_p$ and 
$By:=B_{1}y_1+\cdots+B_q y_q$, respectively,  with $A_i:\cX_i\to \cZ$, $1\le i\le p$ and $B_j:\cY_j\to\cZ$, $1\le j\le q$ being linear operators. 

\begin{remark}
In \cite{li2016schur}, a Schur complement based sPADMM was proposed to solve the problem \eqref{problem 2}, and the convergence was guaranteed by establishing its equivalence to a special sPADMM. 
This equivalence was further generalized in \cite{li2019block}
to a block sGS decomposition theorem, so the algorithm in \cite{li2016schur} is usually called the sGS iteration based sPADMM  (sGS-sPADMM). 
This theorem was used in \cite{chen2018unified} to construct a unified framework of inexact majorized indefinite proximal ADMM. 
Motivated by this line of work and the acceleration of sPADMM studied in the previous section, we propose an accelerated variant of sGS-sPADMM.
\end{remark}

For notational convenience, define
\begin{align*}
&x_{\leq i}:=(x_{1},x_{2},...,x_{i}),\ x_{\geq i}:=(x_{i},x_{i+1},...,x_{p}),\quad i=0,1,...,p+1,\\
&y_{\leq j}:=(y_{1},y_{2},...,y_{j}),\ y_{\geq j}:=(y_{j},y_{j+1},...,y_{q}),\quad j=0,1,...,q+1.
\end{align*}
Moreover, we set $x_{0}=x_{\leq 0}=x_{p+1}=x_{\geq p+1}=y_{0}=y_{\leq 0}=y_{q+1}=y_{\geq q+1}=\varnothing$. 
To apply the results in the previous section to the sGS-sPAMM, 
we should specify two linear operators $\mathcal{T}_{f}:\cX\to\cX$ and $\mathcal{T}_{g}:\cY\to\cY$ and define the proximal augmented Lagrangian function  of problem \eqref{problem 2} by
\begin{equation}
\label{palf}
\begin{array}{ll}
\widetilde{\cal L}_{\lambda}((x,y,z),(\hat x,\hat y)):&=f(x_{1})+h(x)+g(y_{1})+r(y)
+\langle z, Ax+By-c\rangle 
\\[1mm]
&\qquad +\frac{\lambda}{2}\|Ax+By-c\|^{2}
+\frac{\lambda}{2}\|x-\hat x\|^2_{\cT_f}
+\frac{\lambda}{2}\|y-\hat y\|^2_{\cT_g}, 
\end{array}
\end{equation}
where $\lambda>0$ is the penalty parameter. 
Moreover, define two linear operators 
\begin{equation}
\label{MNdefine}
\begin{cases}
\mathcal{M}_{\lambda}:=\mathcal{P}+\lambda A^{*}A+\lambda\mathcal{T}_{f},
\\[1.5mm]
\mathcal{N}_{\lambda}:=\mathcal{Q}+\lambda B^{*}B+\lambda\mathcal{T}_{g}.
\end{cases}
\end{equation}
Note that $\mathcal{M}_{\lambda}$ and $\mathcal{N}_{\lambda}$ in \eqref{MNdefine} can be symbolically as 
\begin{align}\label{MN}
\mathcal{M}_{\lambda}:=\left(
\begin{array}{cccc}
 \mathcal{M}_{11} & \mathcal{M}_{12} & \cdots &\mathcal{M}_{1p} \\
 \mathcal{M}^{*}_{12} &\mathcal{M}_{22} &\cdots &\mathcal{M}_{2p}\\
 \vdots& \vdots &\ddots&\vdots\\
 \mathcal{M}_{1p}^{*} &\mathcal{M}_{2p}^{*}&\cdots &\mathcal{M}_{pp}
\end{array}
\right ) \quad \text{and}\quad
\mathcal{N}_{\lambda}:=\left(
\begin{array}{cccc}
 \mathcal{N}_{11} & \mathcal{N}_{12} & \cdots &\mathcal{N}_{1q} \\
 \mathcal{N}^{*}_{12} &\mathcal{N}_{22} &\cdots &\mathcal{N}_{2q}\\
 \vdots& \vdots &\ddots&\vdots\\
 \mathcal{N}_{1q}^{*} &\mathcal{N}_{2q}^{*}&\cdots &\mathcal{N}_{qq}
\end{array}
\right ),
\end{align}
Similarly, we can
also decompose $\mathcal{T}_{f}$ and $\mathcal{T}_{g}$ as \eqref{MN}.
To ensure that the subproblems are well-defined, we assume that
\begin{equation}
\label{pqcondition}
\begin{cases}
\mathcal{P}_{ii}+\lambda A_{i}^{*}A_{i}+\lambda\mathcal{T}_{f_{ii}}\succ 0, &i=1,\cdots,p,\\[1.5mm]
\mathcal{Q}_{jj}+\lambda B_{j}^{*}B_{j}+\lambda\mathcal{T}_{g_{jj}}\succ 0, &j=1,\cdots,q,
\end{cases}
\end{equation}
where $\mathcal{T}_{f_{ii}}:\cX_{i}\to\cX_{i}$, $i=1,\cdots,p$ and $\mathcal{T}_{g_{jj}}:\cY_{j}\to\cY_{j}$, $j=1,\cdots,q$ are the block diagonal parts
of $\mathcal{T}_{f}$ and $\mathcal{T}_{g}$ , respectively. 

Based on \eqref{MN}, we denote the block-diagonal parts of $\mathcal{M}_{\lambda}$ and $\mathcal{N}_{\lambda}$ using $\mathcal{M}_{d}:=\text{Diag}(\mathcal{M}_{11},\cdots,\mathcal{M}_{pp})$ and $\mathcal{N}_{d}:=\text{Diag}(\mathcal{N}_{11},\cdots,\mathcal{N}_{qq})$, respectively. 
In addition, one can denote the symbolically strictly upper triangular part of $\mathcal{M}_{\lambda}$ by $\mathcal{M}_{u}$, and denote the symbolically strictly upper triangular part of $\mathcal{N}_{\lambda}$ by $\mathcal{N}_{u}$.
To solve the problem \eqref{problem 2}, we obtain the following sGS-AsPADMM algorithm (Algorithm \ref{alg4}) and its convergence rate.

\begin{algorithm}
\caption{The sGS-AsPADMM algorithm for solving \eqref{problem 2}}\label{alg4}
\KwIn{
$x^{0}\in\cX$, $y^{0}\in\cY$, $z^{0}\in\cZ$, $\lambda>0$, $\theta_{0}=1$, $\theta^{-1}=1/\tau$, $\tau \in(0,1)$.
Chose self-adjoint positive semi-definite operators $\mathcal{T}_{f}:\cX\to\cX$, $\mathcal{T}_{g}:\cY\to\cY$ for \eqref{palf}.}
\KwOut{ $\{(x^k,y^k,z^k)\}$.} 
\For{$k=0,1, \ldots$,}{
1. $\theta^{k}:=
\frac{\theta^{k-1}}{\theta^{k-1}(1-\tau)+1}$, $ v^{k}:=y^{k}+\frac{\theta^{k}
(1-\theta^{k-1})}{\theta^{k-1}}(y^{k}-y^{k-1});$\\

2. compute for $i = p,\cdots,3, 2,$
\begin{align*}
\overline{x}_{i}^{k}:= \underset{x_{i}\in\cX_{i}}{\arg\min}~&\widetilde{\cal L}_\frac{\lambda}{\theta^{k}}\big(((x_{\leq i-1}^{k},x_{i},\overline{x}^{k}_{\geq i+1}),v^{k},z^{k}),(x^{k},y^{k})\big );
\end{align*}
3. compute for $i = 1,2,\cdots, p,$
\begin{align*} x_{i}^{k+1}:=\underset{x_{i}\in\cX_{i}}{\arg\min}~ &\widetilde{\cal L}_\frac{\lambda}{\theta^{k}}\big(((x_{\leq i-1}^{k+1},x_{i},\overline{x}^{k}_{\geq i+1}),v^{k},z^{k}),(x^{k},y^{k})\big );
\end{align*}
4. compute for $j = q,\cdots,3, 2,$
\begin{align*} \overline{y}_{j}^{k}= \underset{y_{j}\in\cY_{j}}{\arg\min}~& \widetilde{\cal L}_\frac{\lambda}{\theta^{k}}\big ((x^{k+1},(v_{\leq j-1}^{k},y_{j},\overline{y}^{k}_{\geq j+1}),z^{k}),(x^{k},y^{k})\big);
\end{align*}
5. compute for $j = 1,2,\cdots, q,$
\begin{align*}
y_{j}^{k+1}= \underset{y_{j}\in\cY_{j}}{\arg\min}~& \widetilde{\cal L}_\frac{\lambda}{\theta^{k}}\big ((x^{k+1},(y_{\leq j-1}^{k+1},y_{j},\overline{y}^{k}_{\geq j+1}),z^{k}),(x^{k},y^{k})\big );
\end{align*}
6.  compute $z^{k+1}:=z^{k}-\tau \lambda(Ax^{k+1}+By^{k+1}-c)$.}
\end{algorithm}

Under the premise that \eqref{pqcondition} is true, one can define two self-adjoint positive semi-definite linear operators ${\rm sGS}(\mathcal{M}_{\lambda}):\cX\to\cX$ and ${\rm sGS}(\mathcal{N}_{\lambda}):\cY\to\cY$ by 
\begin{equation}
\label{sGsMN}
\begin{cases}
{\rm sGS}(\mathcal{M}_{\lambda}):=\mathcal{M}_{u}\mathcal{M}_{d}^{-1}\mathcal{M}_{u}^{*},
\\[1.5mm]
{\rm sGS}(\mathcal{N}_{\lambda}):=\mathcal{N}_{u}\mathcal{N}_{d}^{-1}\mathcal{N}_{u}^{*}.
\end{cases}
\end{equation}
Using \cite[Theorem 1]{li2019block}, the subproblems of generating the sequence $\{(x^{k},y^{k})\}$ in Algorithm \ref{alg4} can be equivalently written as 
\begin{equation}
\label{sgssub}
\begin{cases}
x^{k+1}=\underset{x\in\cX}{\arg\min}~\widetilde{\cal L}_{\frac{\lambda}{\theta^k}}((x,v^{k},z^{k}),(x^{k},y^{k}))
+\frac{1}{2}\|x-x^{k}\|_{{\rm sGS}(\mathcal{M}_{\lambda/\theta^k})}^{2}
,\\
y^{k+1}=\underset{y\in\cY}{\arg\min}~\widetilde{\cal L}_{\frac{\lambda}{\theta^k}}((x^{k+1},y,z^{k}),(x^{k},y^{k}))
+\frac{1}{2}\|y-y^{k}\|_{{\rm sGS}(\mathcal{N}_{\lambda/\theta^k})}^{2}.
\end{cases}
\end{equation}
To analyze the convergence of Algorithm \ref{alg4}, we should properly estimate the variable proximal terms in the above subproblems. 
With regard to this issue, we define 
\begin{equation}
\label{defxifg}
\begin{cases}
\Xi_{f}:=&2(1-\tau)\lambda (A^{*}A+\cT_{f})_{u}(\mathcal{P}+\lambda A^{*}A+\lambda \cT_{f})_{d}^{-1}\mathcal{P}_{u}^{*}
\\
&+2(1-\tau)(3-\tau)\lambda (A^{*}A+\cT_{f})_{u}(A^{*}A+\cT_{f})_{d}^{-1}(A^{*}A+\cT_{f})_{u}^{*},
\\[1mm]
\Xi_{g}:=&2(1-\tau)\lambda (B^{*}B+\cT_{g})_{u}(\mathcal{Q}+\lambda B^{*}B+\lambda\cT_{g})_{d}^{-1}(\mathcal{Q}+\mathcal{T}_{g})_{u}^{*}
\\
&+2(1-\tau)(3-\tau)\lambda (B^{*}B+\cT_{g})_{u}(B^{*}B+\cT_{g})_{d}^{-1}(B^{*}B+\cT_{g})_{u}^{*}, 
\end{cases}
\end{equation}
Then we have the following result. 

\begin{proposition}
\label{prop-proximal}
Suppose that $\cT_f$ and $\cT_g$ in \eqref{palf} are chosen such that \eqref{pqcondition} holds, $(A^{*}A+\cT_{f})_{d}\succ 0$, and 
$(B^{*}B+\cT_{g})_{d}\succ 0$. 
Then, one has 
$$
\Xi_{f}\succeq {\rm sGS}(\mathcal{M}_{\frac{\lambda}{\theta^{k+1}}})
-{\rm sGS}(\mathcal{M}_{\frac{\lambda}{\theta^{k}}})
\quad  
\mbox{and}
\quad
\Xi_{g} 
\succeq {\rm sGS}(\mathcal{N}_{\frac{\lambda}{\theta^{k+1}}})
-{\rm sGS}(\mathcal{N}_{\frac{\lambda}{\theta^{k}}}), 
$$ 
where $\Xi_f$ and $\Xi_g$ are defined by \eqref{defxifg}. 
\end{proposition}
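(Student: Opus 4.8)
The plan is to reduce this operator inequality to a one–parameter comparison and then expand the difference of the two sGS operators. Writing $W:=A^{*}A+\mathcal{T}_{f}$, the operator $\mathcal{M}_{\mu}=\mathcal{P}+\mu W$ has upper–triangular part $\mathcal{M}_{\mu,u}=\mathcal{P}_{u}+\mu W_{u}$ and block–diagonal part $\mathcal{M}_{\mu,d}=\mathcal{P}_{d}+\mu W_{d}$, both \emph{affine} in the scalar $\mu$, so that ${\rm sGS}(\mathcal{M}_{\mu})=\mathcal{M}_{\mu,u}\mathcal{M}_{\mu,d}^{-1}\mathcal{M}_{\mu,u}^{*}$ is a smooth operator–valued function of $\mu$. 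From the recurrence \eqref{thetarelation} one has $1/\theta^{k+1}=1/\theta^{k}+(1-\tau)$, hence with $s:=\lambda/\theta^{k}\ge\lambda$ and $t:=\lambda/\theta^{k+1}$ the two penalty values differ by exactly $t-s=\lambda(1-\tau)$. Thus it suffices to bound ${\rm sGS}(\mathcal{M}_{t})-{\rm sGS}(\mathcal{M}_{s})$ for every $s\ge\lambda$, uniformly in $k$; the common factor $\lambda(1-\tau)$ in front of both terms of $\Xi_{f}$ in \eqref{defxifg} is precisely this increment.

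Next I would expand the difference explicitly. Using $\mathcal{M}_{t,u}=\mathcal{M}_{s,u}+(t-s)W_{u}$ together with the resolvent identity $\mathcal{M}_{t,d}^{-1}-\mathcal{M}_{s,d}^{-1}=-(t-s)\,\mathcal{M}_{t,d}^{-1}W_{d}\mathcal{M}_{s,d}^{-1}$, the difference factors as $(t-s)=\lambda(1-\tau)$ times a sum of three pieces: two cross terms linear in $W_{u}$ (namely $W_{u}\mathcal{M}_{t,d}^{-1}\mathcal{M}_{t,u}^{*}$ and $\mathcal{M}_{s,u}\mathcal{M}_{t,d}^{-1}W_{u}^{*}$) and one resolvent–derivative piece $-\mathcal{M}_{s,u}\mathcal{M}_{t,d}^{-1}W_{d}\mathcal{M}_{s,d}^{-1}\mathcal{M}_{s,u}^{*}$. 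Since $\Xi_{f}$ as written is not self–adjoint, I would carry out the estimate at the level of quadratic forms, as sanctioned by the Remark following Theorem \ref{thmst}: it is enough to show $\langle x,({\rm sGS}(\mathcal{M}_{t})-{\rm sGS}(\mathcal{M}_{s}))x\rangle\le\langle x,\Xi_{f}x\rangle$ for every $x$, so only the symmetric part of $\Xi_{f}$ is ever tested.

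The estimates rest on monotonicity of the diagonal blocks. Since $s,t\ge\lambda$ and $W_{d}\succeq0$, one has $\mathcal{M}_{\mu,d}\succeq(\mathcal{P}+\lambda W)_{d}\succ0$ for all $\mu\ge\lambda$, whence $\mathcal{M}_{\mu,d}^{-1}\preceq(\mathcal{P}+\lambda W)_{d}^{-1}$ and, using $(\mathcal{P}+\lambda W)_{d}\succeq\lambda W_{d}$, also $(\mathcal{P}+\lambda W)_{d}^{-1}\preceq\tfrac1\lambda W_{d}^{-1}$ (here the hypotheses $(A^{*}A+\mathcal{T}_{f})_{d}\succ0$ and \eqref{pqcondition} guarantee that all these inverses exist). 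Applying these bounds, together with a Young–type operator inequality $XY^{*}+YX^{*}\preceq XX^{*}+YY^{*}$ to symmetrise the two $W_{u}$ cross terms, lets me replace every $\mu$–dependent resolvent by the fixed operators $(\mathcal{P}+\lambda W)_{d}^{-1}$ and $W_{d}^{-1}$ and collect the numerical coefficients: the $\mathcal{P}$–coupled contributions condense into the first term $2\lambda(1-\tau)W_{u}(\mathcal{P}+\lambda W)_{d}^{-1}\mathcal{P}_{u}^{*}$ of $\Xi_{f}$, while the purely $W_{u}$ contributions condense into the second term $2\lambda(1-\tau)(3-\tau)W_{u}W_{d}^{-1}W_{u}^{*}$. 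The verbatim argument with $(B,\mathcal{Q},\mathcal{T}_{g},\mathcal{N})$ in place of $(A,\mathcal{P},\mathcal{T}_{f},\mathcal{M})$ then yields the bound for $\Xi_{g}$.

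The main obstacle I anticipate is this last step: controlling the cross terms uniformly in $\mu=\lambda/\theta^{k}\in[\lambda,\infty)$. Because ${\rm sGS}(\mathcal{M}_{\mu})$ grows linearly in $\mu$, the increment does not vanish as $k\to\infty$, and one must check that its leading part is absorbed by the $W_{u}W_{d}^{-1}W_{u}^{*}$ piece with room to spare (note $2(1-\tau)(3-\tau)>2(1-\tau)$, the slack being what makes this possible), while the remaining $\mathcal{P}$–coupled terms are dominated by the non-symmetric first term of $\Xi_{f}$. Pinning down the exact constant $3-\tau$—which I expect to emerge from the ratio $s/t\le1$ and the factor $(t-s)/\lambda=1-\tau$ produced when the resolvent–derivative piece is estimated—and keeping the first term in its non-symmetrised $(\mathcal{P}+\lambda W)_{d}^{-1}\mathcal{P}_{u}^{*}$ form rather than over-bounding it, is where the delicate bookkeeping lies.
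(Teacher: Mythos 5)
Your plan is correct and, once the algebra settles, it is essentially the paper's proof entered through a different door. With $s:=\lambda/\theta^{k}$, $t:=\lambda/\theta^{k+1}$ and $W:=A^{*}A+\mathcal{T}_{f}$ (the paper's $\mathcal{H}$), the paper first freezes the diagonal via $\mathcal{M}_{t,d}^{-1}\preceq\mathcal{M}_{s,d}^{-1}$ and then factors the remaining difference as $\hat{\Xi}:=(t-s)\,W_{u}\mathcal{M}_{s,d}^{-1}\bigl(2\mathcal{P}+(t+s)W\bigr)_{u}^{*}$ plus a skew-adjoint leftover, which is invisible to quadratic forms; your resolvent-identity expansion is the exact-identity version of the same maneuver, producing the same quantity with $\mathcal{M}_{t,d}^{-1}$ in the middle together with the derivative piece $-(t-s)\mathcal{M}_{s,u}\mathcal{M}_{t,d}^{-1}W_{d}\mathcal{M}_{s,d}^{-1}\mathcal{M}_{s,u}^{*}$. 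Discarding that piece is legitimate but needs one line you omitted: by the same resolvent identity, $(t-s)\mathcal{M}_{t,d}^{-1}W_{d}\mathcal{M}_{s,d}^{-1}=\mathcal{M}_{s,d}^{-1}-\mathcal{M}_{t,d}^{-1}\succeq 0$; in particular the sandwiched operator is self-adjoint and positive semi-definite, which is not evident from the triple product alone since $\mathcal{P}_{d}$ and $W_{d}$ need not commute. From there your estimates coincide with the paper's: $t-s=\lambda(1-\tau)$ from \eqref{thetarelation}, the diagonal bounds $\mathcal{M}_{\mu,d}^{-1}\preceq(\mathcal{P}+\lambda W)_{d}^{-1}$ and $(\mathcal{P}+\mu W)_{d}^{-1}\preceq\mu^{-1}W_{d}^{-1}$, and $\theta^{k}/\theta^{k+1}=1+(1-\tau)\theta^{k}\leq 2-\tau$, which is exactly where the (slack) constant $3-\tau$ in \eqref{defxifg} comes from; what the two organizations buy is the same bound, yours being marginally tighter because the frozen diagonal sits at $t$ rather than $s$.

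One tool you name would sink the argument if used literally: do not symmetrise the cross terms with $XY^{*}+YX^{*}\preceq XX^{*}+YY^{*}$. Any such splitting leaves a term of the form $\mathcal{M}_{s,u}\mathcal{M}_{t,d}^{-1}\mathcal{M}_{s,u}^{*}$ (or a weighted variant), which contains a $\mathcal{P}_{u}(\cdot)\mathcal{P}_{u}^{*}$ block that $\Xi_{f}$ has no term to absorb and which grows like $s\,W_{u}W_{d}^{-1}W_{u}^{*}$ as $k\to\infty$, destroying the uniformity in $k$ that the proposition asserts. No Young inequality is needed: since $\mathcal{M}_{t,d}^{-1}$ is self-adjoint, $\langle x,\mathcal{M}_{s,u}\mathcal{M}_{t,d}^{-1}W_{u}^{*}x\rangle=\langle x,W_{u}\mathcal{M}_{t,d}^{-1}\mathcal{M}_{s,u}^{*}x\rangle$, so the two cross terms combine exactly into $(t-s)\langle x,W_{u}\mathcal{M}_{t,d}^{-1}(2\mathcal{P}+(t+s)W)_{u}^{*}x\rangle$ --- precisely the paper's $\hat{\Xi}$ tested against $x$ --- and this exact pairing is what lets the first term of $\Xi_{f}$ survive in its non-symmetrised form, as your closing remark correctly anticipates. (You also inherit, rather than introduce, the paper's own delicate replacement of the middle inverse inside the non-symmetric sandwich $W_{u}(\cdot)\mathcal{P}_{u}^{*}$, so on that point you are no worse off than the published argument.)
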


\begin{proof}
Fix $k\ge 0$. 
Define $\Xi:={\rm sGS}(\mathcal{M}_{\frac{\lambda}{\theta^{k+1}}})
-{\rm sGS}(\mathcal{M}_{\frac{\lambda}{\theta^{k}}})$
and
$$
\begin{array}{ll}
\hat{\Xi}:=(\frac{\lambda}{\theta^{k+1}} -\frac{\lambda}{\theta^{k}}) \mathcal{H}_{u}(\mathcal{P}+\frac{\lambda}{\theta^{k}} \mathcal{H})_{d}^{-1}(2\mathcal{P}+(\frac{\lambda}{\theta^{k+1}} +\frac{\lambda}{\theta^{k}})\mathcal{H})_{u}^{*}.
\end{array}
$$
Moreover, define $\mathcal{H}:=A^{*}A+\cT_{f}$. Form \eqref{thetarelation} one has $\theta^k > \theta^{k+1}$. 
Then by \eqref{sGsMN} and the definition of $\Theta$ one has  
\begin{equation*}
\begin{array}{rll}
\Xi
\preceq& (\mathcal{P}+\frac{\lambda}{\theta^{k+1}} \mathcal{H})_{u}(\mathcal{P}+\frac{\lambda}{\theta^{k}} \mathcal{H})_{d}^{-1}(\mathcal{P}+\frac{\lambda}{\theta^{k+1}} \mathcal{H})_{u}^{*}\\[1.5mm]
&-(\mathcal{P}+\frac{\lambda}{\theta^{k}} \mathcal{H})_{u}(\mathcal{P}+\frac{\lambda}{\theta^{k}} \mathcal{H})_{d}^{-1}(\mathcal{P}+\frac{\lambda}{\theta^{k}} \mathcal{H})_{u}^{*}\\[1.5mm]
=&\hat{\Xi}+(\mathcal{P}+\frac{\lambda}{\theta^{k}} \mathcal{H})_{u}(\mathcal{P}+\frac{\lambda}{\theta^{k}} \mathcal{H})_{d}^{-1}(\mathcal{P}+\frac{\lambda}{\theta^{k+1}} \mathcal{H})_{u}^{*}
\\[1.5mm]
&-(\mathcal{P}+\frac{\lambda}{\theta^{k+1}} \mathcal{H})_{u}(\mathcal{P}+\frac{\lambda}{\theta^{k}} \mathcal{H})_{d}^{-1}(\mathcal{P}+\frac{\lambda}{\theta^{k}} \mathcal{H})_{u}^{*}. 
\end{array}
\end{equation*}
Thus one has for all $x\in \cX$ that $\langle x,(\hat{\Xi}-\Xi)x\rangle \geq  0$.    
Moreover, for any $x\in\cX$, one can see that 
\begin{equation*}
\begin{array}{rll}
&\langle x,\Xi x\rangle\le\langle x,\hat{\Xi} x\rangle
\\[1.5mm]
=& (1-\tau)\lambda\langle x, \mathcal{H}_{u}(\mathcal{P}+\frac{\lambda}{\theta^{k}} \mathcal{H})_{d}^{-1}(2\mathcal{P}+(\frac{\lambda}{\theta^{k+1}} +\frac{\lambda}{\theta^{k}})\mathcal{H})_{u}^{*}  x\rangle\\[1.5mm]
=&2(1-\tau)\lambda\langle x, \mathcal{H}_{u}(\mathcal{P}+\frac{\lambda}{\theta^{k}} \mathcal{H})_{d}^{-1}\mathcal{P}_{u}^{*} x\rangle\\[1.5mm]
&+(1-\tau)(\frac{\lambda}{\theta^{k+1}} +\frac{\lambda}{\theta^{k}})\lambda\langle x,  \mathcal{H}_{u}(\mathcal{P}+\frac{\lambda}{\theta^{k}} \mathcal{H})_{d}^{-1}\mathcal{H}_{u}^{*} x\rangle\\[1.5mm]
\leq 
&
2(1-\tau)\lambda\langle x, \mathcal{H}_{u}(\mathcal{P}+\lambda \mathcal{H})_{d}^{-1}\mathcal{P}_{u}^{*} x\rangle
+ \frac{2(1-\tau)\lambda^{2}}{\theta^{k+1}}\langle x, \mathcal{H}_{u}(\mathcal{P}+\frac{\lambda}{\theta^{k}} \mathcal{H})_{d}^{-1}\mathcal{H}_{u}^{*} x\rangle\\[1.5mm]
\leq & 2(1-\tau)\lambda \langle x,\mathcal{H}_{u}(\mathcal{P}+\lambda \mathcal{H})_{d}^{-1}\mathcal{P}_{u}^{*} x\rangle
+\frac{2(1-\tau)\lambda\theta^{k}}{\theta^{k+1}}\langle x, \mathcal{H}_{u}(\frac{\theta^{k}}{\lambda}\mathcal{P} +\mathcal{H})_{d}^{-1}\mathcal{H}_{u}^{*} x\rangle\\[1.5mm]
\leq &2(1-\tau)\lambda \langle x,\mathcal{H}_{u}(\mathcal{P}+\lambda \mathcal{H})_{d}^{-1}\mathcal{P}_{u}^{*} x\rangle
+2(1-\tau)(3-\tau)\lambda\langle x, \mathcal{H}_{u}\mathcal{H}_{d}^{-1}\mathcal{H}_{u}^{*} x\rangle\\[1.5mm]
=&\langle x,\Xi_{f}x\rangle.

\end{array}
\end{equation*}
Similarly, one can obtain $\Xi_{g} 
\succeq {\rm sGS}(\mathcal{N}_{\frac{\lambda}{\theta^{k+1}}})
-{\rm sGS}(\mathcal{N}_{\frac{\lambda}{\theta^{k}}})$, 
and this completes the proof. 
\qed
\end{proof}

Based on Proposition \ref{prop-proximal}, we have the following convergence result for Algorithm \ref{alg4}.

\begin{theorem}
\label{convergence-multi}
Suppose that the KKT system of problem \eqref{problem 2} admits a solution.  
Suppose that $\cT_f$ and $\cT_g$ in \eqref{palf} are chosen such that \eqref{pqcondition} holds, $(A^{*}A+\cT_{f})_{d}\succ 0$ and $(B^{*}B+\cT_{g})_{d}\succ 0$. 
Assume that $\Xi_f$ and $\Xi_g$ defined in \eqref{defxifg} satisfy $\Sigma_{f}\succeq \Xi_{f}$ and $\Sigma_{g}\succeq \Xi_{g}$.
Then Algorithm \ref{alg4} is well defined, and it generates the sequence
$\{(x^{k},y^{k},z^{k})\}$ such that both
$\{f(x_{1}^{k})+h(x^{k})+g(y_{1}^{k})+r(y^{k})\}$ converge to the optimal value and $\|Ax+By-c\|$ converge to $0$, respectively, with an $O(1/ K)$ non-ergodic convergence rate.  
\end{theorem}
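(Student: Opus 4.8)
The plan is to recognize Algorithm \ref{alg4} as a concrete instance of the variable-metric scheme of Algorithm \ref{algst} and then to verify, one by one, the hypotheses of Theorem \ref{thmst}. First I would regard \eqref{problem 2} as a two-block problem of the form \eqref{problem 1} by grouping $F(x):=f(x_1)+h(x)$ and $G(y):=g(y_1)+r(y)$; these are closed proper convex functions whose convexity operators in the sense of \eqref{budengshi1}--\eqref{budengshi2} are the $\Sigma_f$ and $\Sigma_g$ appearing in the statement, and since $h,r$ are quadratic one has $\Sigma_f\succeq\mathcal{P}$ and $\Sigma_g\succeq\mathcal{Q}$. The decisive structural step is the block sGS decomposition \cite[Theorem 1]{li2019block}: it shows that the two Gauss--Seidel sweeps producing $x^{k+1}$ (Steps 2--3) and $y^{k+1}$ (Steps 4--5) coincide with the single proximal minimizations \eqref{sgssub}. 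Hence Algorithm \ref{alg4} is exactly Algorithm \ref{algst} applied to $F,G$ with the variable, self-adjoint positive semidefinite proximal operators $\cS^{k}=\tfrac{\lambda}{\theta^{k}}\cT_f+{\rm sGS}(\mathcal{M}_{\lambda/\theta^k})$ and $\cT^{k}=\tfrac{\lambda}{\theta^{k}}\cT_g+{\rm sGS}(\mathcal{N}_{\lambda/\theta^k})$, which reduce to the pure sGS operators when $\cT_f=\cT_g=0$.

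Second I would check well-definedness together with the positivity hypotheses of Theorem \ref{thmst}. Because \eqref{thetarelation} gives $\theta^k\le 1$, hence $\lambda/\theta^k\ge\lambda$, condition \eqref{pqcondition} upgrades to $\mathcal{P}_{ii}+\tfrac{\lambda}{\theta^k}(A_i^*A_i+\cT_{f_{ii}})\succ0$ for every $k$ and $i$ (and analogously for the $y$-blocks). This keeps all diagonal blocks invertible, so that ${\rm sGS}(\mathcal{M}_{\lambda/\theta^k})$ in \eqref{sGsMN} is well defined and the sweeps are uniquely solvable; combined with the block sGS identity $\mathcal{M}+{\rm sGS}(\mathcal{M})\succ0$ this also yields $\Sigma_f+A^*A+\cS^{k}\succ0$ and $\Sigma_g+B^*B+\cT^{k}\succ0$.

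Third --- and this is the heart of the matter --- I would establish the monotonicity conditions $\Sigma_f\succeq\cS^{k+1}-\cS^{k}$ and $\Sigma_g\succeq\cT^{k+1}-\cT^{k}$ demanded by Theorem \ref{thmst}. From \eqref{thetarelation} one has $\tfrac{1}{\theta^{k+1}}-\tfrac{1}{\theta^{k}}=1-\tau$, so the drift splits as $\cS^{k+1}-\cS^{k}=\lambda(1-\tau)\cT_f+\big[{\rm sGS}(\mathcal{M}_{\lambda/\theta^{k+1}})-{\rm sGS}(\mathcal{M}_{\lambda/\theta^k})\big]$, a fixed term plus an sGS increment. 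Proposition \ref{prop-proximal} is tailored to bound the sGS increment above by $\Xi_f$ in the Loewner order (and $\Xi_g$ for the $\cT$-operators), so that together with the standing assumptions $\Sigma_f\succeq\Xi_f$ and $\Sigma_g\succeq\Xi_g$ the monotonicity conditions follow; the point requiring the most care is that the additional fixed drift $\lambda(1-\tau)\cT_f$ must also be absorbed by $\Sigma_f$ (it disappears precisely in the common case $\cT_f=0$).

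Finally, with all hypotheses of Theorem \ref{thmst} in force, I would invoke it directly: estimates \eqref{con2} and \eqref{kuoda62} applied to $F+G$ deliver the $O(1/K)$ non-ergodic decay of $\big(f(x_1^{K+1})+h(x^{K+1})+g(y_1^{K+1})+r(y^{K+1})\big)$ toward the optimal value and of $\|Ax^{K+1}+By^{K+1}-c\|$ toward $0$, which is exactly the asserted conclusion. I expect the main obstacle to be the third step: the proximal metric varies with $k$ through the nonlinear dependence of the sGS operators on $\theta^k$, and showing that its increment is dominated by the convexity operators is where Proposition \ref{prop-proximal} does the decisive work, relying on $\theta^{k+1}<\theta^{k}$ and on uniform control of $(\mathcal{P}+\tfrac{\lambda}{\theta^k}\mathcal{H})_d^{-1}$ with $\mathcal{H}=A^*A+\cT_f$.
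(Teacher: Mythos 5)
Your proposal is correct and follows essentially the same route as the paper's proof: recast Steps 2--5 of Algorithm \ref{alg4} via the block sGS decomposition theorem into the equivalent form \eqref{sgssub}, view the result as an instance of Algorithm \ref{algst} with variable proximal operators, control the metric drift with Proposition \ref{prop-proximal}, and invoke Theorem \ref{thmst}. You are in fact slightly more careful than the paper on one point: you correctly identify the effective operator as $\cS^{k}=\tfrac{\lambda}{\theta^{k}}\cT_{f}+{\rm sGS}(\mathcal{M}_{\lambda/\theta^{k}})$ and observe that the fixed drift $\lambda(1-\tau)\cT_{f}$ must also be dominated by $\Sigma_{f}$ --- a requirement that the paper's stated hypothesis $\Sigma_{f}\succeq\Xi_{f}$ covers only when $\cT_{f}=0$ (as in the paper's applications, where the proximal terms are not added), so your caveat is a genuine refinement rather than a discrepancy.
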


\begin{proof}
Since \eqref{pqcondition} holds, the subproblems of Algorithm \ref{alg4} can be equivalently recast as \eqref{sgssub}. 
Consequently, Algorithm \ref{alg4} can be seen as a special case of Algorithm \ref{algst}. 
Moreover, since the conclusion of Proposition \ref{prop-proximal} holds, from $\Sigma_{f}\succeq \Xi_{f}$ and $\Sigma_{g}\succeq \Xi_{g}$ it can be seen that Theorem \ref{thmst} is sufficient to ensure the $O(1/ K)$ non-ergodic convergence rates for both the objective value and the feasibility. 
This completes the proof. 
\qed
\end{proof}

\section{Applications}
\label{sect 5}
In this section, we study the applications of the algorithms proposed in this paper. 
These applications include robust low-rank tensor completion (RTC) problems \cite{wu2023robust}, $\ell_{0}$ regularization problems for mixed sparse optimization \cite{deng2013group}, and a linear homogeneous equation \cite{chen2016direct}. 
The two non-convex applications have a natural difference-of-convex (DC) structure, so the DC algorithm is preferable to solve them, in which the multi-block convex subproblems can be tackled by the proposed sGS-AsPADMM (Algorithm \ref{alg4}). 
Compared with single-loop algorithms for nonconvex problems such as
\cite{zeng2022moreau,zhang2020proximal,zhang2022global}, the DC algorithm is better suited to the problems we consider and has the potential to converge to points
with better stationary properties \cite{pang2016}.

\subsection{Application to RTC problems}
The original model of RTC problems aims to minimize an objective that consists of the tensor rank function plus the $\ell_0$ norm under limited sample constraints, i.e., 
\begin{equation}
\label{RTC 1}
\begin{array}{cl} 
\min\limits_{\bm{G},\bm{M}} & \text{rank}_{a}(\bm{G})+\lambda\|\bm
{M}\|_{0}\\
\mbox{s.t.} & \Pi_{\Omega}(\bm{G}+\bm{M})=\Pi_{\Omega}(\bm{X}),\ 
\|\bm{G}\| \leq j_{1},\ 
\|\bm{M}\|_{\infty} \leq j_{2},
\end{array}
\end{equation}
where $\bm{G}\in \mathbb{R}^{n_{1}\times n_{2}\times n_{3}}$ represents the restored tensor data and $\bm{M}\in\mathbb{R}^{n_{1}\times n_{2}\times n_{3}}$ represents the tensor data of resulting noise, 
$\text{rank}_{a}(\bm{G})$ is the tensor average rank of $\bm{G}$, 
$\|\cdot \|_{0}$ is the number of nonzero elements, 
$ \|\cdot\|_{\infty}$ and $\|\cdot \|$ are the infinity norm and tensor spectral norm, respectively, $\bm{X}\in \mathbb{R}^{n_{1}\times n_{2}\times n_{3}}$ is the tensor data of complete picture, $j_{1}$ and $j_{2}$ are given constants, 
and $\Omega$ is a set of indices for tensors in $\mathbb{R}^{n_{1}\times n_{2}\times n_{3}}$. 
Furthermore, the projection operator $\Pi_{\Omega}(\cdot)$ is defined by
$$
\Pi_{\Omega}(\bm{X}): = 
\begin{cases}
\bm{X}_{ijs}, &\mbox{if } ijs \mbox{ is an index in } \Omega,
\\
0, &\text{otherwise}.	
\end{cases}
$$
According to \cite{zhao2022robust}, model \eqref{RTC 1} can be equivalently written as the following bound-constrained nonconvex optimization problem
\begin{equation}
\label{RTC 2}
\begin{array}{cl}
\min\limits_{\bm{G},\bm{M}}  &\|\bm{G}\|_{\rm TNN}-H_{1}
(\bm{G})+\lambda(\|\bm{M}\|_{1}-H_{2}(\bm{M}))
\\
\mbox{s.t.} 
& 
\Pi_{\Omega}(\bm{G}+\bm{M})=\Pi_{\Omega}(\bm{X}),
\ 
\|\bm{G}\| \leq j_{1},
\ 
\|\bm{M}\|_{\infty} \leq j_{2},
\end{array}
\end{equation}
where $\|\bm{G}\|_{\rm TNN}$ is the TNN norm of $\bm{G}$ \cite{Lu2019tensor}, $H_{1}$ and
$H_{2}$ are convex function defined by
$$H_{1}(\bm{G}):=\frac{1}{n_{3}}\sum_{i=1}^{n_{3}}g(\sum(\hat{\bm{G}}
^{i})),\quad  
H_{2}(\bm{M}):=\sum_{i=1}^{n_{1}}\sum_{j=1}^{n_{2}}
\sum_{h=1}^{n_{3}}h(\bm{M}_{ijk}),
$$
in which $\hat{\bm{G}}^{i}$ represents the $i$-th frontal slice of the tensor obtained after the fast Fourier transform (FFT) of $\bm{G}$, $g(x):=\sum_{i=1}^{{\rm dim}(x)}h(x_{i})$, and $h$ is a continuously differentiable convex function associated with the MCP and SCAD functions. Specifically, with $\gamma$, $\gamma_1$ and $\gamma_2$ being positive real numbers such that $\gamma_1\le \gamma_2$,  
\begin{equation}\notag
	h(x_{i}) := \begin{cases}
	\frac{x_{i}^{2}}{2\gamma}~, &\lvert x_{i}\rvert \leq \gamma,\\
	\lvert x_{i}\rvert-\frac{\gamma}{2}~, 	&\lvert x_{i}\rvert > \gamma,
		   \end{cases}
\end{equation}
or
\begin{equation}\notag
	h(x_{i}) := \begin{cases}
	0~, &\lvert x_{i}\rvert \leq \gamma_{1},\\
    \frac{x_{i}^{2}-2\gamma_{1}\lvert x_{i}\rvert+\gamma_{1}^{2} }{2(\gamma_{2}-\gamma_{1})},~&\gamma_{1}<\lvert x_{i}\rvert\leq \gamma_{2},\\
	\lvert x_{i}\rvert-\frac{\gamma_{1}+\gamma_{2}}{2}~, 	&\lvert x_{i}\rvert > \gamma_{2}.
		   \end{cases}
\end{equation}
For \eqref{RTC 2}, \cite{zhao2022robust} proposed a proximal majorization-minimization (PMM) algorithm, in which the subproblem at the $(n+1)$-th iteration was given by
\begin{equation}
\label{RTC 3}
\begin{array}{cl}
\min\limits_{\bm{G},\bm{M},\bm{Z}} &\mathcal{F}_{n}
(\bm{G},\bm{M},\bm{Z})\\[1mm]
\mbox{s.t.} &\bm{G}+\bm{M}=\bm{Z},
\end{array}
\end{equation}
where, with $(\bm{G}^n,\bm{M}^n,\bm{Z}^n)$ being the solution of the subproblem at the $n$-th iteration, 
$$
\begin{array}{ll}
\mathcal{F}_{n}
(\bm{G},\bm{M},\bm{Z}):= &
\|\bm{G}\|_{{\rm TNN}}-
\langle \nabla H_{1}(\bm{G}^{n}),\bm{G} \rangle+\lambda(\|
\bm{M}\|_{1}-\langle \nabla H_{2}(\bm{M}^{n}),\bm{M} \rangle)
\\[1mm]
&+\frac{\eta}{2}\|\bm{G}-\bm{G}^{n}\|^{2}
+\frac{\eta}{2}\|\bm{M}-\bm{M}^{n}\|^{2}+\frac{\eta}{2}\|
\bm{Z}-\bm{Z}^{n}\|^{2}
\\[1mm]
&+\delta_{D_{1}}(\bm{G})+\delta_{D_{2}}(\bm{M})+\delta_{D_{3}}(\bm{Z}),
\end{array}
$$
where $\|\cdot\|$ means the Frobenius norm of tensor, 
$\eta>0$, 
$D_{1}:=\{\bm{G} \mid \|\bm{G}\| \leq j_{1}\}$, 
$D_{2}:=\{\bm{M} \mid \|\bm{M}\|_{\infty} \leq j_{2}\}$ and 
$D_{3}:=\{\bm{Z} \mid \Pi_{\Omega}(\bm{Z})=\Pi_
{\Omega}(\bm{X})\}$.

Note that \eqref{RTC 3} is a separable three-block convex problem, which fits the scope of Algorithm \eqref{alg4}. 
Since the augmented Lagrangian function of problem \eqref{RTC 3} is defined by 
\begin{align*}
\mathcal{L}_{\beta}(\bm{Z},\bm{G},\bm{M},\mu):=&\|\bm{G}\|_{{\rm TNN}}-
\langle \nabla H_{1}(\bm{G}^{n}),\bm{G} \rangle+\lambda(\|
\bm{M}\|_{1}-\langle \nabla H_{2}(\bm{M}^{n}),\bm{M} \rangle)\notag\\
&+\frac{\eta}{2}\|\bm{G}-\bm{G}^{n}\|^{2}
+\frac{\eta}{2}\|\bm{M}-\bm{M}^{n}\|^{2}+\frac{\eta}{2}\|
\bm{Z}-\bm{Z}^{n}\|^{2}\\&+\langle \mu,\bm{Z}-\bm{G}-\bm{M}\rangle
+\frac{\beta}{2}\|\bm{Z}-\bm{G}-\bm{M}\|^{2}, 
\end{align*}
the resulting application of Algorithm \ref{alg4} (without adding the proximal terms) can be given as the following algorithm.

\begin{algorithm}
\caption{The sGS-AsPADMM algorithm for solving \eqref{RTC 3}}\label{alg6}
\KwIn{
$\bm{Z}^{0}$, $\bm{G}^{0}$, $\bm{M}^{0}$, $\mu^{0}$, $\beta>0$, $\tau \in (0,1)$, $\theta^{-1}=1/\tau$, $\eta>0$, $\lambda>0$.}
\KwOut{ $\{(\bm{Z}^k,\bm{G}^k,\bm{M}^k)\}$.} 
\For{$k=0,1, \ldots$,}{
~~1. $\theta^{k}:=
\frac{\theta^{k-1}}{\theta^{k-1}(1-\tau)+1}$, $ \bm{V}^{k}=\bm{M}^{k}+\frac{\theta^{k}
(1-\theta^{k-1})}{\theta^{k-1}}(\bm{M}^{k}-\bm{M}^{k-1})$;\\
\begin{align*}
2. ~\bm{Z}^{k+\frac{1}{2}}:&=\underset{\bm{Z}\in D_{3}}
{{\rm \arg\min}}\, \mathcal{L}_{\frac{\beta}{\theta^{k}}}(\bm{Z},\bm{G}^{k},\bm{V}^{k},\mu^{k})\\
&=\Pi_{\Omega}(\bm{X})+\Pi_{\overline{\Omega}}
\{(\eta\theta^{k} I+\beta I)^{-1}(\theta^{k}\eta \bm{Z}^{n}+\beta(\bm{G}^{k}+\bm{V}^{k})-\theta^{k}\mu^{k})\};
\end{align*}
~~3. $\bm{G}^{k+1}:=\underset{\bm{G}}
{{\rm \arg\min}}\, \mathcal{L}_{\frac{\beta}{\theta^{k}}}(\bm{Z}^{k+\frac{1}{2}},\bm{G},\bm{V}^{k},\mu^{k});~$\\
\begin{align*}
4.~\bm{Z}^{k+1}:&=\underset{\bm{Z}\in D_{3}}
{{\rm \arg\min}}\, \mathcal{L}_{\frac{\beta}{\theta^{k}}}(\bm{Z},\bm{G}^{k+1},\bm{V}^{k},\mu^{k})\\
&=\Pi_{\Omega}(\bm{X})+\Pi_{\overline{\Omega}}\{
(\eta\theta^{k} I+\beta I)^{-1}(\theta^{k}\eta \bm{Z}^{n}+\beta(\bm{G}^{k+1}+\bm{V}^{k})-\theta^{k}\mu^{k})\},
\end{align*}
~~5. $ \bm{M}^{k+1}:=\underset{\mathcal{M}}
{{\rm \arg\min}}\, \mathcal{L}_{\frac{\beta}{\theta^{k}}}(\bm{Z}^{k+1},\bm{G}^{k+1},\bm{M},\mu^{k});$\\
~~6. $\mu^{k+1}:=\mu^{k}+\tau\beta(\bm{Z}^{k+1}-\bm{G}^{k+1}
-\bm{M}^{k+1}).$}
\end{algorithm}

\begin{remark}
In Algorithm \ref{alg6}, $\overline{\Omega}$ denotes the indices of the tensors in $\mathbb{R}^{n_{1}\times n_{2}\times n_{3}}$ that are not in $\Omega$.
Since $D_3$ is a subspace, the sGS decomposition theorem is still applicable. 
In addition, the subproblem for computing $\bm{G}^{k+1}$ can be achieved by the tensor singular value decomposition, and the subproblem for getting $\bm{M}^{k+1}$ is just a simple soft shrinkage thresholding operation.  
Moreover, regarding Algorithm \ref{alg6} as Algorithm \ref{alg4} and using the terminology for the latter, we can set $\tau\in(2-\sqrt{1+\frac{\eta}{2\lambda}}, 1)$, so that 
$$
\Sigma_{f}:=\eta I_{\bm{G}+\bm{Z}} \succeq \Xi_{f}:=2(1-\tau)(3-\tau)
\lambda\begin{pmatrix} I_{\bm{G}} & 0\\ 0 & 0 \end{pmatrix}.
$$   

Therefore, Theorem \ref{convergence-multi} can be applied to guarantee the $O(1/ K)$ non-ergodic convergence rates of both the objective and the feasibility of Algorithm \ref{alg6}. 
\end{remark}

In our numerical experiment, we compare the performance of Algorithms \ref{alg1} and Algorithms \ref{alg6} and qualify the approximate solutions by the duality gap and the primal infeasibility, i.e., 
$$
\varepsilon_{\rm gap}:=\frac{\mid {\rm pobj}-{\rm dobj}\mid}{1+\mid {\rm pobj}\mid +\mid {\rm dobj}\mid},
\quad \text{and} \quad\varepsilon_{p}:=\frac{\|\bm{Z}-
\bm{G}-\bm{M}\|_{F}}{1+\|\bm{Z}\|_{F}+\| \bm{G}\|_{F}+
\| \bm{M}\|_{F}} ,
$$
where ``pobj'' and ``dobj'' are the primal and dual objective values, respectively. 
We terminate the algorithms if $\max\{\varepsilon_{\rm gap},\varepsilon_{\rm p}\}$ is less than or equal to a certain threshold $\epsilon$, or the number of iterations is greater than $200$. 
The test problems are constructed from the data including ``peppers and women''\footnote{\url{http://sipi.usc.edu/database/}}, ``flowers and houses''\footnote{\url{https://www2.eecs.berkeley.edu/Research/Projects/CS/vision/bsds/}}. 
Specifically, for a given tensor, we use a certain sample ratio $\text{SR}=\frac{\mid\Omega\mid}{n_{1}\times n_{2}\times n_{3}}$ to construct the corresponding RTC problem, where $|\Omega|$ is the cardinality of $\Omega$. 
For each tensor, we randomly add the salt-and-pepper impulse noise in the ratio $\alpha=0.2$ \cite{chan2005salt}.
For all the problems tested, the penalty parameters ($\gamma$, $\gamma_1$, $\gamma_2$) are selected based on the performance of sGS-sPADMM and then fixed for all other algorithms. 
We set the penalty parameter $\beta=0.1$ in the augmented Lgrangian function and $\tau=0.95$ for Algorithm \ref{alg6}.

Since sGS-AsPADMM only accelerates the process to find the solution to the subproblem \eqref{RTC 3}, we first compare the time and number of iterations taken to solve the subproblem for the first time. Moreover, to show the superiority of the accelerated Algorithm $\ref{alg6}$, we use the ADMM algorithm directly extended to 3-block (ADMM-3d) to solve the subproblem \eqref{RTC 3} for comparison. 
For different images and different disturbing noises, we use ADMM-3d, sGS-sPADMM, and sGS-AsPADMM to solve the subproblems \eqref{RTC 3}, respectively, and the performance data of the different methods are shown in Table \ref{table1}, where iter and time represent the number of iterations and time spent to solve the subproblem \eqref{RTC 3} for the first time, respectively.

\begin{table}
\begin{center}
\begin{tabular}{ |m{0.6cm}<{\centering}|m{1.5cm}<{\centering}|m{1cm}<{\centering}
|m{1cm}<{\centering}|m{1cm}<{\centering}|m{1cm}<{\centering}
|m{1cm}<{\centering}|m{1cm}<{\centering}| }
 \hline
 \multirow{2}{1.3em}{SR} & \multirow{2}{3em}{picture} & \multicolumn{2}{|c|}{ADMM-3d} & \multicolumn{2}{|c|}{sGS-sPADMM} &\multicolumn{2}{|c|}{sGS-AsPADMM}
 \\
 \cline{3-8}
& & iter & time  & iter & time& iter & time  \\
 \hline
 \multirow{4}{2em}{~~0.4}
& pepper & 23 & 12.631  & 21 & 11.961&12 & 6.586  \\
 \cline{2-8}
& flower & 23 & 5.430  & 20 & 4.844 & 10 &2.438 \\
\cline{2-8}
& house & 22 & 2.390 & 16 & 1.766 & 9& 1.099 \\
\cline{2-8}
& women & 24 & 13.215  & 22 & 12.189 & 12 & 6.765 \\
\cline{2-8}
 \hline
 \multirow{4}{2em}{~~0.5}
& pepper & 20 & 10.953  & 18 & 10.286&10 &5.791   \\
 \cline{2-8}
& flower & 21 & 5.044  & 17 & 4.146 &9 &2.207 \\
\cline{2-8}
& house & 21 & 2.297 & 14 & 1.609 & 10& 1.231 \\
\cline{2-8}
& women & 21 & 11.623  & 19 & 10.822 &10 & 5.698 \\
\cline{2-8}
 \hline
  \multirow{4}{2em}{~~0.6}
& pepper & 19 & 10.556  & 15 & 8.307&9 & 4.983  \\
 \cline{2-8}
& flower & 19 & 4.544  & 14 & 3.469 &8 & 1.969\\
\cline{2-8}
& house & 20 & 2.230 & 15 & 1.716 &9 & 1.114 \\
\cline{2-8}
& women & 20 & 11.200  & 17 & 9.477 & 9& 5.290 \\
\cline{2-8}
 \hline
  \multirow{4}{2em}{~~0.7}
& pepper & 17 & 9.485  & 12 & 6.994& 8& 4.626   \\
 \cline{2-8}
& flower & 18 & 4.470  & 11 & 2.725 & 7& 1.768 \\
\cline{2-8}
& house & 18 & 1.996 & 12 & 1.348 & 9 & 1.047 \\
\cline{2-8}
& women & 18 & 10.355  & 14 & 8.328 & 8& 4.682 \\
\cline{2-8}
 \hline
  \multirow{4}{2em}{~~0.8}
& pepper & 16 & 8.831  & 12 & 6.754& 6 &  3.451  \\
 \cline{2-8}
& flower & 17 & 4.214  & 12 & 3.004 & 5& 1.265 \\
\cline{2-8}
& house & 16 & 1.841 & 12 & 1.445 &7 & 1.001 \\
\cline{2-8}
& women & 16 & 9.403  & 12 & 6.872 &6 & 3.371 \\
\cline{2-8}
 \hline
\end{tabular}
\end{center}
\caption{Comparison of ADMM-3d, sGS-sPADMM and sGS-AsPADMM.}\label{table1}
\end{table}

Moreover, we compared the time and number of steps taken by the overall algorithm and the number of steps required for each sub-iteration. All and the specific results are shown in Table \ref{table5}.

According to Table \ref{table1}, sGS-AsPADMM performs much better than sGS-sPADMM and ADMM-3d. 
In addition, we set the noise level $\alpha$ to $0.2$ and the sample ratio $\text{SR}= 0.8$, and pilot the error reduction curves for different images in Figure \ref{figure1}.
It can be seen from the error reduction curves for different image data that, compared with ADMM-3d and sGS-sPADMM, sGS-AsPADMM decreases the relative error faster and there is obvious progress in time saving.

\begin{landscape}
\begin{table}
\begin{tabular}{ |m{1cm}<{\centering}|m{1.3cm}<{\centering}|m{0.7cm}<{\centering}
|m{0.8cm}<{\centering}
|m{2.5cm}<{\centering}|m{0.7cm}<{\centering}|m{0.8cm}<{\centering}
|m{2.8cm}<{\centering}
|m{0.7cm}<{\centering}|m{0.8cm}<{\centering}
|m{2.5cm}<{\centering}| }
 \hline
 \multirow{2}{2em}{SR} & \multirow{2}{3em}{picture} & \multicolumn{3}{|c|}{ADMM-3d} & \multicolumn{3}{|c|}{sGS-sPADMM} &\multicolumn{3}{|c|}{sGS-AsPADMM}
 \\
 \cline{3-11}
& & total iter &time & iter per step & total iter  &time & iter per step & total iter  &time & iter per step  \\
 \hline
 \multirow{3}{3em}{0.4}
& pepper & 96 & 40.01& 23/17/18/19/19  & 92&38.91 & 21/16/17/18/20&68 &26.68 & 12/14/13/14/15  \\
 \cline{2-11}
& flower & 100 & 21.85   & 23//17/18/20/22  & 117 & 20.68  & 20/16/18/19/21/23 & 67 &11.76   & 10/14/13/14/16 \\
\cline{2-11}
& house & 107 & 19.83  & 22/18/20/22/25 & 117 &  17.51 & 16/16/18/19/22/26 & 68
& 6.83  & 9/13/13/15/18 \\
\cline{2-11}
 \hline
 \multirow{3}{3em}{0.5}
& pepper & 68 &  29.03  & 20/15/16/17  & 60 & 24.87   & 18/13/14/15&45 &   18.28 &10/12/11/12   \\
 \cline{2-11}
& flower & 69 &  15.60  & 21/15/16/17  & 78 & 14.01   & 17/13/14/16/18 &44 &7.72    &9/12/11/12 \\
\cline{2-11}
& house & 91 & 7.63   & 21/16/16/18/20 & 74 & 6.42   & 14/12/13/17/18 & 44& 3.87   & 10/11/11/12 \\
\cline{2-11}
 \hline
  \multirow{3}{3em}{0.6}
& pepper & 47 &  26.38  & 19/14/14  & 51 & 22.44   &15/11/12/13 &29 &  14.09  & 9/10/10  \\
 \cline{2-11}
& flower & 62 &  13.69  & 19/14/14/15  & 51 & 13.91   & 14/11/12/14 &39 &  7.10  & 8/10/10/11\\
\cline{2-11}
& house & 83 & 7.06   & 20/14/15/16/18 & 63 & 5.51   & 15/9/11/13/14 &42 & 4.86   & 9/10/10/13 \\
\cline{2-11}
 \hline
  \multirow{4}{3em}{0.7}
& pepper & 42 & 17.30   & 17/13/12  & 31 &  12.55  & 12/9/10& 24& 9.59   & 8/8/8   \\
 \cline{2-11}
& flower & 59 & 10.63   & 18/13/14/14  & 42 &  7.66  & 11/8/10/12 & 35&  6.09  & 7/9/9/10 \\
\cline{2-11}
& house & 59 &  8.83  & 18/13/14/14 & 40 &  5.67  & 12/8/9/11 & 35 &3.21    & 9/8/9/9 \\
\cline{2-11}
 \hline
  \multirow{3}{3em}{0.8}
& pepper & 41 & 16.58   & 16/13/12  & 29 &11.30    & 12/8/9& 22 & 8.81   &  6/8/8  \\
 \cline{2-11}
& flower & 56 & 11.72   & 17/12/13/14  & 39 & 7.03   & 12/8/9/10 & 29&  5.91  & 5/8/9/7 \\
\cline{2-11}
& house & 44 &5.94    & 16/12/7/9 & 35 & 3.31   & 12/6/7/10 &28 &  2.81  & 7/6/8/7 \\
\cline{2-11}
 \hline
\end{tabular}
\caption{Comparison of ADMM-3d, sGS-sPADMM and sGS-AsPADMM.}\label{table5}
\end{table}

\begin{figure} 
\centering
\subfigure[pepper]{
\includegraphics[width=0.375\textwidth]{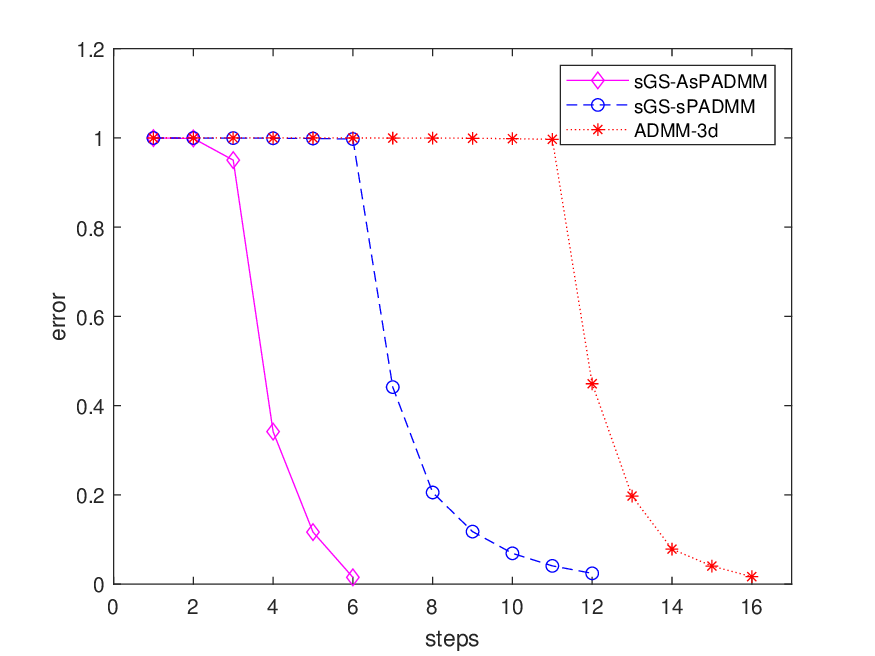} }
\subfigure[flower]{
\includegraphics[width=0.375\textwidth]{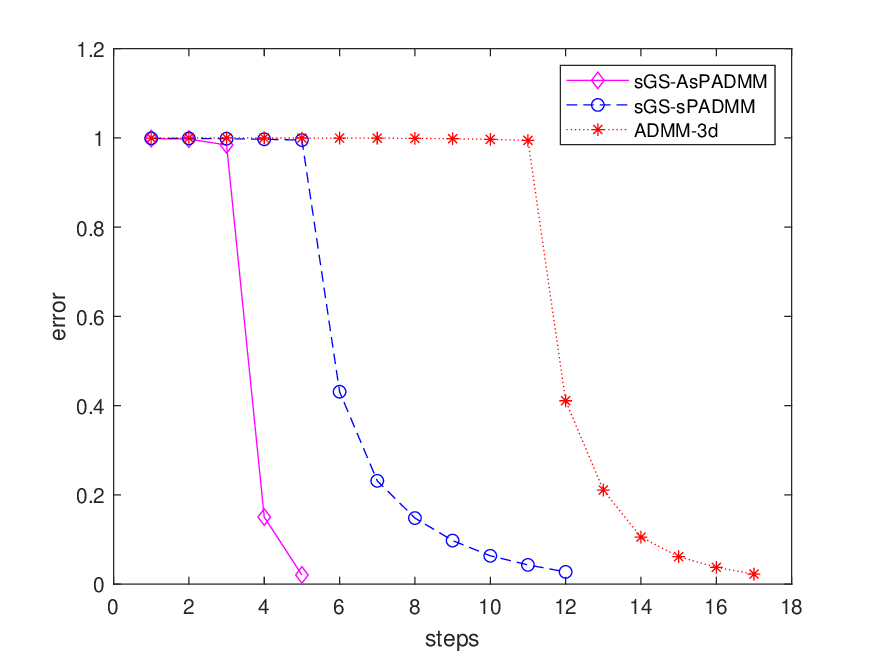} }
\subfigure[woman]{
\includegraphics[width=0.375\textwidth]{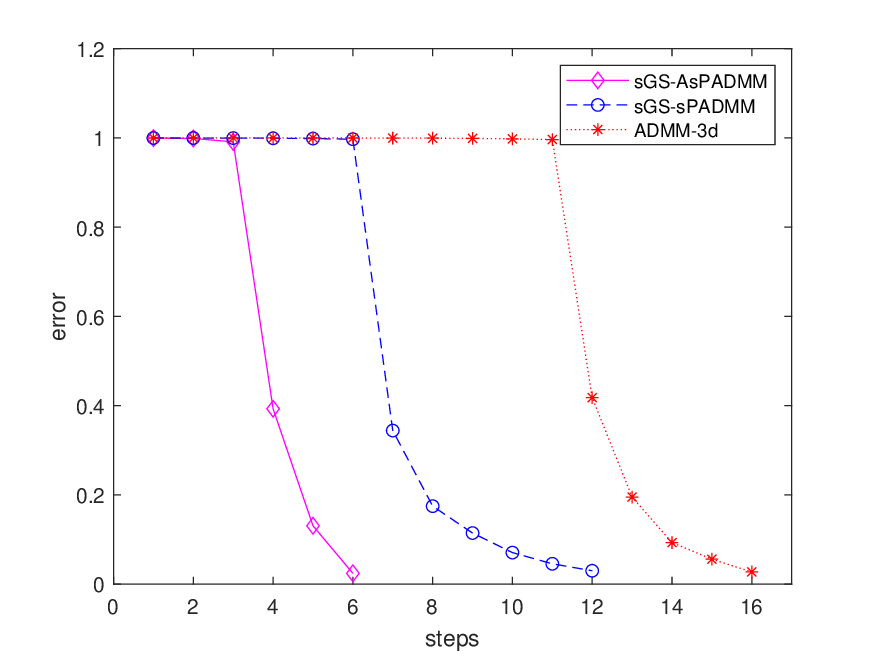}}
\subfigure[house]{
\includegraphics[width=0.375\textwidth]{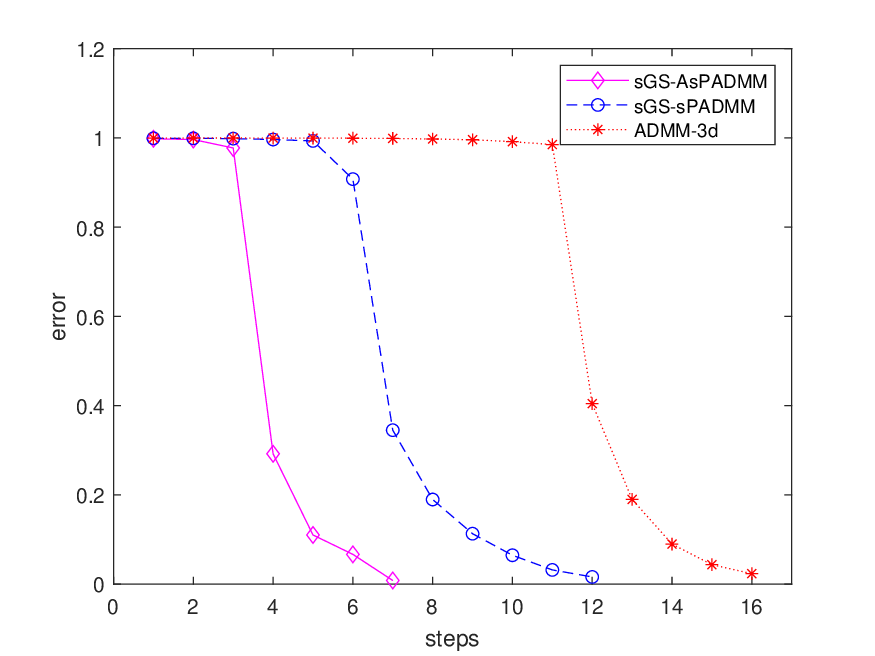}}
\caption{Numerical performance of the  three algorithms for different images}\label{figure1}
\end{figure}

\end{landscape}

\subsection{Application to mix sparse optimization problems}
In this subsection, we consider the following mixed sparse optimization model according to the $\ell_{0}$ and $\ell_{2,0}$ norms:
\begin{align}\label{mix 1}
\underset{x\in \mathbb{R}^{n}}{\min}~\|{\bf A}{\bm x}-b\|^{2}+\lambda_{1}\|{\bm x}\|_{2,0}+
\lambda_{2}\|{\bm x}\|_{0},
\end{align}
where ${\bf A}\in\mathbb{R}^{m\times n}$ is a matrix, $b\in \mathbb{R}^{m}$ is a given vector, $\lambda_{1}$ and $\lambda_{2}$ are two parameters given in advance, 
${\bm x}=({\bm x}_{G_{1}}^{T},{\bm x}_{G_{2}}^{T},\ldots, {\bm x}_{G_{N}}^{T})$ is the target vector to be calculated, 
in which $G_{i}$ denotes the index set of a partial vectors in ${\bm x}$.
Moreover, $\|\cdot\|_{2,0}$ and $\|\cdot\|_{0}$ are defined as $\|{\bm x}\|_{2,0}:=\sum_{i=1}^{N}(\|{\bm x}_{G_{i}}\|_{1})^{0}$ and $ \|{\bm x}\|_{0}:=\sum_{i=1}^{n}
| {\bm x}| ^{0}$, respectively, with the convention that $0^{0}=0$.

Let ${\bm y}_{i}:=\|{\bm x}_{G_{i}}\|_{1}$, then ${\bm y}=({\bm y}_{1},{\bm y}_{2},\ldots,{\bm y}_{N})$. 
Define $\bm{B}\in \mathbb{R}^{N\times n}$ in the sense that the element corresponding to ${\bm x}_{G_{i}}$ in the $i$-th row is $1$, and the rest are zeros.
Then we can convert  \eqref{mix 1} to as follows:
\begin{align}\label{mix 2}
\underset{{\bm x},{\bm y}}{\min}~&\|{\bf A}{\bm x}-b\|^{2}+\lambda_{1}\|{\bm y}\|_{0}+
\lambda_{2}\|{\bm x}\|_{0},\\
\mbox{s.t.} ~&{\bf B}\lvert {\bm x}\rvert ={\bm y}. \notag
\end{align}

By introducing a new variable, we can rewrite $\eqref{mix 2}$ into the following problem,
\begin{equation}
\label{mix 20}
\begin{array}{lll}
\underset{{\bm z},{\bm y}}{\min}~&\|({\bf A},-{\bf A}){\bm z}-b\|^{2}+\lambda_{1}\|{\bm y}\|_{0}+
\lambda_{2}\|{\bm z}\|_{0},\\[1.5mm]
\mbox{s.t.} ~&({\bf B},-{\bf B}){\bm z}={\bm y}, ~{\bm z}\geq 0.
\end{array}
\end{equation}

\begin{proposition}
The problem \eqref{mix 2} is equivalent to the problem \eqref{mix 20}.
\end{proposition}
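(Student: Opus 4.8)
The plan is to set up an explicit, value-preserving correspondence between the feasible points of \eqref{mix 2} and those of \eqref{mix 20}, built on the positive/negative decomposition of the original variable. I would write the lifted variable as $\bm z=(\bm u,\bm v)$ with $\bm u,\bm v\ge 0$ and identify it with $\bm x:=\bm u-\bm v$; the sign restriction $\bm z\ge 0$ is precisely what lets \eqref{mix 20} encode the nonsmooth quantity $|\bm x|$ appearing in \eqref{mix 2}. Two elementary facts will carry the argument: the splitting $\bm x=\bm u-\bm v$ together with the componentwise domination $|\bm x|\le\bm u+\bm v$, which holds with equality exactly when $\bm u$ and $\bm v$ have disjoint supports, in which case $\bm u=\bm x_+$ and $\bm v=\bm x_-$.

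First I would show that the optimal value of \eqref{mix 20} is at most that of \eqref{mix 2} by a direct lift. Given any feasible $(\bm x,\bm y)$ for \eqref{mix 2}, set $\bm u:=\bm x_+$, $\bm v:=\bm x_-$ and $\bm z:=(\bm u,\bm v)\ge 0$. The data-fitting term is unchanged because $(\mathbf A,-\mathbf A)\bm z=\mathbf A(\bm u-\bm v)=\mathbf A\bm x$; the group constraint of \eqref{mix 20}, together with $\bm z\ge 0$ and $\bm u+\bm v=|\bm x|$, is exactly $\mathbf B|\bm x|=\bm y$, so the same $\bm y$ stays feasible; and since $\bm x_+$ and $\bm x_-$ have disjoint supports, $\|\bm z\|_0=\|\bm u\|_0+\|\bm v\|_0=\|\bm x\|_0$. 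Hence the two objectives coincide exactly, giving the inequality.

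Next I would establish the reverse inequality by projecting. Taking any feasible $(\bm z,\bm y)$ of \eqref{mix 20} with $\bm z=(\bm u,\bm v)$, set $\bm x:=\bm u-\bm v$ and $\bm y':=\mathbf B|\bm x|$, so that $(\bm x,\bm y')$ is feasible for \eqref{mix 2}. The data term again matches through $\mathbf A(\bm u-\bm v)$, while both sparsity penalties can only decrease: subadditivity of the counting functional gives $\|\bm x\|_0=\|\bm u-\bm v\|_0\le\|\bm u\|_0+\|\bm v\|_0=\|\bm z\|_0$, and the bound $|\bm x|\le\bm u+\bm v$ yields $\mathrm{supp}(\mathbf B|\bm x|)\subseteq\mathrm{supp}(\bm y)$, whence $\|\bm y'\|_0\le\|\bm y\|_0$. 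Combining the two inequalities forces equality of the optimal values, and the lift from the previous step turns any minimizer of one problem into a minimizer of the other, which is the equivalence asserted.

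The hard part will be the $\ell_0$ bookkeeping in the backward step: I must guarantee that collapsing an arbitrary nonnegative pair $(\bm u,\bm v)$ onto the genuine positive/negative split never inflates either the term $\lambda_2\|\cdot\|_0$ or the group term $\lambda_1\|\bm y\|_0$. Both are governed by the pair of counting inequalities above, namely subadditivity of $\|\cdot\|_0$ and the support containment coming from $|\bm u-\bm v|\le\bm u+\bm v$, so the lifted reformulation neither manufactures spurious sparsity nor loses the group structure. Everything else reduces to routine substitutions once these two inequalities are in place.
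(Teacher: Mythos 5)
Your argument is correct and, although it rests on the same positive--negative splitting $\bm{z}=(\bm{x}_{+};\bm{x}_{-})$ that the paper uses, it is organized genuinely differently and is in fact tighter. The paper proves both directions by contradiction, mapping an optimal point of one problem into the other and asserting \emph{exact} value preservation along the way; this implicitly assumes that an arbitrary optimal $\bm{z}^{*}=(\bm{z}_{1}^{*},\bm{z}_{2}^{*})$ of \eqref{mix 20} has disjointly supported halves, so that $\|\bm{z}_{1}^{*}-\bm{z}_{2}^{*}\|_{0}=\|\bm{z}^{*}\|_{0}$ and the projected pair $(\bm{z}_{1}^{*}-\bm{z}_{2}^{*},\bm{y}^{*})$ remains \emph{feasible} for \eqref{mix 2} -- neither of which is automatic, since nothing in the constraints forces complementarity of $\bm{u}$ and $\bm{v}$, and the paper never verifies feasibility of the projected point. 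Your direct two-sided value comparison sidesteps both issues: in the backward direction you use only the one-sided counting inequalities $\|\bm{u}-\bm{v}\|_{0}\le\|\bm{z}\|_{0}$ and $\mathrm{supp}({\bf B}|\bm{x}|)\subseteq\mathrm{supp}(\bm{y})$ (from $|\bm{u}-\bm{v}|\le\bm{u}+\bm{v}$), and you restore feasibility by redefining $\bm{y}':={\bf B}|\bm{x}|$ instead of carrying $\bm{y}$ along; combined with the exactly value-preserving lift $\bm{u}=\bm{x}_{+}$, $\bm{v}=\bm{x}_{-}$, this yields equality of optimal values and the correspondence of minimizers cleanly. What the paper's symmetric contradiction argument buys is brevity; what yours buys is coverage of feasible points with overlapping supports, which closes an actual gap in the published proof. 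One shared caveat: both your proof and the paper's really use that the coupling constraint of \eqref{mix 20} encodes ${\bf B}|\bm{x}|=\bm{y}$, i.e.\ $({\bf B},{\bf B})\bm{z}=\bm{y}$; with the literal $({\bf B},-{\bf B})\bm{z}=\bm{y}$ one only gets ${\bf B}(\bm{u}-\bm{v})={\bf B}\bm{x}$, under which your support containment (and the paper's identity $({\bf B},-{\bf B})\hat{\bm{z}}={\bf B}|\hat{\bm{x}}|$) fails whenever a group exhibits sign cancellation, e.g.\ $\bm{x}_{G_{i}}=(1,-1)$, so the displayed minus sign must be read as a typographical slip for the equivalence to hold at all.
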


\begin{proof}
We first prove that any solution to \eqref{mix 20} can be a solution to \eqref{mix 2}. Let $({\bm z}^{*},{\bm y}^{*})$ be a solution of \eqref{mix 20}. Dividing ${\bm z}^{*}$ into $({\bm z}^{*}_{1},{\bm z}^{*}_{2})$, we assert that $({\bm z}^{*}_{1}-{\bm z}^{*}_{2},{\bm y}^{*})$ is a solution to \eqref{mix 2}. Otherwise, there is a solution $(\hat{{\bm x}},\hat{{\bm y}})$ of \eqref{mix 2} such that ${\bf B}|\hat{{\bm x}}|=\hat{{\bm y}}$ and 
$$
\|{\bf A}\hat{{\bm x}}-b\|^{2}+\lambda_{1}\|\hat{{\bm y}}\|_{0}+
\lambda_{2}\|\hat{{\bm x}}\|_{0}<\|{\bf A}({\bm z}_{1}^{*}-{\bm z}_{2}^{*})b\|^{2}+\lambda_{1}\|{\bm y}^{*}\|_{0}+
\lambda_{2}\|({\bm z}_{1}^{*}-{\bm z}_{2}^{*})\|_{0}.
$$
Dividing $\hat{{\bm x}}$ as $\hat{{\bm x}}_{1}-\hat{{\bm x}}_{2}$, where $\hat{{\bm x}}_{1}\geq 0$ and $\hat{{\bm x}}_{2}\geq 0$. Define $\hat{{\bm z}}:=(\hat{{\bm x}}_{1};\hat{{\bm x}}_{2})$, then we have $({\bf B},-{\bf B})\hat{{\bm z}}={\bf B}|\hat{{\bm x}}|=\hat{{\bm y}}$, $\hat{{\bm z}}\geq 0$, and
\begin{equation*}
\begin{array}{lll}
&\|({\bf A},-{\bf A})\hat{{\bm z}}-b\|^{2}+\lambda_{1}\|\hat{{\bm y}}\|_{0}+
\lambda_{2}\|\hat{{\bm z}}\|_{0}=
\|{\bf A}\hat{{\bm x}}-b\|^{2}+\lambda_{1}\|\hat{{\bm y}}\|_{0}+
\lambda_{2}\|\hat{{\bm x}}\|_{0}\\[1.5mm]
<& \|{\bf A}({\bm z}_{1}^{*}-{\bm z}_{2}^{*})b\|^{2}+\lambda_{1}\|{\bm y}^{*}\|_{0}+
\lambda_{2}\|({\bm z}_{1}^{*}-{\bm z}_{2}^{*})\|_{0}\\[1.5mm]
=&
\|({\bf A},-{\bf A}){\bm z}^{*}-b\|^{2}+\lambda_{1}\|{\bm y}^{*}\|_{0}+
\lambda_{2}\|{\bm z}^{*}\|_{0},
\end{array}
\end{equation*}
which contradicts the result that
$({\bm z}^{*},{\bm y}^{*})$ is a solution of \eqref{mix 20}. 

Moreover, we prove that any solution to \eqref{mix 2} can be a solution to \eqref{mix 20}. Let $({\bm x}^{*},{\bm y}^{*})$ be a solution of \eqref{mix 2}. Dividing ${\bm x}^{*}$ into ${\bm x}^{*}={\bm x}_{1}^{*}-{\bm x}_{2}^{*}$, where ${\bm x}^{*}_{1}\geq 0$ and ${\bm x}^{*}_{2}\geq 0$.
We assert that $\big(({\bm x}_{1}^{*};{\bm x}_{2}^{*}),{\bm y}^{*})$ is a solution to \eqref{mix 20}. Or, there is a solution $(\hat{{\bm z}}^{*},\hat{{\bm y}}^{*}):=\big((\hat{{\bm z}}^{*}_{1};\hat{{\bm z}}^{*}_{2}),\hat{{\bm y}}^{*})$ of \eqref{mix 20}, such that $ ({\bf B},-{\bf B})\hat{{\bm z}}^{*}=\hat{{\bm y}}^{*}, ~\hat{{\bm z}}^{*}\geq 0$, and 
\begin{equation*}
\begin{array}{lll}
&\|{\bf A}(\hat{{\bm z}}^{*}_{1}-\hat{{\bm z}}^{*}_{2})-b\|^{2}+\lambda_{1}\|\hat{{\bm y}}^{*}\|_{0}+
\lambda_{2}\|\hat{{\bm z}}^{*}_{1}-\hat{{\bm z}}^{*}_{2}\|_{0}\\[1.5mm]
=&
\|({\bf A},-{\bf A})\hat{{\bm z}}^{*}-b\|^{2}+\lambda_{1}\|\hat{{\bm y}}^{*}\|_{0}+
\lambda_{2}\|\hat{{\bm z}}^{*}\|_{0}\\[1.5mm]
<& \|({\bf A},-{\bf A})({\bm x}_{1}^{*};{\bm x}_{2}^{*})-b\|^{2}+\lambda_{1}\|{\bm y}^{*}\|_{0}+
\lambda_{2}\|({\bm x}_{1}^{*};{\bm x}_{2}^{*})\|_{0}\\[1.5mm]
=&\|{\bf A}{\bm x}^{*}-b\|^{2}+\lambda_{1}\|{\bm y}^{*}\|_{0}+
\lambda_{2}\|{\bm x}^{*}\|_{0}
,
\end{array}
\end{equation*}
which contradicts the result that
$({\bm x}^{*},{\bm y}^{*})$ is a solution of \eqref{mix 2}. 

\end{proof}

Let $\Phi$ be the family of closed proper convex functions
$\phi:\mathbb{R} \to ( - \infty , +\infty]$ that satisfy $[0, 1] \subseteq {\rm int}({\rm dom}\,\phi )$, $\phi (1) = 1$ and $\phi (t^{*}_{\phi}) = 0$, where $t^{*}_{\phi}$
is the unique minimizer of $\phi$ over $[0, 1]$. 
Let $\bm{e}_{n}$ be the $n$-dimensional vector of all ones. 
Then, using the variational properties  of zero norm in the literature  \cite{liu2018equivalent}, the above problem \eqref{mix 20} can be written as
\begin{equation}
\label{mix 3}
\begin{array}{lll}
\underset{{\bm z},{\bm y}}{\min}~&\|({\bf A},-{\bf A}){\bm z}-b\|^{2}+\lambda_{1}\underset{{\bm w}}{\min}\{\sum_
{i=1}^{N}\hat{f}({\bm w}_{i})\}+\lambda_{2}\underset{{\bm v}}{\min}\{\sum_{j=1}^{2n}\hat{g}({\bm v}_{j})
\},\\[1.5mm]
\mbox{s.t.} ~&({\bf B},-{\bf B}){\bm z}={\bm y}, \langle {\bm e}_{N}-{\bm w},| {\bm y}|\rangle=0, 0\leq {\bm w}\leq {\bm e}_{N},\\[1.5mm]
&\langle {\bm e}_{2n}-{\bm v},| {\bm z}|\rangle=0, 0\leq {\bm v}\leq {\bm e}_{2n},{\bm z}\geq 0, 
\end{array}
\end{equation}
where $\hat{f}$, $ \hat{g}\in \Phi$.
For the given $\hat{f}$, $ \hat{g}\in \Phi$, define
\begin{flalign*}
    \quad\quad\quad\quad\quad& \ f_{0}(x):=
    \left\{
        \begin{array}{l}
            \hat{f}(x)~~~x\in [0,1],\\  +\infty\quad~\text{otherwise;}
        \end{array}
    \right.
    & \ &g_{0}(x):=
    \left\{
        \begin{array}{l}
            \hat{g}(x)~~~x\in [0,1],\\  +\infty\quad~\text{otherwise.}
        \end{array}
    \right.
    & &&
\end{flalign*}
By penalizing last two equality constraints in \eqref{mix 3} with two penalty parameters $\rho_{1}>0$ and $\rho_{2}>0$, the resulting problem can be transformed to 
\begin{equation}
\label{mix 4}
\begin{array}{rll}
\underset{{\bm z},{\bm y}}{\min}~&\|({\bf A},-{\bf A}){\bm z}-b\|^{2}+\rho_{1}\|{\bm y}\|_{1}-\lambda_{1}\sum_{i=1}^{N}f_{0}^{*}
(\frac{\rho_{1}}{\lambda_{1}}{\bm y}_{i})\\[1.5mm]
&+\rho_{2}\|{\bm z}\|_{1}-\lambda_{2}\sum_{j=1}^{2n}g_{0}^{*}(
\frac{\rho_{2}}{\lambda_{2}}|{\bm z}_{j}|),\\[1.5mm]
\mbox{s.t.} ~&({\bf B},-{\bf B}){\bm z}={\bm y}, {\bm z}\geq 0, 
\end{array}
\end{equation}
where $f_{0}^{*}$ and $g_{0}^{*}$ are the conjugate functions of $f_{0}$ and $g_{0}$, respectively.
More specifically, we can take $\hat{f}(\chi)=\hat{g}(\chi):=\frac{a^{2}}{4}\chi^{2}-\frac{a^{2}}{2}\chi+a\chi+\frac{(a-2)
^{2}_{+}}{4}$, where $a\geq 2$ and $(a-2)_{+}=\max\{a-2,0\}$. 
Then we can get the expressions of $f_{0}^{*}(\cdot) $ and $g_{0}^{*}(\cdot)$ as follows,
$$
f_{0}^{*}(\breve{\chi})=g_{0}^{*}(\breve{\chi})=
\begin{cases}
 -\frac{(a-2)_{+}^{2}}{4}, &\breve{\chi}\leq \frac{2a-a^{2}}{2},\\
 \frac{1}{a^{2}}(\frac{a^{2}-2a}{2}+\breve{\chi})^{2}-\frac{(a-2)^{2}_{+}}{4}, & \frac{2a-a^{2}}{2}\leq \breve{\chi}<a,\\
 s-1, & \breve{\chi}\geq a.
\end{cases}
$$
Thus we have
$$
|\breve{\chi}|-f_{0}^{*}(|\breve{\chi}|)=|\breve{\chi}|-g_{0}^{*}(|s|)=
\begin{cases}
-\frac{2|\breve{\chi}|}{a}-\frac{\breve{\chi}^{2}}{a^{2}}, & |\breve{\chi}|\leq a,\\
1, &|\breve{\chi}|\geq a.
\end{cases}
$$
When using the PMM algorithm to solve \eqref{mix 4}, the subproblem at the $(t+1)$-th iteration takes the following form 
\begin{equation*}
\begin{array}{rll}
\underset{{\bm z},{\bm y}}{\min}~~\{F({\bm z},{\bm y};{\bm z}^{t},{\bm y}^{t})&:=\|({\bf A},-{\bf A}){\bm z}-b\|^{2}+\rho_{1}\|
{\bm y}\|_{1}+\rho_{2}\|{\bm z}\|_{1}-\lambda_{1}(H_{1}({\bm y}^{t})
\\[1.5mm]
&
+\langle \nabla H_{1}({\bm y}^{t}),{\bm y}-{\bm y}^{t}
\rangle)
+\frac{\eta}{2}\|{\bm y}-{\bm y}^{t}\|^{2}
-\lambda_{2}(H_{2}({\bm z}^{t})
\\[1.5mm]
&
+\langle \nabla H_{2}({\bm z}^{t}),{\bm z}-{\bm z}^{t}\rangle)
+\frac{\eta}{2}\|{\bm z}-{\bm z}^{t}\|^{2}+
\delta_{\Gamma}({\bm z},{\bm y})\},
\end{array}
\end{equation*}
where $\Gamma:=\{(\bm{z},\bm{y})~|~({\bf B},-{\bf B}){\bm z}={\bm y}, {\bm z}\geq 0\}$, $H_{1}({\bm y}):=\sum_{i=1}^{N}f_{0}^{*}(\frac{\rho_{1}}{\lambda_{1}}{\bm y_{i}})$, $H_{2}({\bm z}):=\sum_{j=1}^{2n}g_{0}^{*}(\frac{\rho_{2}}{\lambda_{2}}|{\bm z_{j}}|)$, 
and $({\bm z}^t,{\bm y}^t)$ is the solution of the subproblem at the $t$-th iteration.
The above problem  can be rewritten as 
\begin{equation}
\label{mix 7}
\begin{array}{rll}
\underset{{\bm s},{\bm y},{\bm z}}{\min}~~&\rho_{1}\|
{\bm y}\|_{1}-\lambda_{1}(H_{1}({\bm y}^{t})+\langle \nabla H_{1}({\bm y}^{t}),{\bm y}-{\bm y}^{t}
\rangle)+\frac{\eta}{2}\|{\bm y}-{\bm y}^{t}\|^{2}
+\rho_{2}\|{\bm z}\|_{1}\\[1.5mm]
&-\lambda_{2}(H_{2}({\bm z}^{t})+\langle \nabla H_{2}({\bm z}^{t}),{\bm z}-{\bm z}^{t}\rangle)+\frac{\eta}{2}\|{\bm z}-{\bm z}^{t}\|^{2}+\|({\bf A},-{\bf A}){\bm s}-b\|^{2},\\[1.5mm]
\mbox{s.t.} ~&~({\bf B},-{\bf B}){\bm s}={\bm y}, {\bm z}={\bm s},{\bm s}\geq 0.
\end{array}
\end{equation}
The augmented Lagrangian function of problem \eqref{mix 7} is
\begin{equation*}
\begin{array}{rll}
\mathcal{L}_{\beta}({\bm s},{\bm y},{\bm z},\mu,\hat{\mu}):&= \rho_{1}\|
{\bm y}\|_{1}-\lambda_{1}(H_{1}({\bm y}^{t})+\langle \nabla H_{1}({\bm y}^{t}),{\bm y}-{\bm y}^{t}
\rangle)+\frac{\eta}{2}\|{\bm y}-{\bm y}^{t}\|^{2}
\\[1.5mm]
&
+\rho_{2}\|{\bm z}\|_{1}-\lambda_{2}(H_{2}({\bm z}^{t})+\langle \nabla H_{2}({\bm z}^{t}),{\bm z}-{\bm z}^{t}\rangle)+\frac{\eta}{2}\|{\bm z}-{\bm z}^{t}\|^{2}
\\[1.5mm]
&+\frac{1}{2}\|({\bf A},-{\bf A}){\bm s}-b\|^{2}+\langle \mu,({\bf B},-{\bf B}){\bm s}-{\bm y}\rangle\\[1.5mm]
&+\frac{\beta}{2}\|({\bf B},-{\bf B}){\bm s}-{\bm y}\|^{2}+\langle \hat{\mu},{\bm z}-{\bm s}\rangle+\frac{\beta}{2}\|{\bm z}-{\bm s}\|^{2}+\delta_{\Gamma_{1}}(s)\},
\end{array}
\end{equation*}
where $\Gamma_{1}:=\{{\bm s}~|~{\bm s}\geq 0\}$.

For the above variable separation problem \eqref{mix 7}, 
we can solve it directly using the following sGS-AsPADMM algorithm (without adding the proximal terms). 
\begin{algorithm}
\caption{An sGS-AsPADMM algorithm for solving problem \eqref{mix 7}}\label{alg9}
\KwIn{
${\bm s}^{0},{\bm y}^{0},{\bm z}^{0},\mu^{0},\hat{\mu}^{0},\beta>0,\tau \in (0,1),\theta^{0}=1$.}
\KwOut{$\{({\bm s}^{k+1},{\bm y}^{k+1},{\bm z}^{k+1})\}$}
\For{$k=0,1, \ldots$,}{
1. $\theta^{k}:=
\frac{\theta^{k-1}}{\theta^{k-1}(1-\tau)+1}$, $v^{k}:={\bm y}^{k}+\frac{\theta^{k}
(1-\theta^{k-1})}{\theta^{k-1}}({\bm y}^{k}-{\bm y}^{k-1});$\\
2. $
{\bm s}^{k+\frac{1}{2}}:=\underset{{\bm s}}{{\rm argmin}}\, \mathcal{L}_\frac{\beta}{\theta^{k}}({\bm s},v^{k},{\bm z}^{k},\mu^{k},\hat{\mu}^{k});$\\
3. $
{\bm z}^{k+1}:=\underset{{\bm z}}{{\rm argmin}}\, \mathcal{L}_\frac{\beta}{\theta^{k}}({\bm s}^{k+\frac{1}{2}},v^{k},{\bm z},\mu^{k},\hat{\mu}^{k});$\\
4. $
{\bm s}^{k+1}:=\underset{{\bm s}}{{\rm argmin}}\, \mathcal{L}_\frac{\beta}{\theta^{k}}({\bm s},v^{k},{\bm z}^{k+1},\mu^{k},\hat{\mu}^{k});$\\
5. $
{\bm y}^{k+1}:=\underset{{\bm y}}{{\rm argmin}}\, \mathcal{L}_\frac{\beta}{\theta^{k}}({\bm s}^{k+1},{\bm y},{\bm z}^{k+1},\mu^{k},\hat{\mu}^{k});$\\
6. $\mu^{k+1}:=\mu^{k}+\tau\beta({\bm s}^{k+1}-{\bm z}^{k+1}),~~ \hat{\mu}^{k+1}:=\hat{\mu}^{k}+\tau\beta({\bm y}^{k+1}-({\bf B},-{\bf B}){\bm s}^{k+1});$}
\end{algorithm}

\begin{remark}
In Algorithm \ref{alg9}, the subproblem for computing ${\bm s}^{k+1}$
can be achieved by letting the gradient be zero, and the subproblems for getting ${\bm z}^{k+1}$ and ${\bm y}^{k+1}$ are just a simple soft shrinkage thresholding operation.  
Moreover, regarding Algorithm \ref{alg9} as Algorithm \ref{alg4} and using the terminology for the latter, we can set $\tau\in(\frac{4\lambda+\lambda_{{\bf A}^{*}{\bf A}}-\sqrt{(2\lambda+\lambda_{{\bf A}^{*}{\bf A}})^{2}+2\lambda\eta}}{2\lambda},1]$, so that $\Sigma_{f}:=\eta I \succeq \Xi_{f}$, where $\lambda_{{\bf A}^{*}{\bf A}}$  is the maximum eigenvalue of ${\bf A}^{*}{\bf A}$, and 
$$
\begin{array}{lll}
\Xi_{f}:=&2(1-\tau)\lambda\begin{pmatrix} \big(\lambda I+\lambda({\bf B},-{\bf B})^{*}({\bf B},-{\bf B})+{\bf A}^{*}{\bf A}\big)^{-1}{\bf A}^{*}{\bf A} & 0\\ 0 & 0 \end{pmatrix}\\[1.5mm]
&+2(1-\tau)(3-\tau)
\lambda\begin{pmatrix} \big(I+({\bf B},-{\bf B})^{*}({\bf B},-{\bf B})\big)^{-1} & 0\\ 0 & 0 \end{pmatrix}.
\end{array}   
$$
Therefore, Theorem \ref{convergence-multi} can be applied to guarantee the $O(1/ K)$ non-ergodic convergence rates of both the objective and the feasibility of Algorithm \ref{alg9}. 
\end{remark}

Next, we will compare the numerical performance of sGS-sPADMM and sGS-AsPADMM in the mix sparse optimization problem. We use the same strategy and only compare the performance of two algorithms to solve problems $\eqref{mix 7}$. The dual problem of the problem $\eqref{mix 7}$ is
\begin{equation*}
\begin{array}{lll}
\underset{\mu,\hat{\mu}}{\max}~~&\rho_{1}\|\hat{{\bm y}}\|_{1}-\lambda_{1}\langle \nabla H_{1}({\bm y}^{t}),\hat{{\bm y}}-{\bm y}^{t}\rangle+\frac{\eta}{2}\|\hat{{\bm y}}-{\bm y}^{t}\|^{2}
+\rho_{2}\|\hat{{\bm z}}\|_{1}\\[1.5mm]
&-\lambda_{2}\langle \nabla H_{2}({\bm z}^{t}),\hat{{\bm z}}-{\bm z}^{t}\rangle+\frac{\eta}{2}
\|\hat{{\bm z}}-{\bm z}^{t}\|^{2}+\|({\bf A},-{\bf A})\hat{{\bm s}}-b\|^{2}g\\[1.5mm]
&+\langle \mu,\hat{{\bm s}}-\hat{{\bm z}}\rangle+\langle \hat{\mu},\hat{{\bm y}}-({\bf B},-{\bf B})\hat{{\bm s}}\rangle,
\end{array}
\end{equation*}
where $\hat{{\bm z}}$ and $\hat{{\bm y}}$ are the solutions of $\underset{{\bm z}}{\min}~\{\rho_{2}\|{\bm z}\|_{1}+\frac{\eta}{2}\|{\bm z}-{\bm z}^{t}-\frac
{\mu}{\eta}-\frac{\lambda_{2}}{\eta}\nabla H_{2}({\bm z}^{t})\|^{2}\}$ and $\underset{{\bm y}}{\min}~\{\rho_{1}\|{\bm y}\|_{1}+\frac{\eta}{2}\|{\bm y}-{\bm y}^{t}-\frac
{\hat{\mu}}{\eta}-\frac{\lambda_{1}}{\eta}\nabla H_{1}({\bm y}^{t})\|^{2}\}$, respectively. In addition, $\hat{{\bm s}}$ is the solution of $\underset{{\bm s}\in\Gamma_{1}}{\min}~\|({\bf A},-{\bf A})\hat{{\bm s}}-b\|^{2}+\langle \mu,\hat{{\bm s}}\rangle-\langle \hat{\mu},({\bf B},-{\bf B})\hat{{\bm s}}\rangle$.

In the numerical experiments, the simulation data is generated randomly.  In detail, we randomly generate a matrix ${\bf A}\in \mathbb{R}^{m\times n}$
and a mixed sparse solution ${\bm x}^{*}\in \mathbb{R}^{n}$ by randomly splitting its components into $N$ equisize groups and randomly selecting $S$ of them as non-zero groups, each of which is set as a $r$ sparse vector with a Gaussian ensemble.
Consequently, the inter-group sparsity of the solution is $\frac{S}{N}$, and the intra-group sparsity is $\frac{rN}{n}$. 
The vector obtained is the desired mixed sparse solution. In addition, we generate observational data $b$ as follows,
$$b:={\bf A}\bm{x}^{*}+\varepsilon \times \text{rand}(m,1),$$ where $\varepsilon$ is the standard deviation of additive Gaussian noise, and we set it as $0.1\%$. 

In our numerical experiment, we compare the performance of Algorithms \ref{alg1} and Algorithms \ref{alg9} and qualify the approximate solutions by the duality gap and the primal infeasibility, i.e., 
$$
\varepsilon_{\rm gap}:=\frac{\mid {\rm pobj}-{\rm dobj}\mid}{1+\mid {\rm pobj}\mid +\mid {\rm dobj}\mid},
\quad\varepsilon_{p1}:=\frac{\|\bm{z}-
\bm{s}\|}{1+\|\bm{z}\|+\| \bm{s}\|} ,
$$
and $\varepsilon_{p2}:=\frac{\|(\bf{B,\bf{-B}})\bm{s}-
\bm{y}\|}{1+\|(\bf{B,\bf{-B}})\bm{s}\|+\| \bm{y}\|}$,
where ``pobj'' and ``dobj'' are the primal and dual objective values, respectively. 
We terminate the algorithms if ${\rm error}:=\max\{\varepsilon_{\rm gap},\varepsilon_{p1},\varepsilon_{p2}\}$ is less than or equal
to a certain threshold $\varepsilon:=1e-6$, or the number of iterations is more than 200.

For the various parameters in problem $\eqref{mix 7}$, we set the regularization parameters $(\lambda_{1},\lambda_{2})$ as $(10^{-4},10^{-6})$, and let the penalty parameters $(\rho_{1},\rho_{2}):=(3.3\times10^{-5},3\times10^{-6})$, $\beta=0.05$, $\tau=0.99$. 
In concrete practice, we first use the $\ell_{2,1}$ and $\ell_{1}$ regularization models to perform a low-precision hot start on randomly generated ${\bf A}$ and $b$, and then use the obtained ${\bf A}$ and $b$ as the initial ${\bf A}$ and $b$. 

We compare the error decline curves of the sGS-sPADMM algorithm and the sGS-AsPADMM algorithm, with ${\bf A}$ in different dimensions and solution ${\bm x}$ with different degrees of sparsity as initial conditions. The resulting error decline curves are shown in Figure \ref{figure2}. The two images in the first row are the cases where the sparsity in the group is 0.375, the all-zero group is set to one, and the sparsity within the group is 0.5, the all-zero group is set to two, respectively. In addition, the matrix ${\bf A}$ satisfies ${\bf A}\in\mathbb{R}^{32\times 128}$ in both cases. The two images in the second row are the cases where the sparsity in the group is 0.375, the all-zero group is set to two, and the sparsity within the group is 0.5, the all-zero group is set to four, respectively. Otherwise, the matrix ${\bf A}$ satisfies ${\bf A}\in\mathbb{R}^{64\times 256}$ in both cases.
The results in Figure \ref{figure2} further verify the acceleration of the sGS-AsPADMM algorithm from the sGS-sPADMM algorithm.

\begin{figure}
\centering
\subfigure[case 1]{
\includegraphics[width=0.48\textwidth]{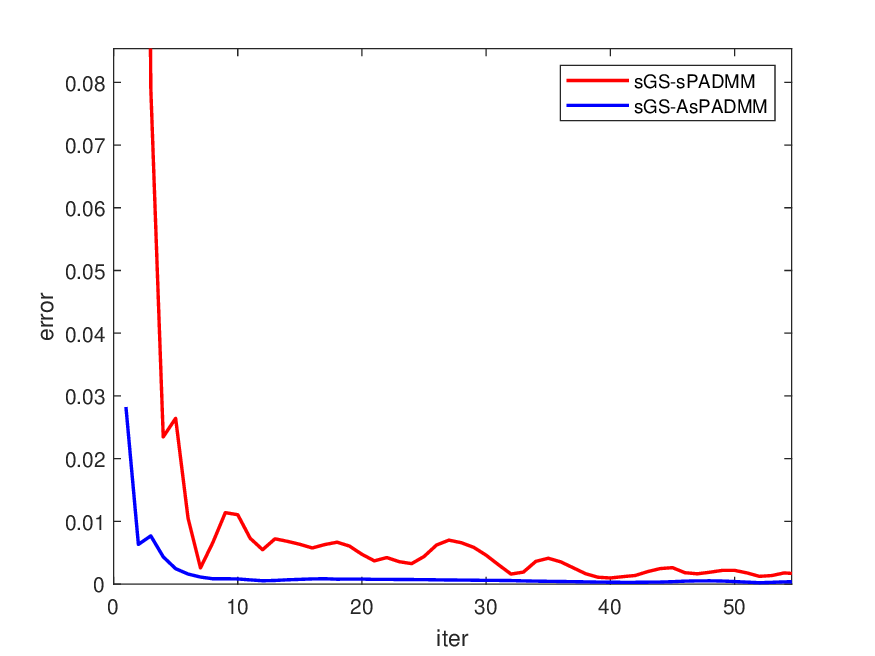}}
\subfigure[case 2]{
\includegraphics[width=0.48\textwidth]{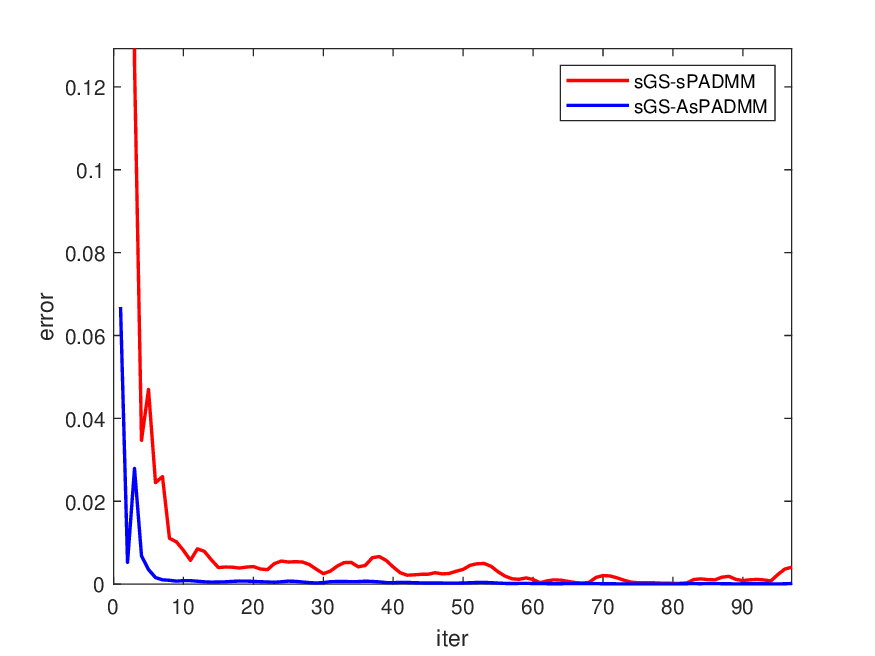} }
\subfigure[case 3]{
\includegraphics[width=0.48\textwidth]{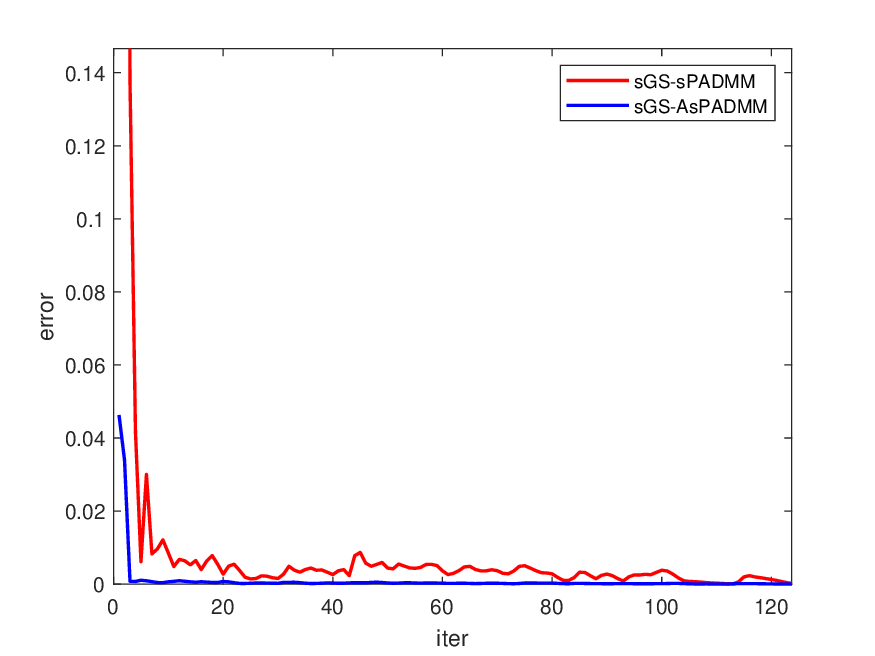}}
\subfigure[case 4]{
\includegraphics[width=0.48\textwidth]{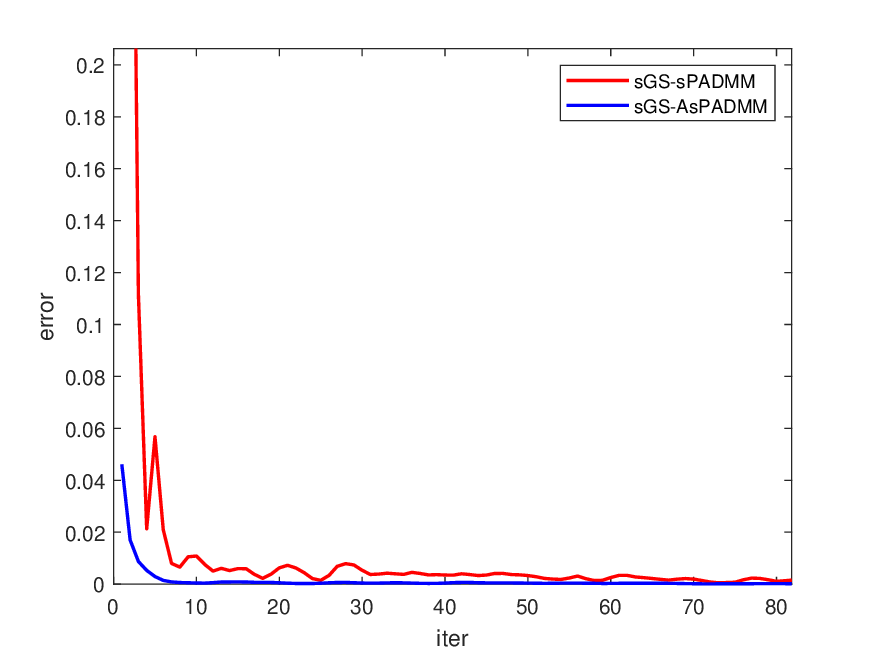}}
\caption{Numerical performance of the two algorithms for different sparsity, where the horizontal axis is the number of iteration steps, and the vertical axis is the iteration error.}\label{figure2}
\end{figure}

\subsection{Application to Lasso problem}
In this subsection, we consider the following Lasso \cite{boyd2011dis} problem 
\begin{equation}
\label{ppro2}
\begin{array}{lll}
\underset{{\bm x}\in\mathbb{R}^{n}}{\min}
~ \frac{1}{2}\|{\bf A}\bm{x}-{\bm b}\|^{2}_{2}+\lambda\|\bm{x}\|_{1},
\end{array}
\end{equation}
 where $\lambda> 0$ is a scalar regularization parameter, $\bf{A}\in\mathbb{R}^{m\times n}$ and $\bm{b}\in\mathbb{R}^m$.
By introducing a new variable $\bm{z}\in\mathbb{R}^n$, problem \eqref{ppro2} becomes
\begin{equation*}
\begin{array}{rll}
\underset{{\bm x},{\bm z}\in\mathbb{R}^{n}}{\min}
&\frac{1}{2}\|{\bf A}\bm{x}-{\bm b}\|^{2}_{2}+\lambda\|\bm{z}\|_{1},\\[1.5mm]
\rm{s.t.}~~&\bm{x}-\bm{z}=0.
\end{array}
\end{equation*}
We use Algorithm \ref{alg2} to solve the above problem \eqref{ppro2}. 
For the proximal terms, we set $\cS:=\lambda_{{\rm max}}I-{\bf A}^{*}{\bf A}$, where $\lambda_{{\rm max}}$ is the maximum eigenvalue of ${\bf A}^{*}{\bf A}$. 
In this way, one has $${\bm x}^{k+1}=\frac{1}{\lambda_{{\rm max}}+\beta}({\bf A}^{*} {\bm b}+\beta \hat{\bm{z}}^{k}+\cS \bm{x}^{k}-{\bm \mu}^{k}).$$ 
Moreover, we set $\cT=0$, so there is an explicit solution for the subproblem about ${\bm z}^{k+1}$.
The resulting algorithm is given as Algorithm \ref{alg10}. 

\begin{algorithm}
\caption{AsPADMM algorithm for solving problem \eqref{ppro2}}
\label{alg10}
\KwIn{
${\bm x}^{0},{\bm z}^{0},{\bm \mu}^{0},\beta>0,\tau \in (0,1),\theta^{0}=1$.}
\KwOut{$\{({\bm x}^{k+1},{\bm z}^{k+1},\mu^{k+1})\}$}
\For{$k=0,1, \ldots$,}{
1. $\theta^{k}=
\frac{\theta^{k-1}}{\theta^{k-1}(1-\tau)+1}$, $\hat{\bm{z}}^{k}=\bm{z}^{k}+\frac{\theta^{k}(1-\theta^{k-1})}{\theta^{k-1}}(\bm{z}^{k}-\bm{z}^{k-1});$\\
2. $
x^{k+1}:=\frac{1}{\lambda_{{\rm max}}+\beta}({\bf A}^{*} {\bm b}+\beta \hat{\bm{z}}^{k}+\cS \bm{x}^{k}-{\bm \mu}^{k});$\\
3. $
\bm{z}^{k+1}:=\underset{z}{\arg\min}~\lambda\|\bm{z}\|_{1}-\langle {\bm \mu}^{k},\bm{z}\rangle+\frac{\beta}{2}\|\bm{z}-\bm{x}^{k+1}\|^{2};$\\
4. $
{\bm \mu}^{k+1}:={\bm \mu}^{k}+\tau\beta(\bm{x}^{k+1}-\bm{z}^{k+1}).$\\}
\end{algorithm}

To terminate Algorithm \ref{alg10}, we take the same strategy as in \cite{boyd2011dis} as follows.
Let ${\bm r}^{k}:=\bm{z}^{k+1}-\bm{x}^{k+1}$
and ${\bm s}^{k}:={\bf A}^{*}({\bf A}{\bm x} ^{k+1}-{\bm b})+\bm{z}^{k+1}$.
We terminate Algorithm \ref{alg10} if $$
\|{\bm r}^{k}\|\leq \epsilon^{\rm{pri}}~~\text{and}~~\|{\bm s}^{k}\|\leq \epsilon^{\rm{dual}},
$$
where $\epsilon^{\rm{pri}}>0$  and $\epsilon^{\rm{dual}}>0$ are feasibility tolerances, which are chosen using an absolute and relative criterion given by
$$
\epsilon^{\rm{pri}}=\sqrt{n}\epsilon^{\rm{abs}}+\epsilon^{\rm{rel}}{\rm max}\{\|{\bm x}^{k+1}\|, \|{\bm z}^{k+1}\|\}~~\text{and}~~\epsilon^{\rm{dual}}=\sqrt{n}\epsilon^{\rm{abs}}+\epsilon^{\rm{rel}}\|{\bm \mu}^{k+1}\|.
$$
Here, $\epsilon^{\rm{abs}}>0$ is an absolute tolerance, $\epsilon^{\rm{rel}}>0$ is a relative tolerance, and ${\bm \mu}$ is the dual variable. 
In our numerical experiments,
we set $\epsilon^{\rm{abs}}=1e-6$ and $\epsilon^{\rm{rel}}=1e-6$. 
We randomly generate $\bf{A}$ with different dimensions and a sparsity solution $\bm{x}^{*}$. 
In addition, we generate observational data $b$ as follows,
$${\bm b}:={\bf A}\bm{x}^{*}+\varepsilon \times \text{rand}(m,1),$$ where $\varepsilon$ is the standard deviation of the additive Gaussian noise, and we set it as $0.1\%$. We compare the performance of the sPADMM and AsPADMM algorithms, and the results are shown in Table  \ref{table 798}.

\begin{table}
\begin{center}
\begin{tabular}{ |m{2cm}<{\centering}|m{2cm}<{\centering}|m{1cm}<{\centering}
|m{1.5cm}<{\centering}|m{2cm}<{\centering}|m{1cm}<{\centering}| }
 \hline
 \multirow{1}{4em}{dim of $A$} & \multirow{1}{5em}{Algorithm} & \multicolumn{1}{|c|}{iter} & \multicolumn{1}{|c|}{time} &\multicolumn{1}{|c|}{objective}
 \\
 \hline
  \multirow{2}{5.5em}{~~$64\times 1028$}
& sPADMM & 1497 & 0.095764  & 0.47 \\
 \cline{2-5}
& AsPADMM & 932 & 0.076896  & 0.47 \\
\cline{2-5}
 \hline
  \multirow{2}{6em}{~~$128\times 1024$}
& sPADMM & 1987 &0.156382  & 0.85  \\
 \cline{2-5}
& AsPADMM & 1310 & 0.109943  & 0.85\\
\cline{2-5}
 \hline
  \multirow{2}{6em}{~~$128\times 2048$}
& sPADMM & 2432 & 0.340028  & 1.19 \\
 \cline{2-5}
& AsPADMM & 1692 & 0.230931  & 1.19 \\
\cline{2-5}
 \hline
\multirow{2}{6em}{~~$256\times 2048$}
& sPADMM & 2591 & 0.621735  & 1.99 \\
 \cline{2-5}
& AsPADMM & 1826 & 0.407821  & 1.99 \\
\cline{2-5}
 \hline
\end{tabular}
\end{center}
\caption{Comparison of sPADMM and AsPADMM in Lasso problem \eqref{ppro2}.}\label{table 798}
\end{table}

As can be seen from the results in Table \ref{table 798}, AsPADMM also has an apparent acceleration effect for pure convex problems.

\section{Conclusion}
\label{sect 6}
In this paper, based on the observation that the sPADMM admits a ${\cal O}(1/\sqrt{K})$ non-ergodic convergence rate, 
we proposed an accelerated sPADMM with an ${\cal O}(1/K)$ 
non-ergodic convergence rate by using extrapolation techniques and monotonically increasing penalty parameters. 
The proposed algorithm, together with the sGS iteration technique, 
can be applied to solve multi-block problems frequently encountered in practice. 
The resulting multi-block algorithm, 
sGS-AsPADMM, also has a ${\cal O}(1/K)$ convergence rate. 
Numerical experiments were conducted by applying the proposed algorithm to robust low-rank tensor completion problems and mixed sparse optimization problems. 
The corresponding numerical results suggested the effectiveness and verified the acceleration phenomena of the proposed sGS-AsPADMM.

\section*{Declaration}

{\bf Conflict of interest} 
The authors have not disclosed any conflict of interest.

\noindent{\bf  Ethical Approval} Not applicable.


\begin{appendix}
\section{Proof of Theorem \ref{thm1}.  }\label{appendix1}
In this Appendix, we give a detailed proof of Theorem \ref{thm1}. 
Let $\{(x^{k},y^{k},z^{k})\}$ be the sequence generated by Algorithm \ref{alg1} with $\tau\in(0,1]$.
For $k\ge 0$, define  
\begin{equation*}
\begin{array}{lll}
 \hat{\nabla} f(x^{k+1}):&=A^{*} z^{k}-\lambda A^{*}(Ax^{k+1}+By^{k}-c)-\mathcal{S}(x^{k+1}-x^{k})\in \partial f(x^{k+1}),\\[1.5mm]
 \hat{\nabla} g(y^{k+1}):&=B^{*} z^{k}-\lambda B^{*}(Ax^{k+1}+By^{k+1}-c)-\mathcal{T}(y^{k+1}-y^{k})\in \partial g(y^{k+1}).
\end{array}
\end{equation*}
Then, for $ \hat{\nabla} f(x^{k+1})$ and $\hat{\nabla} g(y^{k+1})$ defined above,  by converting $z^{k}$ to $z^{k+1}$, we have
\begin{equation}
\label{df x}
\begin{array}{lll}
&~~\quad\langle \hat{\nabla} f(x^{k+1}),x^{k+1}-x\rangle \\[1.5mm]
&=\langle A^{*} z^{k}-\lambda A^{*}(Ax^{k+1}+By^{k}-c)-
\mathcal{S}^{*}(x^{k+1}-x^{k}),x^{k+1}-x\rangle\\[1.5mm]
&=\langle z^{k+1},A(x^{k+1}-x)\rangle+\lambda(\tau-1)\langle Ax^{k+1}+By^{k+1}-c,A(x^{k+1}-x)\rangle\\[1.5mm]
&\quad+\lambda \langle B(y^{k+1}-y^{k}),A(x^{k+1}-x)\rangle-\langle x^{k+1}-x^{k},\mathcal{S}(x^{k+1}-x)\rangle
\end{array}
 \end{equation}
and
\begin{equation}
\label{dg y}
\begin{array}{lll}
&~~\quad\langle \hat{\nabla} g(y^{k+1}),y^{k+1}-y\rangle \\[1.5mm]
&=\langle B^{*} z^{k}-\lambda B^{*}(Ax^{k+1}+By^{k+1}-c)-\mathcal{T}^{*}(y^{k+1}-y^{k}),
y^{k+1}-y\rangle\\[1.5mm]
&=\langle z^{k+1},B(y^{k+1}-y)\rangle+\lambda(\tau-1)\langle Ax^{k+1}+By^{k+1}-c,B(y^{k+1}-y)\rangle\\[1.5mm]
&\quad-\langle y^{k+1}-y^{k},\mathcal{T}(y^{k+1}-y)\rangle.
\end{array}
\end{equation}
Moreover, the following result holds. 
\begin{lemma}
For sequence $\{(x^{k},y^{k},z^{k})\}$ generated by the Algorithm \ref{alg1}, we have
\begin{equation}
\label{dandiao}
\begin{array}{lll}
&\quad~~\frac{1}{\tau\lambda}\|z^{k+1}-z^{k}\|^{2}
+\lambda \|B(y^{k+1}-y^{k})\|^{2}
+\|x^{k+1}-x^{k}\|^{2}_{\mathcal{S}}+\|y^{k+1}-y^{k}
\|^{2}_{\mathcal{T}}\\[1.5mm]
&\leq
\frac{1}{\tau\lambda}\|z^{k}-z^{k-1}\|^{2}
+\lambda\|B(y^{k}-y^{k-1})\|^{2}
+\|x^{k}-x^{k-1}\|^{2}_{\mathcal{S}}+
\|y^{k}-y^{k-1}\|^{2}_{\mathcal{T}}.
\end{array}
\end{equation}
\end{lemma}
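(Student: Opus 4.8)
The plan is to read \eqref{dandiao} as a one-step monotone decrease of the quantity
\[
\Phi^{k}:=\tfrac{1}{\tau\lambda}\|z^{k}-z^{k-1}\|^{2}+\lambda\|B(y^{k}-y^{k-1})\|^{2}+\|x^{k}-x^{k-1}\|_{\mathcal{S}}^{2}+\|y^{k}-y^{k-1}\|_{\mathcal{T}}^{2},
\]
that is, to show $\Phi^{k+1}\le\Phi^{k}$. The starting point is the monotonicity of the subdifferentials. Since $\hat{\nabla}f(x^{k+1})\in\partial f(x^{k+1})$ and $\hat{\nabla}f(x^{k})\in\partial f(x^{k})$, the second inequality in \eqref{budengshi1} gives $\langle \hat{\nabla}f(x^{k+1})-\hat{\nabla}f(x^{k}),\,x^{k+1}-x^{k}\rangle\ge 0$, and likewise \eqref{budengshi2} gives $\langle \hat{\nabla}g(y^{k+1})-\hat{\nabla}g(y^{k}),\,y^{k+1}-y^{k}\rangle\ge 0$ (for the lemma we only need plain monotonicity, so the $\Sigma_f,\Sigma_g$ terms may be discarded from the left).

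First I would evaluate the four inner products with \eqref{df x} and \eqref{dg y}. Writing $\Delta x^{k+1}:=x^{k+1}-x^{k}$, $\Delta y^{k+1}:=y^{k+1}-y^{k}$ and $r^{k}:=Ax^{k}+By^{k}-c$, setting $x=x^{k}$ in \eqref{df x} produces $\langle\hat{\nabla}f(x^{k+1}),\Delta x^{k+1}\rangle$, while shifting \eqref{df x} to index $k-1$ and setting $x=x^{k+1}$ produces $\langle\hat{\nabla}f(x^{k}),\Delta x^{k+1}\rangle$; the analogous substitutions in \eqref{dg y} handle $g$. Subtracting within each pair and adding the two monotonicity inequalities, and using the dual update in Algorithm~\ref{alg1} in the form $z^{k+1}-z^{k}=-\tau\lambda\, r^{k+1}$ together with $A\Delta x^{k+1}+B\Delta y^{k+1}=r^{k+1}-r^{k}$, collapses everything to a single scalar inequality whose nonnegative right-hand side contains telescoping differences of the four quadratic terms in $\Phi^{k}$, one coupling term $\lambda\langle B(\Delta y^{k+1}-\Delta y^{k}),A\Delta x^{k+1}\rangle$, and residual cross-terms. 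The telescoping is extracted by the polarization identity $\langle a,a-b\rangle=\tfrac12(\|a\|^{2}-\|b\|^{2}+\|a-b\|^{2})$ applied termwise; the resulting squared-increment remainders such as $-\tfrac12\|\Delta x^{k+1}-\Delta x^{k}\|_{\mathcal{S}}^{2}$ are nonpositive and will be discarded. The coupling term I would split via $A\Delta x^{k+1}=(r^{k+1}-r^{k})-B\Delta y^{k+1}$, one piece completing the $\|B\Delta y\|^{2}$ telescoping and the other mixing the increments of $B\Delta y$ and of $z$ through $r^{k+1}-r^{k}=-\tfrac{1}{\tau\lambda}\bigl((z^{k+1}-z^{k})-(z^{k}-z^{k-1})\bigr)$.

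The hard part, and the only place where $\tau\le 1$ enters, will be to verify that all leftover second-order terms in the increments form a nonpositive quadratic form. After collecting, these terms are a quadratic form in $u:=B(\Delta y^{k+1}-\Delta y^{k})$ and $w:=(z^{k+1}-z^{k})-(z^{k}-z^{k-1})$, namely $-\tfrac{\lambda}{2}\|u\|^{2}-\tfrac{1}{\tau}\langle u,w\rangle+\tfrac{\tau-2}{2\tau^{2}\lambda}\|w\|^{2}$, with associated symmetric matrix $\bigl(\begin{smallmatrix}-\lambda/2 & -1/(2\tau)\\ -1/(2\tau) & (\tau-2)/(2\tau^{2}\lambda)\end{smallmatrix}\bigr)$. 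Its diagonal entries are nonpositive for $\tau\le 1<2$, and its determinant equals $\tfrac{1-\tau}{4\tau^{2}}\ge 0$, so the form is negative semidefinite precisely when $\tau\in(0,1]$, degenerating at $\tau=1$. Discarding this nonpositive block together with the squared-increment remainders leaves the telescoping part alone nonnegative, i.e. $\tfrac12(\Phi^{k}-\Phi^{k+1})\ge 0$, and multiplying by $2$ yields exactly \eqref{dandiao}. I expect the sign bookkeeping of the residual cross-terms and the negative-semidefiniteness check above to be the main obstacle; the rest is routine completion of squares.
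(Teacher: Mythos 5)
Your proposal is correct and follows essentially the same route as the paper's proof: monotonicity of the subdifferentials evaluated through \eqref{df x}--\eqref{dg y}, the dual-update identity \eqref{xtoy}, polarization to telescope the four quadratic terms while discarding the squared second-difference remainders, and a residual quadratic form in $u=B(y^{k+1}-2y^{k}+y^{k-1})$ and $w=z^{k+1}-2z^{k}+z^{k-1}$ that coincides exactly with the paper's $\tfrac{\tau-2}{2\tau^{2}\lambda}\|w\|^{2}-\tfrac{1}{\tau}\langle u,w\rangle-\tfrac{\lambda}{2}\|u\|^{2}$. The only cosmetic difference is the final certificate: you use the diagonal/determinant test (determinant $\tfrac{1-\tau}{4\tau^{2}}\ge 0$ for $\tau\in(0,1]$), while the paper completes the square as $\tfrac{\tau-2}{2\tau^{2}\lambda}\|w-\tfrac{\tau\lambda}{\tau-2}u\|^{2}-\tfrac{\lambda(\tau-1)}{2(\tau-2)}\|u\|^{2}\le 0$ --- the same verification in different clothing.
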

\begin{proof}
By utilizing the convexity of ~$f$~ and ~$g$, together with  \eqref{df x} and \eqref{dg y}, we can derive the following inequalities,
\begin{equation}
\label{ff x}
\begin{array}{lll}
0&\leq\langle \hat{\nabla} f(x^{k+1})-\hat{\nabla} f(x^{k}),x^{k+1}-x^{k}\rangle\\[1.5mm]
&=\langle z^{k+1}-z^{k},A(x^{k+1}-x^{k})\rangle+
\lambda(\tau-1)\langle Ax^{k+1}+By^{k+1}-Ax^{k}-By^{k},A(x^{k+1}-x^{k})
\rangle\\[1.5mm]
&\quad+\lambda\langle B(y^{k+1}-2y^{k}+y^{k-1}),A(x^{k+1}-x^{k})\rangle
-\langle x^{k+1}-2x^{k}+x^{k-1},\mathcal{S}
(x^{k+1}-x^{k})\rangle,
\end{array}
\end{equation}
\begin{equation}
\label{gg y}
\begin{array}{lll}
0&\leq\langle \hat{\nabla} g(y^{k+1})-\hat{\nabla} g(y^{k}),y^{k+1}-y^{k}\rangle\\[1.5mm]
&=\langle z^{k+1}-z^{k},B(y^{k+1}-y^{k})\rangle+\lambda(\tau-1)\langle Ax^{k+1}+By^{k+1}-Ax^{k}-By^{k},B(y^{k+1}-y)\rangle\\[1.5mm]
&\quad-\langle y^{k+1}-2y^{k}+y^{k-1},\mathcal{T}(y^{k+1}-y^{k})\rangle.
\end{array}
\end{equation}
From the iterative format of sPADMM, the following equation holds,
\begin{equation}
\label{xtoy}
\begin{array}{lll}
Ax^{k+1}+By^{k+1}-Ax^{k}-By^{k}=\frac{1}{\tau\lambda}(2z^{k}-z^{k+1}-z^{k-1}).
\end{array}
\end{equation}
Combining \eqref{ff x} and \eqref{gg y}, there is
\begin{equation}
\label{suo1}
\begin{array}{lll}
0&\leq\langle \hat{\nabla} f(x^{k+1})-\hat{\nabla} f(x^{k}),x^{k+1}-x^{k}\rangle+\langle \hat{\nabla} g(y^{k+1})-\hat{\nabla} g(y^{k}),y^{k+1}-y^{k}\rangle\\[1.5mm]
&=\langle 
z^{k+1}-z^{k},Ax^{k+1}+By^{k+1}-Ax^{k}-By^{k}\rangle+\lambda(\tau-1)
\|Ax^{k+1}+By^{k+1}-Ax^{k}\\[1.5mm]
&\quad-By^{k}\|^{2}+\lambda\langle B(y^{k+1}-2y^{k}+y^{k-1}),Ax^{k+1}-Ax^{k}\rangle\\[1.5mm]
&\quad-\langle x^{k+1}-2x^{k}+x^{k-1},\mathcal{S}(x^{k+1}-x^{k})
\rangle-\langle y^{k+1}-2y^{k}+y^{k-1},\mathcal{T}(y^{k+1}-y^{k})\rangle\\[1.5mm]
&=-\frac{1}{\tau\lambda}\langle z^{k+1}-z^{k},z^{k+1}-2z^{k}+z^{k-1}\rangle+\frac{\tau-1}
{\tau^{2}\lambda}\|z^{k+1}-2z^{k}+z^{k-1}\|^{2}\\[1.5mm]
&\quad+\lambda\langle B(y^{k+1}-2y^{k}+y^{k-1}),\frac{1}{\tau\lambda}
(2z^{k}-z^{k+1}-z^{k-1})-B(y^{k+1}-y^{k})\rangle\\[1.5mm]
&\quad
-\langle x^{k+1}-2x^{k}+x^{k-1},\mathcal{S}(x^{k+1}-x^{k})
\rangle-\langle y^{k+1}-2y^{k}+y^{k-1},\mathcal{T}(y^{k+1}-y^{k})\rangle\\[1.5mm]
&=\frac{1}{2\tau\lambda}(\|z^{k}-z^{k-1}\|^{2}-\|z^{k+1}-z^{k}\|^{2}
-\|z^{k+1}-2z^{k}+z^{k-1}\|^{2})
\\[1.5mm]
&\quad+\frac{\tau-1}
{\tau^{2}\lambda}\|z^{k+1}-2z^{k}+z^{k-1}\|^{2} -\frac{1}{\tau}\langle B(y^{k+1}-2y^{k}+y^{k-1}),z^{k+1}-2z^{k}+z^{k-1}\rangle \\[1.5mm]
&\quad+\frac{\lambda}{2}(\|B(y^{k}-y^{k-1})\|^{2}-\|B(y^{k+1}-y^{k})\|^{2}
-\|B(y^{k+1}-2y^{k}+y^{k-1})\|^{2})\\[1.5mm]
&\quad+\frac{1}{2}
(\|x^{k}-x^{k-1}\|^{2}_{\mathcal{S}}-\|x^{k+1}-x^{k}
\|^{2}_{\mathcal{S}}-\|x^{k+1}-2x^{k}+x^{k-1}\|^{2}_{\mathcal{S}})\\[1.5mm]
&\quad+\frac{1}{2}(
\|y^{k}-y^{k-1}\|^{2}_{\mathcal{T}}-\|y^{k+1}-y^{k}
\|^{2}_{\mathcal{T}}-\|y^{k+1}-2y^{k}+y^{k-1}\|^{2}_{\mathcal{T}})\\[1.5mm]
&\leq \frac{1}{2\tau\lambda}(\|z^{k}-z^{k-1}\|^{2}-\|z^{k+1}-z^{k}\|^{2})
+\frac{\lambda}{2}(\|B(y^{k}-y^{k-1})\|^{2}-\|B(y^{k+1}-y^{k})\|^{2})\\[1.5mm]
&\quad +\frac{1}{2}(\|x^{k}-x^{k-1}\|^{2}_{\mathcal{S}}-\|x^{k+1}-x^{k}
\|^{2}_{\mathcal{S}})+\frac{1}{2}(
\|y^{k}-y^{k-1}\|^{2}_{\mathcal{T}}-\|y^{k+1}-y^{k}
\|^{2}_{\mathcal{T}}),
\end{array}
\end{equation}
where the second equation converts $z^{k}$ to $z^{k+1}$ and uses the equation \eqref{xtoy}. 
For a detailed description of the last inequality above, we let $a^{k}:=z^{k+1}-2z^{k}+z^{k-1}$ and $b^{k}:= B(y^{k+1}-2y^{k}+y^{k-1})$. Thus for $\tau\in(0,1]$, it holds
\begin{equation*}
\begin{array}{lll}
&~-\frac{1}{2\tau\lambda}\|a^{k}\|^{2}+\frac{\tau-1}
{\tau^{2}\lambda}\|a^{k}\|^{2}-\frac{1}{\tau}\langle a^{k},b^{k}\rangle-\frac{\lambda}{2}\|b^{k}\|^{2}\notag\\[1.5mm]
= &~~\frac{\tau-2}{2\tau^{2}\lambda}\|a^{k}-\frac{\tau\lambda}{\tau-2}
b^{k}\|^{2}-\frac{\lambda(\tau-1)}{2(\tau-2)}\|b^{k}\|^{2}\leq ~0.
\end{array}
\end{equation*}
According to \eqref{suo1}, the lemma is proved.
\qed
\end{proof}
Now we are ready to present the proof of Theorem \ref{thm1}.
\begin{proof}[Proof of Theorem \ref{thm1}]
We separate our proof to the following two parts. 
\begin{itemize}
\item [1)]
When $\tau=1$, combining \eqref{df x} and \eqref{dg y}, 
we can obtain 
\begin{equation}
\label{yuan1}
\begin{array}{lll}
0& \leq f(x^{k+1})+g(y^{k+1})-f(x^{*})-g(y^{*})+\langle z^{*},c-Ax^{k+1}-By^{k+1}\rangle\\[1.5mm]
&\leq \langle \hat{\nabla} f(x^{k+1}),x^{k+1}-x^{*}\rangle +\langle\hat{\nabla} g(y^{k+1}), y^{k+1}-y^{*}\rangle +\langle z^{*},c-Ax^{k+1}-By^{k+1}\rangle\\[1.5mm]
&=-\frac{1}{\lambda}\langle z^{k+1}-z^{*},z^{k+1}-z^{k}\rangle +\langle B(y^{k+1}-y^{k}),z^{k}-z^{k+1}\rangle \\[1.5mm]
&\quad+\lambda \langle B(y^{k+1}- y^{k}), B(y^{*}-y^{k+1})\rangle-\langle y^{k+1}-y^{k},\mathcal{T}(y^{k+1}-y^{*})\rangle\\[1.5mm]
&\quad-\langle x^{k+1}-x^{k},\mathcal{S}(x^{k+1}-x^{*})\rangle\\[1.5mm]
&\leq -\frac{1}{\lambda}\langle z^{k+1}-z^{*},z^{k+1}-z^{k}\rangle -\langle y^{k+1}-2y^{k}+y^{k-1},\mathcal{T}(y^{k+1}-y^{k})\rangle \\[1.5mm]
&\quad+\lambda \langle B(y^{k+1}-y^{k}),~B(y^{*}-y^{k+1})\rangle-\langle y^{k+1}-y^{k},\mathcal{T}(y^{k+1}-y^{*})\rangle\\[1.5mm]
&\quad-\langle x^{k+1}-x^{k},\mathcal{S}(x^{k+1}-x^{*})\rangle\\[1.5mm]
&\leq \frac{1}{2\lambda}(\|z^{k}-z^{*}\|^{2}-\|z^{k+1}-z^{*}\|^{2}-
\|z^{k}-z^{k+1}\|^{2})
\\[1.5mm]
&\quad+\frac{\lambda}{2}(\|B(y^{k}-y^{*})\|^{2}-
\|B(y^{k+1}-y^{*})\|^{2}-\|B(y^{k+1}-y^{k})\|^{2})
\\[1.5mm]
&\quad+\frac{1}{2}(\|x^{k}-x^{*}\|_{\mathcal{S}}
^{2}-\|x^{k+1}-x^{*}\|_{\mathcal{S}}^{2}-\|x^{k+1}-x^{k}\|_{\mathcal{S}}^{2})
\\[1.5mm]
&\quad+\frac{1}{2}(\|y^{k}-y^{*}\|_{\mathcal{T}}^{2}-\|y^{k+1}-y^{*}\|_{\mathcal{T}}^{2}-\|y^{k}-y^{k+1}\|_{\mathcal{T}}^{2}+\|y^{k-1}-y^{k}\|_{\mathcal{T}}^{2}\\[1.5mm]
&\quad-\|y^{k}-y^{k+1}\|_{\mathcal{T}}^{2}-\|y^{k+1}-2y^{k}+y^{k+1}\|^{2}_{\mathcal{T}}),
\end{array}
\end{equation}
where the third inequality uses \eqref{gg y}.
Consequently, it holds that
\begin{equation}
\label{sum1}
\begin{array}{lll}
&\frac{1}{\lambda}\|z^{k}-z^{k+1}\|^{2}+\lambda\|B(y^{k+1}-y^{k})\|^{2}+
\|x^{k+1}-x^{k}\|_{\mathcal{S}}^{2}+\|y^{k+1}-y^{k}\|_{\mathcal{T}}^{2}\\[1.5mm]
\leq &\frac{1}{\lambda}(\|z^{k}-z^{*}\|^{2}-\|z^{k+1}-z^{*}\|^{2})+
\lambda(\|B(y^{k}-y^{*})\|^{2}-\|B(y^{k+1}-y^{*})\|^{2})\\[1.5mm]
\quad&+(\|x^{k}-x^{*}\|_{\mathcal{S}}^{2}-\|x^{k+1}-x^{*}\|_{\mathcal{S}}^{2})+
(\|y^{k}-y^{*}\|_{\mathcal{T}}^{2}-\|y^{k+1}-y^{*}\|_{\mathcal{T}}^{2}
\\[1.5mm]
\quad&+\|y^{k-1}-y^{k}\|^{2}_{\mathcal{T}}-\|y^{k+1}-y^{k}\|_{\mathcal{T}}^{2}).
\end{array}
\end{equation}
Summing \eqref{sum1} from $k=1$ to $K$ and using\eqref{dandiao}, one can get
\begin{equation}
\label{suo11}
\begin{array}{lll}
& K(\frac{1}{\lambda}\|z^{K}-z^{K+1}\|^{2}+\lambda\|B(y^{K+1}
-y^{K})\|^{2}+\|x^{K+1}-x^{K}\|_{\mathcal{S}}^{2}+
\|y^{K+1}-y^{K}\|_{\mathcal{T}}^{2})\\[1.5mm]
\leq &\sum_{k=1}^{K}(\frac{1}{\lambda}\|z^{k}-z^{k+1}\|^{2}+
\lambda\|B(y^{k+1}-y^{k})\|^{2}+\|x^{k+1}-x^{k}\|_{\mathcal{S}}^{2}+
\|y^{k+1}-y^{k}\|_{\mathcal{T}}^{2})\\[1.5mm]
\leq &
C_{1}:=\frac{1}{\lambda}\|z^{1}-z^{*}\|^{2}+\lambda\|B(y^{1}-y^{*})\|^{2}+
\|x^{1}-x^{*}\|_{\mathcal{S}}^{2}+\|y^{1}-y^{*}\|_{\mathcal{T}}^{2}+
\|y^{0}-y^{1}\|_{\mathcal{T}}^{2}
\end{array}
\end{equation}
\item [2)]
When $\tau\in(0,1)$, we let $\hat{a}^{k}=z^{k+1}-z^{k},~\hat{b}^{k}=B(y^{k+1}-y^{k}),~t=\frac
{\tau-1-\sqrt{\tau^{2}-\tau+1}}{2\tau^{2}\lambda}$ and $m=2\tau\lambda t-\frac{\tau-2}{\tau}$. Then, one can get
\begin{equation*}
\begin{array}{lll}
&0<m<1,\quad \frac{\tau-2}{2\tau^{2}\lambda}<t<0,\quad 0<\frac
{1}{4t\tau^{2}}+\frac{\lambda}{2}<\frac{\lambda}{2},\\[1.5mm]
t-\frac{\tau-2}{2\tau^{2}\lambda}&<\frac{1}{2\tau\lambda} \quad\text{~and~}\quad (t-\frac{\tau-2}{2\tau\lambda})/(\frac
{1}{2\tau\lambda})=(\frac
{1}{4t\tau^{2}}+\frac{\lambda}{2})/(\frac{\lambda}{2})=m.
\end{array}
\end{equation*}
It is easy to verify that 
\begin{equation}
\label{suo4}
\begin{array}{lll}
&-\frac{1}{2\tau\lambda}\|\hat{a}^{k}\|^{2}-\frac{\lambda}
{2}\|\hat{b}^{k}\|^{2}-\frac{1}{\tau}\langle\hat{a}^{k},\hat{b}^{k}
\rangle+\frac{\tau-1}{\tau^{2}\lambda}\|\hat{a}^{k}\|^{2}\\[1.5mm]
= & t\|\hat{a}^{k}-\frac{1}{2t\tau}\hat{b}^{k}\|^{2}-
(t-\frac{\tau-2}{2\tau\lambda})\|\hat{a}^{k}\|^{2}-(
\frac{1}{4t\tau^{2}}+\frac{\lambda}{2})\|\hat{b}^{k}\|^{2}.
\end{array}
\end{equation}
Since $f(\cdot)$ and $g(\cdot)$ are convex functions,  using \eqref{df x} and \eqref{dg y} we have 
\begin{equation}
\label{tutu}
\begin{array}{lll}
0& \leq f(x^{k+1})+g(y^{k+1})-f(x^{*})-g(y^{*})+\langle z^{*},c-Ax^{k+1}-By^{k+1}\rangle\\[1.5mm]
&\leq \langle \hat{\nabla} f(x^{k+1}),x^{k+1}-x^{*}\rangle +\langle\hat{\nabla} g(y^{k+1}), y^{k+1}-y^{*}\rangle +\langle z^{*},c-Ax^{k+1}-By^{k+1}\rangle\\[1.5mm]
&=-\frac{1}{\tau\lambda}\langle z^{k+1}-z^{*},z^{k+1}-z^{k}\rangle -\frac{1}{\tau}\langle \hat{a}^{k},\hat{b}^{k}\rangle +\lambda \langle B(y^{k+1}- y^{k}),B(y^{*}-y^{k+1})\rangle\\[1.5mm]
&\quad -\langle y^{k+1}-y^{k},\mathcal{T}(y^{k+1}-y^{*})\rangle-\langle x^{k+1}-x^{k},\mathcal{S}(x^{k+1}-x^{*})\rangle
+\frac{\tau-1}{\tau^{2}\lambda}\|\hat{a}^{k}\|^{2}.
\end{array}
\end{equation}

Combining \eqref{tutu} and \eqref{suo4}, one can see that 
\begin{equation}
\label{suo3}
\begin{array}{lll}
0& \leq f(x^{k+1})+g(y^{k+1})-f(x^{*})-g(y^{*})+\langle z^{*},c-Ax^{k+1}-By^{k+1}\rangle\\[1.5mm]
&=-\frac{1}{\tau\lambda}\langle z^{k+1}-z^{*},z^{k+1}-z^{k}\rangle -\frac{1}{\tau}\langle \hat{a}^{k},\hat{b}^{k}\rangle +\lambda \langle B(y^{k+1}- y^{k}),B(y^{*}-y^{k+1})\rangle\\[1.5mm]
&\quad -\langle y^{k+1}-y^{k},\mathcal{T}(y^{k+1}-y^{*})\rangle-\langle x^{k+1}-x^{k},\mathcal{S}(x^{k+1}-x^{*})\rangle
+\frac{\tau-1}{\tau^{2}\lambda}\|\hat{a}^{k}\|^{2}\\[1.5mm]
&= \frac{1}{2\tau\lambda}(\|z^{k}-z^{*}\|^{2}-\|z^{k+1}-
z^{*}\|^{2}-\|\hat{a}^{k}\|^{2})+\frac{\lambda}{2}(\|B(y^{k}-
y^{*})\|^{2}-\|B(y^{k+1}-y^{*})\|^{2}\\[1.5mm]
&\quad-\|\hat{b}^{k}\|^{2})+\frac{1}{2}(\|x^{k}-x^{*}\|_{\mathcal{S}}^{2}-\|x^{k+1}-x^{*}\|_{\mathcal{S}}^{2}-\|x^{k+1}-x^{k}\|
_{\mathcal{S}}^{2})+\frac{1}{2}(\|y^{k}-y^{*}\|_{\mathcal{T}}^{2}\\[1.5mm]
&\quad-\|y^{k+1}-y^{*}\|_{\mathcal{T}}^{2}-
\|y^{k}-y^{k+1}\|_{\mathcal{T}}^{2})
 -\frac{1}{\tau}\langle\hat{a}^{k},\hat{b}^{k}
\rangle+\frac{\tau-1}{\tau^{2}\lambda}\|\hat{a}^{k}\|^{2}\\[1.5mm]
&\leq  \frac{1}{2\tau\lambda}(\|z^{k}-z^{*}\|^{2}-\|z^{k+1}-z^{*}\|^{2})+\frac{\lambda}{2}(\|B(y^{k}-y^{*})\|^{2}-\|B(y^{k+1}-y^{*})\|^{2})\\[1.5mm]
&\quad+\frac{1}{2}(\|x^{k}-x^{*}\|_{\mathcal{S}}^{2}-\|x^{k+1}-x^{*}\|_{\mathcal{S}}^{2}-\|x^{k+1}-x^{k}\|_{\mathcal{S}}^{2})
-(t-\frac{\tau-2}{2\tau\lambda})\|\hat{a}^{k}\|^{2}
\\[1.5mm]
&\quad+\frac{1}{2}(\|y^{k}-y^{*}\|_{\mathcal{T}}^{2}
-\|y^{k+1}-y^{*}\|_{\mathcal{T}}^{2}-
\|y^{k}-y^{k+1}\|_{\mathcal{T}}^{2})
-(\frac{1}{4t\tau^{2}}+\frac{\lambda}{2})\|\hat{b}^{k}\|^{2}.
\end{array}
\end{equation}
Thus we can obtain that
\begin{equation}
\label{suo5}
\begin{array}{lll}
&m(\frac{1}{\tau\lambda}\|z^{k}-z^{k+1}\|^{2}+\lambda\|B(y^{k+1}-y^{k})\|^{2}+\|x^{k+1}-x^{k}\|_{\mathcal{S}}^{2}+\|y^{k+1}-y^{k}\|_{\mathcal{T}}^{2})\\[1.5mm]
\leq & m(\frac{1}{\tau\lambda}\|z^{k}-
z^{k+1}\|^{2}+\lambda\|B(y^{k+1}-y^{k})\|^{2})+\|x^{k+1}-x^{k}\|_{\mathcal{S}}^{2}+\|y^{k+1}-y^{k}\|_{\mathcal{T}}^{2}\\[1.5mm]
\leq &\frac{1}{\tau\lambda}(\|z^{k}-z^{*}\|^{2}-\|z^{k+1}-z^{*}\|^{2})+
\lambda(\|B(y^{k}-y^{*})\|^{2}-\|B(y^{k+1}-y^{*})\|^{2})\\[1.5mm]
\quad&+(\|x^{k}-x^{*}\|_{\mathcal{S}}^{2}-\|x^{k+1}-x^{*}\|_{\mathcal{S}}^{2})+
(\|y^{k}-y^{*}\|_{\mathcal{T}}^{2}-\|y^{k+1}-y^{*}\|_{\mathcal{T}}^{2}).
\end{array}
\end{equation}
Summing \eqref{suo5} from $k=1$ to $K$, and making use of \eqref{dandiao}, one has 
\begin{equation}
\label{fangsuo1}
\begin{array}{lll}
& K(\frac{1}{\tau\lambda}\|z^{K}-z^{K+1}\|^{2}+\lambda\|B(y^{K+1}
-y^{K})\|^{2}+\|x^{K+1}-x^{K}\|_{\mathcal{S}}^{2}+
\|y^{K+1}-y^{K}\|_{\mathcal{T}}^{2})\\[1.5mm]
\leq &\sum_{k=1}^{K}(\frac{1}{\tau\lambda}\|z^{k}-z^{k+1}\|^{2}+
\lambda\|B(y^{k+1}-y^{k})\|^{2}+\|x^{k+1}-x^{k}\|_{\mathcal{S}}^{2}+
\|y^{k+1}-y^{k}\|_{\mathcal{T}}^{2})\\[1.5mm]
\leq &C_{2}:=\frac{1}{m}(\frac{1}{\tau\lambda}\|z^{1}-z^{*}\|^{2}+\lambda\|B(y^{1}-y^{*})\|^{2}+\|x^{1}-x^{*}\|_{\mathcal{S}}^{2}+\|y^{1}-y^{*}\|_{\mathcal{T}}^{2}).
\end{array}
\end{equation}
\end{itemize}
Finally, we discuss $\tau\in (0,1]$ uniformly.
Let $C:=\max\{C_{1},C_{2}\}$. Then, on the one hand, using a simple reduction of \eqref{suo11} and \eqref{fangsuo1}, for $K=1,2,\cdots$, one has 
\begin{equation}
\label{fangsuo2}
\begin{array}{rll}
\tau\lambda \|Ax^{K+1}+By^{K+1}-c\|&=\|z^{K+1}-z^{K}\|\leq \sqrt{\frac{\tau\lambda C}{K}},\\[1.5mm]
\|B(y^{K+1}-y^{K})\|\leq \sqrt{\frac{C}{\lambda K}},\quad \|y^{K+1}-&y^{K}\|_{\mathcal{T}}\leq \sqrt{\frac{C}{K}},~~\text{and}~~\|x^{K+1}-x^{K}\|_{\mathcal{S}}\leq \sqrt{\frac{C}{K}}.
\end{array}
\end{equation}
On the other hand, using \eqref{yuan1} and \eqref{suo3} we have
\begin{equation*}
\begin{array}{rll}
&\frac{1}{\tau\lambda}\|z^{k+1}-z^{*}\|^{2}+\lambda\|B(y^{k+1}-y^{*})\|^{2}+
\|x^{k+1}-x^{*}\|_{\mathcal{S}}^{2}+
\|y^{k+1}-y^{*}\|_{\mathcal{T}}^{2}\\[1.5mm]
\leq &\frac{1}{\tau\lambda}\|z^{k}-z^{*}\|^{2}+\lambda\|B(y^{k}-y^{*})\|^{2}+
\|x^{k}-x^{*}\|_{\mathcal{S}}^{2}+
\|y^{k}-y^{*}\|_{\mathcal{T}}^{2}\\[1.5mm]
\leq & \frac{1}{\tau\lambda}\|z^{1}-z^{*}\|^{2}+\lambda\|B(y^{1}-y^{*})\|^{2}+
\|x^{1}-x^{*}\|_{\mathcal{S}}^{2}+
\|y^{1}-y^{*}\|_{\mathcal{T}}^{2}\leq C.
\end{array}
\end{equation*}
Thus we obtain
\begin{equation}
\label{fangsuo3}
\begin{array}{rll}
\|z^{k+1}-z^{*}\|\leq \sqrt{\tau\lambda C},&\quad \|B(y^{k+1}-y^{*})\|\leq \sqrt{\frac{C}{\lambda}},\\[1.5mm]
\|x^{k}-x^{*}\|_{\mathcal{S}}\leq \sqrt{C},&~~\text{and}~~ \|y^{k}-y^{*}\|_{\mathcal{T}}\leq \sqrt{C}.
\end{array}
\end{equation}
By utilizing the contractions in \eqref{tutu}, \eqref{fangsuo2} and \eqref{fangsuo3}, we can get
\begin{equation}
\label{result11}
\begin{array}{rll}
&~~~ f(x^{K+1})-f(x^{*})+g(y^{K+1})-g(y^{*})-\langle z^{*},Ax^{K+1}+By^{K+1}-c\rangle
\leq \frac{4C}{\sqrt{K}}+\frac{C}{K\sqrt{\tau}}.
\end{array}
\end{equation}
Moreover, using \eqref{result11} and the fact that $ \|Ax^{K+1}+By^{K+1}-c\| \leq \sqrt{\frac{C}{\tau\lambda K}},$ one can see that for all $K=1,2\cdots$, it holds htat
\begin{equation*}
\begin{array}{rll}
-\|z^{*}\|\sqrt{\frac{C}{\tau\lambda K}}\leq f(x^{K+1})-f(x^{*})+g(y^{K+1})-g(y^{*})\leq \|z^{*}\|\sqrt{\frac{C}{\tau\lambda K}}+\frac{4C}{\sqrt{K}}+\frac{C}{K\sqrt{\tau}}.
\end{array}
\end{equation*}
This completes the proof. 
\qed


\end{proof}

\end{appendix}

\end{document}